\newtheorem{thm}{Theorem}%  meant for continuous numbers
\newtheorem{prop}[thm]{Proposition}% 
\newtheorem{lem}[thm]{Lemma}
\newtheorem{cor}[thm]{Corollary}
\theoremstyle{remark}
\newtheorem{rem}{Remark}%
\newtheorem{defi}{Definition}%
\newtheorem{hyp}{Hypothesis}
\newcommand{\D}{\mathcal{D}}
\newcommand{\E}{\mathbb{E}}
\newcommand{\F}{\mathcal{F}}
\newcommand{\Pro}{\mathbb{P}}
\newcommand{\R}{\mathbb{R}}
\newcommand{\dst}{\displaystyle}
\newcommand{\eps}{\varepsilon}
\newcommand{\Var}{\textrm{Var}}
\newcommand{\Cov}{\textrm{Cov}}
\newcommand{\vsd}{\vspace{0.2cm}}
\newcommand{\norme}[1]{\left\Vert #1\right\Vert}
\newcommand{\intInter}[1]{[\![ #1 ]\!]}
\definecolor{color1}{rgb}{0.22,0.45,0.70}% light blue
\definecolor{color2}{rgb}{0.9, 0.17, 0.31}
\definecolor{color3}{rgb}{0.8, 0.25, 0.33}
\definecolor{blue2}{rgb}{0.2,0.2,0.7}
\definecolor{green2}{rgb}{0.13, 0.7, 0.67}
\definecolor{green3}{rgb}{0.3,0.6,0.4}
\definecolor{mygray}{gray}{0.40}
\definecolor{mylightgray}{gray}{0.70}
\newcommand{\bl}[1]{\textcolor{color1}{#1}}
\newcommand{\re}[1]{\textcolor{color3}{#1}}
\begin{document}

\title[Article Title]{Scattered wavefield in the stochastic homogenization regime}

%%=============================================================%%
%% Prefix	-> \pfx{Dr}
%% GivenName	-> \fnm{Joergen W.}
%% Particle	-> \spfx{van der} -> surname prefix
%% FamilyName	-> \sur{Ploeg}
%% Suffix	-> \sfx{IV}
%% NatureName	-> \tanm{Poet Laureate} -> Title after name
%% Degrees	-> \dgr{MSc, PhD}
%% \author*[1,2]{\pfx{Dr} \fnm{Joergen W.} \spfx{van der} \sur{Ploeg} \sfx{IV} \tanm{Poet Laureate} 
%%                 \dgr{MSc, PhD}}\email{iauthor@gmail.com}
%%=============================================================%%

\author[1]{\fnm{Garnier} \sur{Josselin}}
\email{josselin.garnier@polytechnique.edu}

\author[2]{\fnm{Giovangigli} \sur{Laure}}\email{laure.giovangigli@ensta-paris.fr}
% \equalcont{These authors contributed equally to this work.}

\author*[1,2]{\fnm{Goepfert} \sur{Quentin}}\email{quentin.goepfert@ensta-paris.fr}
% \equalcont{These authors contributed equally to this work.}

\author[3]{\fnm{Millien} \sur{Pierre}}\email{pierre.millien@espci.fr}
% \equalcont{These authors contributed equally to this work.}

\affil[1]{\orgdiv{CMAP}, \orgname{CNRS, Ecole polytechnique, Institut Polytechnique de Paris}, \orgaddress{\city{91120 Palaiseau}, \country{France}}}
\affil[2]{\orgdiv{POEMS}, \orgname{CNRS, Inria, ENSTA Paris, Institut Polytechnique de Paris}, \orgaddress{\city{91120 Palaiseau}, \country{France}}}
\affil[3]{\orgdiv{Institut Langevin, ESPCI Paris, PSL University, CNRS}, 
%\orgname{Organization}, 
\orgaddress{1 rue Jussieu, F-75005 \city{Paris}, \country{France}}}

%%==================================%%
%% sample for unstructured abstract %%
%%==================================%%

\abstract{ 
In the context of providing a mathematical framework for the propagation of ultrasound waves in a random multiscale medium, we consider the scattering of classical waves (modeled by a divergence form scalar Helmholtz equation) by a bounded object with a random composite micro-structure embedded in an unbounded homogeneous background medium. Using \emph{quantitative}  stochastic homogenization techniques, we provide asymptotic expansions of the scattered field in the background medium with respect to a scaling parameter describing the spatial random oscillations of the micro-structure. Introducing a boundary layer corrector to compensate the breakdown of stationarity assumptions at the boundary of the scattering medium, we prove quantitative $L^2$- and $H^1$- error estimates for the asymptotic first-order expansion. The theoretical results are supported by numerical experiments. 
}
% Recent progress in quantitative speed of sound in ultrasound imaging has been made. Though, this progress faces theoretical limitations due to the modelling of the wave propagation by single scattering only.
%Since the measured scattered wave field is caused by unresolved scatterers, we decided to consider the propagation of an acoustic wave field in a bounded medium embedded in the free space in which lie small (in front of the incoming wave length) randomly displaced inhomogeneities.
%We thus derive an asymptotic expansion of the scattered wave field using quantitative   stochastic homogenization techniques.
%The boundedness of the medium forces us to introduce boundary layer correctors that we estimate to obtain an order 1 $L^2$ and $H^1$ estimate of the two-scale error. 
%Using an integral representation of the scattered field that we couple with previous works on the stochastic homogenization commutator, we obtain a $(d+2)/1$ order estimate in $L^2$ and $H^1$ norms (where $d$ is the dimension). 
%We finally illustrate the theoretical work with numerical simulations.

\keywords{Helmholtz equation, quantitative stochastic homogenization, transmission problem, boundary layer}

\maketitle

\section{Introduction and context}

The emergence of quantitative medical imaging techniques that can map the numerical value of a physical parameter in a biological tissue constitutes  a major shift of paradigm for the theory of inverse problems. Imaging modalities are now expected not only to produce images that are anatomically accurate (structural images)  but also stably and quantitatively reconstruct parameters of interest that can help discriminate pathological states.

Medical ultrasound imaging is a powerful, safe, portable and cheap imaging modality that is used in countless physical exams.  Ultrasonic pulses (in the MHz range) are transmitted into the region of interest and the images are obtained by numerically backpropagating the echoes generated by the tissues and recorded on a receiver array. Each tissue and its pathological state will be characterized by a distinct type of speckle on the image.  

The technique relies on the fact that most soft tissues have a mass density and compressibility close to those of water (and ultrasonic waves travel in these tissues almost as in water) yet have echogenic properties that  can be explained by the presence of acoustic heterogeneities of characteristic size much smaller than the wavelength, see \cite{shung1992ultrasonic}.
 
The quantification of these echogenic properties  (known in the literature as \emph{backscattering coefficient estimation} \cite{ueda1985spectral})  relied until now on the introduction of an \emph{ultrasonic reflectivity}  \cite{norton1981ultrasonic} and approximations of the scattered field derived under a set of restrictive hypotheses that do not hold in many practical situations (usually assumptions of the low scatterer concentration, single scattering regime, strictly homogeneous mass density in the medium, uniformity of the excitation beam \ldots \cite{mamou2013quantitative}). Recently, using a formal approach based on a separation of scale in the scattering process, Aubry $\&$  al. have recently obtained spectacular results in quantitative speed of sound imaging on experimental data \cite{lambert2020reflection}.

In this paper, we aim at providing a mathematical framework for the propagation of ultrasound waves in random multiscale media. Using the tools of \emph{stochastic homogenization}, we provide a mathematical model for the acoustic properties of a soft tissue as well as \emph{quantitative} asymptotic expansions of the scattered field with respect to the scale of the acoustic heterogeneities in the medium.

\subsection{State of the art}

Homogenization techniques are an essential tool to address the study of partial differential equations with rapidly oscillating coefficients that exhibit periodic \cite{bensoussan2011asymptotic} or stochastic  \cite{papanicolaou1979boundary,jikov2012homogenization} variations, allowing to derive effective coefficients or asymptotic expansions for the solutions.

In the case of stochastic homogenization, the recent quantification of convergence rates in the case of the Poisson equation in unbounded domains, obtained independently by  Armstrong et al \cite{armstrong2017additive} as well as Gloria and Otto \cite{gloria2015quantification,gloria2015corrector} has initiated a leap in results on the subject, relaxing some of the hypotheses of the aforementioned papers.  Using multiscale inequalities to quantify ergodicity,  quantitative convergence rates were obtained for correlated coefficients with long range correlations \cite{gloria2021quantitative,duerinckx2020multiscale} or in the case of bounded domains \cite{armstrong2019quantitative,josien2021quantitative}. 
Additionally, the emergence of a theory of fluctuations in stochastic homogenization has lead to introducing a new quantity : the homogenization commutator.  In the series of articles \cite{duerinckx2020robustness,duerinckx2020structure} the authors have shown that the fluctuations of the two-scale expansion error of the \emph{so-called} commutator characterizes the fluctuations of all the observables of interest (flux, gradient\ldots).

Classical wave scattering by a medium containing periodically distributed penetrable objects has attracted a lot of attention in the recent years \cite{cakoni2016homogenization,chaumont2023scattering, vinoles2016problemes} with a particular focus on the asymptotic analysis of the boundary corrector \cite{cakoni2019scattering} and the construction of effective transmission conditions \cite{beneteau2021modeles}.  We also refer to  \cite{allaire1999boundary,gerard2012homogenization,prange2013asymptotic} for major contributions to the study of the boundary layers for the Poisson equation.

\subsection{Main contribution}
 In this paper,  we are interested in the scattering of classical waves by a bounded object with a random composite micro-structure embedded in an unbounded homogeneous background medium.  The problem considered is modeled by a divergence form scalar Helmholtz equation with discontinuous rapidly oscillating (at some scale $\varepsilon$ much smaller than the wavelength)  stochastic coefficients.

Building on the methods developed in \cite{gloria2014regularity,gloria2021quantitative} we  establish a first-order (with respect to the parameter $\varepsilon$)  asymptotic expansion of the scattered field inside the object (proposition \ref{prop:homog}).  Introducing a boundary layer corrector to enforce transmission conditions at the boundary of the object we prove $L^2$- and $H^1$-norm convergence rates (proposition \ref{prop:2scErrDecayBC}).   Using the Lippman-Schwinger equation and results on fluctuations of the commutator \cite{duerinckx2020robustness}, we derive a quantitative first-order expansion of the scattered wave outside the object.  We also present numerical illustrations of the solution of the multiscale problem as well as the correctors, and the first-order expansion of the solution. Numerical convergence rates are computed to support the theoretical claims.  

The article is organized as follows:
\begin{itemize}
\item In section \ref{section:model} we present the model for the propagation medium and the stochastic framework required to prove stochastic homogenization results.
\item Section \ref{section:2scErr} is devoted to proving \emph{$L^2$- and $H^1$- quantitative estimates} of the error between the solution of the original problem and the first-order two-scale expansion (proposition \ref{prop:2scErrDecay}).
\item Using the expansions of the solution and its gradient inside the composite medium established in the previous section in conjonction with the Lippman-Schwinger equation satisfied by the scattered field eq \eqref{eq:LSEQ}, we derive in section \ref{section:IRfield} an explicit integral representation formula for an $H^1$- approximation of the scattered field outside the composite medium of order $(d+1)/2$ (Theorem \ref{thm:Rdecay}), where $d$ is the dimension. This theorem along with Corollary~\ref{cor:R1decay} is the main result of the paper. It makes it possible to relate the small-scale fluctuations of the composite medium and the scattered wavefield that can be measured outside the medium. This paves the way towards the resolution of quantitative inverse problems that aim at characterizing the statistics of the composite medium from the statistics of the scattered field.
\item In section \ref{section:numericalSimul}, we show numerical results on the original problem, the effective coefficients and the homogenized problem,  as well as the different correctors.  
We compute the different norm errors between the solution of the original problem and its various approximations to confirm the claims of  proposition \ref{prop:2scErrDecayL2} and theorem \ref{thm:Rdecay}.

\end{itemize}

\newpage
\tableofcontents
\newpage

\section{Presentation of the model} \label{section:model}

We consider a bounded acoustic medium $D \subset \R^d$, $d \in  [\![1,3]\!]$ with a $\mathcal{C}^4$- boundary $\partial D$ and we study the scattering of a time-harmonic plane wave 
\begin{equation}
u^{inc}(x) := exp(i k \theta \cdot x)~\textrm{ for } x\in\R^d
\end{equation} with wave number $k$ and direction $\theta \in \mathbb{S}$.  We assume that a set $S^{\eps}$ of randomly distributed inclusions of characteristic size $\eps > 0$ lies inside the medium $D$. $\eps$ is small compared to the wavelength of the incoming field $2 \pi k^{-1}$. 

\begin{figure} 
\centering
\input{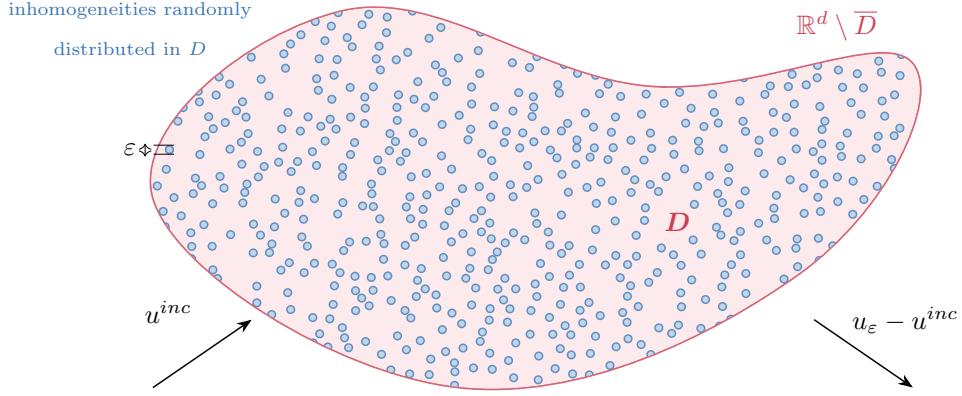}
\caption{Scattering by an obstacle in the stochastic homogenization regime}
\label{fig:schema}
\end{figure}

The outer medium $\R^d \setminus \overline{D}$, the background $D \setminus \overline{S^\eps}$ and the scatterers $S^\eps$ are assumed to be homogeneous with respective parameters $(Id, n_0)$, $(a_M, n_M)$ and $(a_S, n_S)$. The medium parameters are then given by
\begin{equation}
\left\{\begin{array}{l}
\dst \vsd a_\eps := Id \mathbbm{1}_{\R^d \setminus \overline{D}} + a_M\mathbbm{1}_{D\setminus\bar{S^\eps}} + a_S\mathbbm{1}_{S^\eps},\\
\dst \vsd n_\eps := n_0 \mathbbm{1}_{\R^d \setminus \overline{D}} + n_M\mathbbm{1}_{D\setminus\bar{S^\eps}} + n_S\mathbbm{1}_{S^\eps},
\end{array}\right.
\end{equation}
where $a_M$ and $a_S$ are positive definite matrices of $\mathcal{M}_d(\R)$ and $n_0$, $n_M$, $n_S$ are positive. \\
The total field $u_\eps$ is then the unique solution a.s. in $H^1_{loc}(\R^d)$ of the following problem:
\begin{equation}
\left\{\begin{aligned}
& -\nabla\cdot\left(a_\eps(x) \nabla u_\eps(x) \right) - k^2 n_\eps(x) u_\eps(x) = 0 && \textrm{for }x \in \R^d, \\
& \lim_{|x|\to +\infty}|x|^{\frac{d-1}{2}}\left(\frac{\partial (u_\eps - u^{inc})}{\partial |x|}(x)-ik~(u_\eps - u^{inc})(x)\right)=0.
\end{aligned} \right.
\end{equation}

\begin{rem}
In the context of acoustics $u_\eps$ is the pressure. $a_\eps$ and $n_\eps$ relate to the mass density and the bulk modulus of the inner and outer media \cite[Chapter~3.3]{pierce2007basic}. We choose identical parameters for all scatterers. The study can easily be extended to independent and identically distributed parameters as long as the assumptions of uniform ellipticity for $a_\eps$ and uniform boundness from below and above for $n_\eps$ are satisfied.
\end{rem}

We will derive an asymptotic expansion of $u_\eps(x)$ with respect to $\eps$ for $x \in \R^d \setminus \overline{D}$ using quantitative stochastic homogenization techniques. Before doing so, let us specify in this section the different assumptions that we make on the random distribution of scatterers. 

\subsection{Description of the distribution of scatterers} \label{subsec:scatterers}
Let $(x_i)_{i \in \mathbb{N}}$ be the point process in $\R^d$ corresponding to the centers of the scatterers. A scatterer $s_i$, $i \in \mathbb{N}$ centered at $x_i$ consists in an open connected Lipschitz domain $\mathcal{O}$ of radius $r := \underset{x,y \in \mathcal{O}}{\max}\vert x-y \vert$. We denote by $S := \underset{i \in \mathbb{N}}{\cup} s_i$ the set of scatterers of radius $r=1$ in $\R^d$. Let $(\Omega, \F, \Pro)$ be a probability space. We make the following assumptions on $(x_i)_{i \in \mathbb{N}}$:
\begin{itemize}
\item[-]$(x_i)_{i \in \mathbb{N}}$ is stationary, \textit{i.e.} its distribution law is invariant by translation and ergodic;
\item[-] the scatterers lie at a distance at least $\delta > 0$ from one another, \textit{i.e.} there exists $\delta > 0$ such that  $$\forall i \neq j,~dist(s_i, s_j) > \delta\quad \textrm{a.s.} $$ 
\end{itemize} 
We introduce the parameters
\begin{equation}
\left\{\begin{array}{l}
\dst \vsd a := a_M\mathbbm{1}_{\R^d \setminus \overline{S}} + a_S\mathbbm{1}_{S},\\
\dst \vsd n := n_M\mathbbm{1}_{\R^d \setminus \overline{S}} + n_S\mathbbm{1}_{S}.\\
\end{array}\right.
\end{equation}
For $\eps > 0$, we define
$$\mathcal{N}_\eps := \{i \in  \mathbb{N}~\vert~\eps x_i \in D \}.$$
$\mathcal{N}_\eps$ corresponds to the collection of scatterers of size $\eps$ that lie in $D$. We subsequently denote 
\begin{equation}
S^\eps := \underset{i \in \mathcal{N}_\eps}{\cup} \eps s_i\cap D.
\end{equation}
Note that we have then 
$$\forall x \in D,~a_\eps(x) = a\biggl(\frac{x}{\eps}\biggr) \textrm{ and } n_\eps(x) =  n\biggl(\frac{x}{\eps}\biggr).$$

Figure~\ref{fig:aeps} illustrate an example of a realization of $a_\eps$ in $D$.

\subsection{Stochastic setting} \label{subsection:randomness}

As it is customary in stochastic homogenization, we define stationarity and ergodicity through an action $(\tau_x)_{x\in\R^d}$ of the group $(\R^d , +)$ on $(\Omega, \F)$. \\
We thus equip $(\Omega, \F)$ with $(\tau_x)_{x\in\R^d}$ that verifies: 
\begin{itemize}
\item[-]the map $\tau: \dst\left\{\begin{array}{ccc}
\vsd\R^d\times \Omega&\to&\Omega
\\({x},\omega)&\mapsto&\tau_{x} \omega
\end{array}\right.$ is measurable,
\item[-] $\forall x,y\in\R^d, \tau_{x+y}=\tau_x \circ \tau_y,$ 
\item[-]For all $ x\in\R^d$, $\tau_x$ preserves $\Pro$,  \textit{i.e.}
$$\forall A\in \F,\,\, \Pro(\tau_{x}A)=\Pro(A).$$
\end{itemize}

\begin{defi}[Stationarity] \label{def:stationary}
In the rest of the paper, a random process $f : \R^d\times \Omega \to \R^p$ is said to be stationary (with respect to $\tau$) if
\begin{equation}
\forall x, y\in\R^d, \, \text{a.e.} \,\omega\in\Omega,\,\, f(x+y,\omega)=f(x, \tau_{y}\omega).
\end{equation}
\end{defi}

\noindent Moreover, we assume that the action $(\tau_{x})_{x\in\R^d}$ is ergodic.
\begin{defi}[Ergodicity] \label{def:ergodicity}
Any $\tau$-invariant event has probability 0 or 1, that is,
\begin{equation}\forall A\in\F, \,\,(\forall x\in\R^d, \,\, \tau^{-1}_{x} A=A)\implies (\Pro(A)\in\{0,1\}).
\end{equation}
\end{defi}
We can now write in terms of $\tau$ the stationary and ergodic assumption on $\left\{x_i^\omega\right\}_{i \in \mathbb{N}}$ the centers of the scatterers for the realization $\omega\in \Omega$.
\begin{equation}
\forall \omega \in \Omega,~\forall y \in \R^d,~\left\{x_i^{\omega} + y \right\}_{i \in \mathbb{N}} = \left\{x_i^{\tau_y \omega} \right\}_{i \in \mathbb{N}} .
\end{equation}
These two assumptions are the minimal and classical assumptions on the distribution of scatterers $(x_i)_{i \in \mathbb{N}}$ that we require for qualitative stochastic homogenization. 
In the rest of the paper the dependency on randomness $\omega \in \Omega$ is not mentioned explicitly.

We also assume that the process $(x_i)_{i \in \mathbb{N}}$ or equivalently $S$ verifies a quantitative mixing condition. 
We choose to express this condition as a multiscale variance inequality as introduced in \cite{DuerinckxGloria2017}. This assumption is verified by the most common hardcore point processes such as Mat\`ern point process \cite[Section~6.5.2]{Matern} as proved in \cite[Section~3]{DuerinckxGloria2017}.

\begin{hyp}[Mixing hypothesis] \label{hyp:mix_hyp}
There exists a non-increasing weight function \mbox{ $\pi:\R^+\to\R^+$ }with exponential decay such that $S$ verifies for all $\sigma(S)$- measurable random variable $F(S)$,
\begin{equation} \label{eq:decorr}
\Var\left[F(S)\right]\leq\E\left[\int_1^{+\infty}\int_{\R^d} \left(\partial^{osc}_{S, B_{\ell}(x)} F(S)\right)^2\mathrm{d}x \ell^{-d} \pi(\ell-1)\mathrm{d}\ell\right],
\end{equation}
where $B_\ell(x)$ is the ball with radius $\ell\geq 0$ and center $x\in\R^d$ and the oscillation $\partial^{osc}_{S, B_l(x)} F(S)$
 of $F(S)$ with respect to $S$ on $B_\ell(x)$ is defined by: \begin{multline*}\partial^{osc}_{S, B_l(x)} F(S):=\textrm{sup ess}\left\{F(S')|S'\cap(\R^d\setminus B_\ell(x))=S\cap(\R^d\setminus B_\ell(x))\right\}\\-\textrm{inf ess}\left\{F(S')|S'\cap(\R^d\setminus B_\ell(x))=S\cap(\R^d\setminus B_\ell(x))\right\}.
\end{multline*}
\end{hyp}

\begin{prop}[Mat\`ern process {\cite[Proposition~3.3]{DuerinckxGloria2017}}] \label{prop:mattern}
The Mat\`ern process verifies \eqref{eq:decorr} with the weight function $\pi$:
\begin{equation}
\pi(l) = C e^{-\frac{1}{C}l}
\end{equation}
for some $C>0$.
\end{prop}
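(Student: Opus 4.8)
Since the statement is quoted verbatim from \cite[Proposition~3.3]{DuerinckxGloria2017}, a self-contained argument would only need to reproduce its strategy, which I sketch here. The plan is to exploit that the Mat\`ern process is a \emph{local, measurable} transformation of an underlying Poisson point process, and to transport a variance inequality known for the Poisson process through this transformation.

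First I would realize the Mat\`ern process as a thinning $S=\Phi(\mathcal{P})$ of a marked Poisson point process $\mathcal{P}$ on $\R^d\times[0,1]$ (each point carrying an i.i.d. uniform birth-mark), a point being retained if and only if no point with a smaller mark lies within the hardcore distance $\delta$. The crucial structural property to verify is that $\Phi$ has \emph{finite range of dependence}: the restriction of $S$ to a Borel set $A$ is a function of $\mathcal{P}$ restricted to the $\delta$-neighbourhood of $A$ only. Consequently, resampling $\mathcal{P}$ inside a ball $B_\ell(x)$ can only alter $S$ inside $B_{\ell+\delta}(x)$, which yields the pointwise oscillation comparison
\begin{equation*}
\partial^{osc}_{\mathcal{P},B_\ell(x)}\bigl(F\circ\Phi\bigr)\ \le\ \partial^{osc}_{S,B_{\ell+\delta}(x)}F .
\end{equation*}

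Second, I would invoke (or re-derive) a multiscale variance inequality for the Poisson process itself. The quickest route partitions $\R^d$ into unit cells: the restrictions of $\mathcal{P}$ to disjoint cells are independent, so an Efron--Stein / martingale-difference tensorization gives a \emph{unit-scale} variance inequality $\Var[G(\mathcal{P})]\lesssim\E\bigl[\int_{\R^d}(\partial^{osc}_{\mathcal{P},B_1(x)}G)^2\,\mathrm{d}x\bigr]$. Because $\partial^{osc}_{\mathcal{P},B_1(x)}G\le\partial^{osc}_{\mathcal{P},B_\ell(x)}G$ for every $\ell\ge1$, this unit-scale bound is dominated by the multiscale right-hand side for \emph{any} weight with $\int_1^{+\infty}\ell^{-d}\pi(\ell-1)\,\mathrm{d}\ell<+\infty$, the constant being absorbable into $\pi$; in particular one may take $\pi$ exponentially decaying. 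Combining the two ingredients, I would apply this inequality to $G=F\circ\Phi$, insert the oscillation comparison, and change variables $\ell\mapsto\ell-\delta$. The shift sends $[1,+\infty)$ to $[1+\delta,+\infty)$ and replaces $\ell^{-d}\pi(\ell-1)$ by $(\ell-\delta)^{-d}\pi(\ell-1-\delta)$; since $(\ell-\delta)^{-d}\le C_d\,\ell^{-d}$ for $\ell\ge2\delta$ and an exponential weight obeys $\pi(\ell-1-\delta)\le e^{\delta/C}\pi(\ell-1)$, the finite-range shift preserves the exponential form and only renormalises the constant. This reproduces \eqref{eq:decorr} for $S$, proving Proposition~\ref{prop:mattern}.

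I expect the main obstacle to lie in the transfer through $\Phi$ rather than in the probabilistic input. One must check that the essential supremum and infimum defining $\partial^{osc}$ behave correctly under composition: the admissible resampled configurations $\Phi(\mathcal{P}')$ form, almost surely, a subset of the Mat\`ern configurations agreeing with $S$ outside $B_{\ell+\delta}(x)$, so that the sup over the smaller family is bounded by the sup over the larger one and the comparison goes in the right direction. One must also confirm that the range $\delta$ of the Mat\`ern~II thinning is genuinely uniform: deleting a point may reinstate a previously removed neighbour, but only one lying within distance $\delta$, which is exactly what keeps the range of dependence bounded and the weight exponentially decaying.
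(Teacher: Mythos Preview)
The paper does not give its own proof of this proposition: it is stated as a direct quotation of \cite[Proposition~3.3]{DuerinckxGloria2017} and used as a black box. Your sketch reproduces the strategy of that reference (Mat\`ern as a finite-range transformation of a Poisson process, oscillation comparison through the thinning map, and transport of a variance inequality for the Poisson process), so you are essentially reconstructing the cited argument rather than offering an alternative; there is nothing further to compare.
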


\begin{rem}
Note that the Hypothesis~\ref{hyp:mix_hyp} implies that the covariance function of $S$ : $C_S(x) := \Cov(S(0), S(x))$ satisfies \cite[Proposition~1.3]{duerinckx2017multiscale}
\begin{equation}
\forall x \in \R^d,~\vert C_S(x) \vert \lesssim \int_{\max(\frac{1}{2}(\vert x \vert - 2), 0)}^\infty \pi(\ell) \mathrm{d}\ell.
\end{equation}
where the notation $\lesssim$ stands for "inferior up to a multiplicative constant dependent only on the dimension and possibly other controlled quantities" and will be used throughout the paper. 
For a Mat\`ern process this implies that $C_S$ has an exponential decay.
\end{rem}

\section{Two-scale asymptotic expansion of the field} \label{section:2scErr}

\subsection{Homogenized problem}

We restrict our domain of study to $B_R$ the ball of radius $R > 0$ centered at $0$, via the Dirichlet-to-Neumann operator $\Lambda: H^{\frac{1}{2}}(\partial B_R) \to H^{-\frac{1}{2}}(\partial B_R)$. This operator takes a Dirichlet data $g \in H^{\frac{1}{2}}(B_R)$ and maps it to the Neumann trace of $u$ on $\partial B_R$ \textit{i.e.} ${\Lambda g = \nabla u \cdot \nu \in H^{-\frac{1}{2}}(\partial B_R)}$ where $u$ is the outgoing the solution of 
$$- \Delta u - k^2 n_0 u = 0 \textrm{ in } \R^d\setminus{\overline{B_R}}, \textrm{ satisfying } u\vert_{\partial B_R} =  g.$$
$\Lambda$ is continuous, self-adjoint and non-positive and its expansion  in terms of Hankel functions can be found for example in \cite{chandler2008wave}, \cite[Section~2.6.3]{nedelec2001acoustic} and \cite{melenk2010convergence}. \\
We thus consider $u_\eps$ the a.s. unique solution in $H^1(B_R)$ to 
\begin{equation}\label{eq:mainBR}
\left\{\begin{aligned}
& -\nabla\cdot\left(a_\eps \nabla u_\eps \right) - k^2 n_\eps u_\eps = 0 && \textrm{in } B_R, \\
& \nabla (u_\eps - u^{inc})\cdot \nu = \Lambda(u_\eps - u^{inc}) && \textrm{on } \partial B_R. \\
\end{aligned} \right.
\end{equation}
The well-posedness of \eqref{eq:mainBR} for non-smooth coefficients is a difficult problem in $3d$.  We refer to \cite{ball2012uniqueness} for the proof in the $L^\infty$ case.  However,  the proof relies on Fredholm theory and unique continuation principle and therefore does not yield a uniform explicit control with respect to $\varepsilon$ and $\omega$.  To obtain this type of uniform control that will be necessary for the homogenization process,  we have to add some additional assumptions on the coefficients.  For $kR$ sufficiently small or $\Im k>0$  the sesquilinear form associated to \eqref{eq:mainBR} can be proved to be coercive and the uniform bound in $\eps$ and $\omega$ of $u_\eps$ can be achieved \cite{spence}. We also point out that some other methods were developed in \cite{spence} and \cite{bouchitte2004homogenization} to obtain uniform control of the solution, but they do not apply to our specific problem. 
Here, we assume that the sesquilinear form associated to \eqref{eq:mainBR} is coercive so that Proposition~\ref{prop:stabAbsorb_smallKR} holds. The following homogenization theorem follows directly.
\begin{prop}[Qualitative homogenization theorem] \label{prop:homog}
A.s. the unique solution $u_\eps \in H^1(B_R)$ of \eqref{eq:mainBR} converges weakly in $H^1(B_R)$ towards $u_0$, the unique solution in $H^1(B_R)$ of the following problem
\begin{equation}\label{eq:u0BR}
\left\{\begin{aligned}
& -\Delta u_0 - k^2 n_0 u_0 = 0 && \textrm{in } B_R \setminus \overline{D}, \\
& -\nabla\cdot\left(a^{hom} \nabla u_0 \right) - k^2 n^{hom} u_0 = 0 && \textrm{in } D, \\
& u_0^- - u_0^+ = 0 && \textrm{on } \partial D, \\
& \nabla u_0^- \cdot \nu - a^{hom} \nabla u_0^+ \cdot \nu = 0 && \textrm{on } \partial D, \\
& \nabla (u_0 - u^{inc})\cdot \nu = \Lambda(u_0 - u^{inc}) && \textrm{on } \partial B_R, \\
\end{aligned} \right.
\end{equation}
where the superscripts $^-$ and $^+$ denote the traces outside and inside $D$.\\
The homogenized coefficients $a^{hom} \in \mathcal{M}_d(\R)$ and $n^{hom} \in (0, +\infty)$ are defined as follows
\begin{equation} \label{eq:homcoeff}
\left\{\begin{aligned}
a^{hom}_{i,j} & = \E[e_j \cdot a (e_i + \nabla \phi_i)], \\
n^{hom} & =  \E[n],
\end{aligned}
\right.
\end{equation}
and for $i \in \intInter{1,d}$ the corrector $\phi_i$ is defined as in the forthcoming Definition~\ref{def:corrector}.
\end{prop}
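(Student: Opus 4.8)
The plan is to run the classical compactness scheme of qualitative stochastic homogenization — Tartar's method of oscillating test functions combined with the ergodic theorem — on the weak formulation of \eqref{eq:mainBR}, for a fixed realization $\omega$ in the almost-sure event on which the correctors of Definition~\ref{def:corrector} exist and the relevant ergodic averages converge; throughout I distinguish the homogeneous exterior $B_R\setminus\overline{D}$, where $a_\eps=Id$ and $n_\eps=n_0$, from the oscillating interior $D$, where $a_\eps(x)=a(x/\eps)$ and $n_\eps(x)=n(x/\eps)$. First I would derive a uniform a priori bound: under the standing coercivity assumption (Proposition~\ref{prop:stabAbsorb_smallKR}) the sesquilinear form associated with \eqref{eq:mainBR} is coercive uniformly in $\eps$ and $\omega$, so testing against $u_\eps$ and using the continuity of $\Lambda:H^{1/2}(\partial B_R)\to H^{-1/2}(\partial B_R)$ yields $\norme{u_\eps}_{H^1(B_R)}\le C$ with $C$ independent of $\eps$ and $\omega$; since $a$ is uniformly elliptic and bounded, the flux $a_\eps\nabla u_\eps$ is also bounded in $L^2(B_R)^d$. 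By weak compactness in $H^1(B_R)$ and the compact Rellich embedding $H^1(B_R)\hookrightarrow L^2(B_R)$, I extract a subsequence along which $u_\eps\rightharpoonup u_*$ in $H^1(B_R)$, $u_\eps\to u_*$ strongly in $L^2(B_R)$ and in $L^2(\partial B_R)$, and $a_\eps\nabla u_\eps\rightharpoonup\xi$ in $L^2(B_R)^d$ for some limiting flux $\xi$.

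The core of the argument is the identification of $\xi$. In the interior of $B_R\setminus\overline{D}$ the coefficients are constant, so $a_\eps\nabla u_\eps=\nabla u_\eps\rightharpoonup\nabla u_*$ and $\xi=\nabla u_*$ there. In the interior of $D$ I would use the correctors $\phi_i$ of Definition~\ref{def:corrector} to build the oscillating test fields $w_i^\eps(x):=x_i+\eps\,\phi_i(x/\eps)$, which satisfy $-\nabla\cdot(a_\eps\nabla w_i^\eps)=0$ in $D$ by the corrector equation. The ergodic theorem, applied to the stationary mean-zero field $\nabla\phi_i$ and to the stationary field $a(e_i+\nabla\phi_i)$, gives $\eps\,\phi_i(\cdot/\eps)\to 0$ strongly in $L^2_{loc}$, $\nabla w_i^\eps=e_i+\nabla\phi_i(\cdot/\eps)\rightharpoonup e_i$, and $a_\eps\nabla w_i^\eps\rightharpoonup a^{hom}e_i$ weakly in $L^2$, where $a^{hom}$ is given by \eqref{eq:homcoeff}; likewise $n_\eps\rightharpoonup\E[n]=n^{hom}$ weakly-$\ast$ in $L^\infty(D)$. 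Testing against $\varphi\in C_c^\infty(D)$, the gradient $\nabla u_\eps$ is curl-free while $a_\eps\nabla w_i^\eps$ is divergence-free, so the div-curl lemma lets me pass to the limit in the two products $(a_\eps\nabla u_\eps)\cdot\nabla w_i^\eps$ and $(a_\eps\nabla w_i^\eps)\cdot\nabla u_\eps$; since these products coincide (using the symmetry of $a$, or the adjoint corrector in the non-symmetric case), comparing the two limits gives $\xi\cdot e_i=a^{hom}e_i\cdot\nabla u_*$ for every $i$, hence $\xi=a^{hom}\nabla u_*$ a.e. in $D$.

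It then remains to pass to the limit in the full weak formulation of \eqref{eq:mainBR}. The flux term converges to $\int_{B_R}\xi\cdot\nabla\bar v$ with $\xi=\nabla u_*$ in $B_R\setminus\overline{D}$ and $\xi=a^{hom}\nabla u_*$ in $D$; the zeroth-order term is handled by weak-times-strong convergence, $\int_{B_R}n_\eps u_\eps\bar v\to\int_{B_R}n^{hom}u_*\bar v$, since $n_\eps\rightharpoonup n^{hom}$ weakly-$\ast$ and $u_\eps\to u_*$ strongly in $L^2$; and the nonlocal boundary term passes to the limit by continuity of $\Lambda$ and convergence of the traces. The resulting identity is exactly the weak formulation of \eqref{eq:u0BR}: the continuity $u_*^-=u_*^+$ is encoded in $u_*\in H^1(B_R)$, and the transmission of the flux $\nabla u_*^-\cdot\nu=a^{hom}\nabla u_*^+\cdot\nu$ follows from the continuity of the normal component of the limiting flux $\xi$ across $\partial D$. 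Finally, the limiting sesquilinear form is coercive by the same assumption, so \eqref{eq:u0BR} has a unique solution; thus $u_*=u_0$, and, the limit being independent of the extracted subsequence, the entire family converges, $u_\eps\rightharpoonup u_0$ in $H^1(B_R)$, almost surely.

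I expect the main obstacle to be the rigorous flux identification: justifying, via the ergodic theorem, the strong convergence $\eps\,\phi_i(\cdot/\eps)\to 0$ and the weak convergence $a_\eps\nabla w_i^\eps\rightharpoonup a^{hom}e_i$ (where the sublinearity of the corrector enters), and localizing the div-curl lemma so that the interior homogenized flux and the exterior flux match correctly in the transmission conditions at $\partial D$. A secondary technical point, deferred to the quantitative analysis, is that the scatterers cut by $\partial D$ in the definition of $S^\eps$ contribute only lower-order effects that are invisible in the weak limit.
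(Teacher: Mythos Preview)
Your proposal is correct and follows essentially the same route as the paper: uniform $H^1$-bound via the coercivity assumption, compactness, flux identification by Tartar's oscillating test functions, and uniqueness of the limit. The only cosmetic difference is that the paper works directly with the adjoint corrector $\psi_i$ and an explicit integration-by-parts with test functions $\zeta(x)(x_i+\eps\psi_i(x/\eps))$, $\zeta\in C_c^\infty(B_R)$, rather than invoking the div-curl lemma; since $a$ is not assumed symmetric in the paper, your parenthetical ``or the adjoint corrector in the non-symmetric case'' is in fact the relevant one.
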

We define for any integrable function $f$ and any domain $B$ the notation $\fint_B f$ as 
$$\fint_B f := \frac{1}{|B|}\int_B f. $$
\begin{defi}[Corrector] \label{def:corrector}
Let $(\phi_i)_{i \in \intInter{1,d}}$ be the unique vector field such that for all $i \in \intInter{1,d}$
\begin{enumerate}[label=(\alph*)]
\item a.s. $\phi_i \in H^1_{loc}(\R^d)$ is the solution in $\D'(\R^d)$ of
\begin{equation} \label{eq:phi}
-\nabla \cdot \biggl( a (\nabla \phi_i + e_i) \biggr) = 0 \quad \textrm{in } \R^d,
\end{equation} 
with the anchoring condition $$ \fint_{\square_0} \phi_i = 0, $$
where $\square_x$ denotes the unit square centered at $x$
\begin{equation}
\square_x := [-\frac{1}{2} + x, \frac{1}{2} + x]^d. 
\end{equation} 
\item $\nabla \phi$ is stationary, has finite second moments and vanishing expectation.
\end{enumerate}
\end{defi}

Once the uniform bound on $u_\eps$ is established, the proof of Proposition~\ref{prop:homog} follows from the classical steps of stochastic homogenization using Tartar's method \cite{tartar} of oscillating test functions. For the sake of completeness, we detail it in Appendix~\ref{app:qualitative_hom}. 
$a^{hom}$ is definite positive ensuring the well-posedness of the homogenized problem \eqref{eq:u0BR}.

\subsection{Two-scale expansion error and boundary layer}
The qualitative homogenization theory implies that a.s. $u_\eps$ converges to $u_0$ strongly in $L^2(B_R)$ and weakly in $H^1(B_R)$. In order to upgrade this result to strong convergence in $H^1(B_R)$ and get a quantitative rate of convergence, one needs to consider the contribution of the first-order corrector $u_{1,\eps} \in H^1(B_R \setminus \overline{D}) \times H^1(D)$.

\begin{defi}[first-order corrector] \label{def:u1}

Let $u_{1,\eps} \in H^1(B_R \setminus \overline{D}) \times H^1(D)$ be the first-order corrector defined by: 
\begin{equation} \label{eq:u1}
u_{1,\eps}(x) := \mathbbm{1}_{D}(x) \sum_{i=1}^d \phi_i \biggl(\frac{x}{\eps} \biggr) \partial_i u_0(x) \quad \textrm{for } x \in B_R.
\end{equation}
\end{defi}

This definition of $u_{1,\eps}$ corresponds to the usual definition inside $D$. Since there is no micro-structure outside of $D$, we extend it to $u_{1,\eps} = 0$ in $B_R \setminus \overline{D}$. \\
Since $u^{inc} \in \mathcal{C}^\infty(\R^d)$ and $ \partial D$ is $\mathcal{C}^4$, ${u_0}_{|_D}$ is in $H^2(D)$ (see Appendix~\ref{app:regularity}, Proposition~\ref{prop:regHk}). Therefore, ${u_{1, \eps}}_{|_D}$ is indeed in $H^1(D)$. Similarly, we introduce ${w_\eps \in H^1(B_R \setminus \overline{D}) \times H^1(D)}$, the two-scale expansion defined as follows : 
\begin{defi}[Two-scale expansion] \label{def:twoScExp}
\begin{equation} \label{eq:weps}
\begin{split}
w_\eps(x) & := u_0(x) + \eps u_{1,\eps}(x) \quad \textrm{for } x \in B_R. \\
\end{split}
\end{equation}
\end{defi}

\subsection{Two-scale error - boundary corrector} \label{subsec:homExpansion}

We want to quantify the error $Z_\eps := u_\eps - w_\eps$ between the solution of \eqref{eq:mainBR} and its two-scale expansion \eqref{eq:weps} in $H^1(B_R \setminus \overline{D}) \times H^1(D)$. \\
In a bounded Lipschitz domain $U$, it has been shown \cite[Chapter~6]{armstrong2019quantitative} in dimension $3$ that for the Poisson equation, both with Dirichlet and Neumann boundary conditions, the following holds
$$\E[\norme{\nabla u_\eps - \nabla w_\eps}_{L^2(U)}^2]^{\frac{1}{2}} \lesssim \eps^{1/2} \norme{u_0}_{W^{2, \infty}(U)}.$$ 
The order $1/2$ of the error is due to the fact that $u_{1,\eps}$ and thus $w_\eps$ do not satisfy the Dirichlet or Neumann boundary conditions on $\partial U$. To obtain an error of order $\eps$, one needs to take into account what happens at the boundary and add the correct boundary corrector \cite{armstrong2019quantitative}. We establish a similar result for the Helmholtz transmission problem. Let us define two extended correctors which appear naturally while deriving the problem verified by $Z_\eps$.

\begin{defi}[Extended corrector] \label{def:ExtendedCorrector}
Let $\beta := (\beta_i)_{i \in \intInter{1,d}}$ be the unique vector field and let $\sigma :=  (\sigma_{i,jm})_{i,j,m \in \intInter{1,d}}$ be the unique tensor field  such that for all $i,j,m \in \intInter{1,d}$, 
\begin{enumerate}[label=(\alph*)]
\item A.s. $\beta_i \in H^1_{loc}(\R^d)$ and $\sigma_{i,jm} \in H^1_{loc}(\R^d)$ are the solutions in $\D'(\R^d)$ of: 
\begin{equation} \label{eq:extended_corrector}
\left\{\begin{aligned}
- \Delta \beta_i(y) & = \partial_i (n(y) - n^{hom}),  \\
- \Delta \sigma_{i,jm}(y) & = \partial_j q_{im}(y) - \partial_m q_{ij}(y), \\
\end{aligned}
\right.
\end{equation}
with $$q_i := a(e_i + \nabla \phi_i) - a^{hom} e_i$$
and are anchored with the condition: 
\begin{equation}
\fint_{\square_0} \sigma_{i,jm} = \fint_{\square_0} \beta_i = 0.
\end{equation} 
Furthermore, $\sigma_i$ is skew-symmetric and verifies a.s.
\begin{equation}
\nabla \cdot \sigma_i =  q_i 
\end{equation} 
where for $i,j \in \intInter{1,d}$,
\begin{equation}
(\nabla \cdot \sigma_i)_j := \sum_{m=1}^d \partial_m \sigma_{i, jm}
\end{equation} 
and $\beta$ verifies a.s. 
\begin{equation}
\nabla \cdot \beta = n - n^{hom}. 
\end{equation} 
\item $\nabla \beta_i$ and $\nabla \sigma_{i,jm}$ are stationary, have finite second moments and vanishing expectation.
\end{enumerate}

\end{defi} 

$\sigma$ is the classical extended corrector in stochastic homogenization of the operator $-\nabla \cdot a \nabla$ and can be found for example in \cite[Lemma~1]{gloria2014regularity}. The well-posedness of $\beta$ is proven in the exact same manner. \\

We can now write the problem verified by $Z_\eps$. 
\begin{lem}[Two-scale error] \label{lem:twoScError}
$Z_\eps := u_\eps - w_\eps$ is a.s. the unique solution in~$H^1(B_R \setminus \overline{D})~\times~H^1(D)$ of:
\begin{equation} \label{eq:Zeps}
\left\{\begin{aligned} 
& -\Delta Z_\eps - k^2 n_0 Z_\eps = 0 &&\textrm{in}\, B_R \setminus\bar{D},\\
& -\nabla \cdot a_\eps \nabla Z_\eps - k^2 n_\eps Z_\eps = \nabla \cdot F_\eps + k^2 G_\eps &&\textrm{in}\, D,\\
& Z_\eps^- - Z_\eps^+ = \eps u_{1,\eps} &&\textrm{on}\, \partial D, \\
& \nabla Z_\eps^- \cdot \nu - a_\eps \nabla Z_\eps^+ \cdot \nu \\
& \hspace{1cm} = F_\eps \cdot \nu + \eps \sum_{i=1}^d \left(\nabla \cdot(\sigma_i^\eps \partial_i u_0)^+\right) \cdot \nu - k^2 \eps (\beta^\eps u_0)^+ \cdot \nu   &&\textrm{on}\, \partial D, \\
& \nabla Z_\eps \cdot \nu = \Lambda(Z_\eps) &&\textrm{on}\, \partial B_R,
 \end{aligned}
\right.
\end{equation}
where $F_\eps \in H^1(D)$, $G_\eps \in H^1(D)$ are defined as follows
\begin{equation} \label{Feps}
F_\eps := \eps \sum_{i=1}^d (a_\eps \phi_i^\eps -\sigma_i^\eps)\nabla (\partial_i u_0) + \eps k^2 \beta^\eps u_0,
\end{equation}
\begin{equation}\label{Geps}
G_\eps := \eps \sum_{i=1}^d (n_\eps \phi_i^\eps - \beta_i^\eps) \partial_i u_0 .
\end{equation}
\end{lem}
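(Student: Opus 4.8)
The statement is, at bottom, a bookkeeping identity: since $Z_\eps = u_\eps - w_\eps$ and $u_\eps$ solves \eqref{eq:mainBR}, each line of \eqref{eq:Zeps} is produced by inserting the two-scale expansion $w_\eps = u_0 + \eps u_{1,\eps}$ (Definitions~\ref{def:u1} and \ref{def:twoScExp}) into the operator $-\nabla\cdot a_\eps\nabla - k^2 n_\eps$ and into the relevant traces, and then using the equations satisfied by $u_0$ (Proposition~\ref{prop:homog}) and by the correctors (Definitions~\ref{def:corrector} and \ref{def:ExtendedCorrector}). The plan is to verify the five lines one at a time. The two easy ones come first: outside $D$ one has $a_\eps = Id$, $n_\eps = n_0$ and $u_{1,\eps}\equiv 0$, so $w_\eps = u_0$ and the subtraction of the exterior Helmholtz equations for $u_\eps$ and $u_0$ gives the first line; on $\partial B_R$, again $w_\eps = u_0$, so $Z_\eps = (u_\eps-u^{inc})-(u_0-u^{inc})$ there and the linearity of $\Lambda$ together with the two radiation/DtN conditions gives $\nabla Z_\eps\cdot\nu = \Lambda(Z_\eps)$.

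The core is the interior of $D$, which I would treat through the weak formulation so that the volume source and the interface flux jump emerge together. Computing $a_\eps\nabla w_\eps$ inside $D$ and using $q_i := a(e_i+\nabla\phi_i)-a^{hom}e_i$ yields $a_\eps\nabla w_\eps = a^{hom}\nabla u_0 + \sum_i(\partial_i u_0)\,q_i^\eps + \eps\, a_\eps\sum_i\phi_i^\eps\nabla\partial_i u_0$. Two structural facts drive the computation: $\nabla\cdot q_i = 0$ (the corrector equation \eqref{eq:phi}), and $q_i = \nabla\cdot\sigma_i$ with $\sigma_i$ skew-symmetric, which after rescaling reads $q_i^\eps = \eps\,\nabla\cdot\sigma_i^\eps$ and lets me write, via the product rule, $\sum_i(\partial_i u_0)\,q_i^\eps = \eps\sum_i\nabla\cdot(\sigma_i^\eps\,\partial_i u_0) - \eps\sum_i\sigma_i^\eps\nabla\partial_i u_0$. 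For the zeroth-order term I would likewise use $\nabla\cdot\beta = n - n^{hom}$ to write $(n_\eps-n^{hom})u_0 = \eps\,\nabla\cdot(\beta^\eps u_0) - \eps\,\beta^\eps\cdot\nabla u_0$. Substituting, the leading-order contribution $\nabla\cdot(a^{hom}\nabla u_0) + k^2 n^{hom} u_0$ cancels by the homogenized equation \eqref{eq:u0BR}, and what remains regroups exactly into $\nabla\cdot F_\eps + k^2 G_\eps$ with $F_\eps$, $G_\eps$ as in \eqref{Feps}--\eqref{Geps}.

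Two interface conditions of a geometric nature remain. The value jump follows from the continuity of $u_\eps\in H^1(B_R)$ across $\partial D$ and the continuity $u_0^- = u_0^+$ from \eqref{eq:u0BR}: since $w_\eps^- = u_0$ while $w_\eps^+ = u_0 + \eps u_{1,\eps}$, subtraction gives $Z_\eps^- - Z_\eps^+ = \eps u_{1,\eps}$. The flux jump is the delicate one and is best read off from the boundary terms generated when integrating $\int_D a_\eps\nabla w_\eps\cdot\nabla\bar v$ by parts against a test function: the global $H^1(B_R)$ solution $u_\eps$ has continuous conormal flux $\nabla u_\eps^-\cdot\nu = a_\eps\nabla u_\eps^+\cdot\nu$, the homogenized transmission condition $\nabla u_0^-\cdot\nu = a^{hom}\nabla u_0^+\cdot\nu$ removes the $a^{hom}\nabla u_0$ contribution, and the surviving traces of $\sum_i(\partial_i u_0)\,q_i^\eps$ and of $\eps\, a_\eps\sum_i\phi_i^\eps\nabla\partial_i u_0$—split through the $\sigma$-identity above—assemble into precisely $F_\eps\cdot\nu + \eps\sum_i(\nabla\cdot(\sigma_i^\eps\partial_i u_0)^+)\cdot\nu - k^2\eps(\beta^\eps u_0)^+\cdot\nu$. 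Existence is then free, because $Z_\eps = u_\eps - w_\eps$ is by construction a solution, and uniqueness follows from the coercivity assumption (Proposition~\ref{prop:stabAbsorb_smallKR}), under which the homogeneous transmission problem admits only the trivial solution.

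I expect the main obstacle to be this flux jump on $\partial D$: one must track the consistent distribution of $\sum_i(\partial_i u_0)\,q_i^\eps$ between the interior source $\nabla\cdot F_\eps$ and the surface term $\eps(\nabla\cdot(\sigma_i^\eps\partial_i u_0))\cdot\nu$, carrying the integration by parts out carefully so that the skew-symmetry of $\sigma$ is used correctly and no boundary contribution is dropped. A secondary technical point is to justify that $F_\eps, G_\eps\in H^1(D)$, which relies on the interior regularity of $u_0|_D$ (stated as $H^2(D)$, and in fact higher thanks to the $\mathcal C^4$ boundary) together with the $H^1_{loc}$ regularity and stationarity of the correctors $\phi_i$, $\sigma_i$, $\beta_i$.
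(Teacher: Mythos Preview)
Your proposal is correct and follows essentially the same route as the paper: both derive the interior equation by expanding $a_\eps\nabla w_\eps$ via $q_i^\eps = \eps\nabla\cdot\sigma_i^\eps$ and the skew-symmetry of $\sigma_i$, handle the zeroth-order term through $n_\eps - n^{hom} = \eps\nabla\cdot\beta^\eps$, and obtain the trace and flux jumps from the transmission conditions of $u_\eps$ and $u_0$, with well-posedness supplied by Proposition~\ref{prop:stabAbsorb_smallKR}. The only cosmetic difference is that the paper carries out the interior and boundary computations separately in strong form, whereas you frame them as emerging together from a single integration by parts; the algebraic content is identical.
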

Here, $\phi^\eps$ denotes $\phi^\eps(\cdot) := \phi(\frac{\cdot}{\eps}) $. $\beta_i^\eps$ and $\sigma_i^\eps$ are defined similarly from $\beta_i$ and $\sigma_i$ and for the rest of the paper. Note that we have then 
$$\nabla \phi^\eps(\cdot) = \frac{1}{\eps}(\nabla \phi) (\frac{\cdot}{\eps}).$$

\begin{proof}
Let us first derive the problem satisfied by $Z_\eps$ before proving well-posedness. In $B_R \setminus \overline{D}$ and on $\partial B_R$, $u_\eps$ and $w_\eps$ verify the same equation and so does $Z_\eps$.
Using the equation \eqref{eq:mainBR} for $u_\eps$ and equation \eqref{eq:u0BR} for $u_0$, we have moreover
\begin{equation}
\begin{split}
-\nabla \cdot & (a_\eps \nabla Z_\eps) -k^2 n_\eps Z_\eps \\
& = -\nabla \cdot (a^{hom} \nabla u_0) - k^2 n^{hom} u_0 + \nabla \cdot \left(a_\eps (\nabla u_0 + \eps \nabla u_{1,\eps}) \right) + k^2 n_\eps (u_0 + \eps u_{1,\eps})  \quad \textrm{in } D.
\end{split}
\end{equation}
By the definition of the extended corrector of Definition~\ref{def:ExtendedCorrector},
\begin{equation} \label{eq:Geps}
\begin{split}
- k^2 n^{hom} & u_0 + k^2 n_\eps (u_0 + \eps u_{1,\eps}) \\
& = k^2 (n_\eps - n^{hom}) u_0 + \eps k^2 n_\eps \sum_{i=1}^d \phi_i^\eps \partial_i u_0  \\
& = k^2 (\nabla \cdot \beta) (\frac{\cdot}{\eps}) u_0 + \eps k^2 n_\eps \sum_{i=1}^d \phi_i^\eps \partial_i u_0 \\
& = k^2 \eps \nabla \cdot (\beta^\eps u_0) + \eps k^2 \sum_{i=1}^d - \beta^\eps_i \partial_i u_0 +  n_\eps \phi_i^\eps \partial_i u_0 \\
& = k^2 G_\eps + \eps k^2 \nabla \cdot (\beta^\eps u_0).
\end{split}
\end{equation}
By the skew-symmetry of $\sigma_i$, $i \in \intInter{1,d}$, for all $x \in D$,
\begin{equation} 
\begin{split}
\nabla \cdot \left(\sum_{i=1}^d (\nabla \cdot \sigma_i)(\frac{x}{\eps}) \partial_i u_0(x) \right) & = \nabla \cdot (\eps \sum_{i=1}^d (\nabla \cdot \sigma_i^\eps(x)) \partial_i u_0(x) ) \\
& = \eps \sum_{i,j,m=1}^d \partial_{j} ((\partial_m \sigma_{i,jm}^\eps(x)) \partial_i u_0(x) ) \\
& = \eps \sum_{i,j,m=1}^d \partial_{jm} (\sigma_{i,jm}^\eps(x) \partial_i u_0(x) ) - \partial_{j} ( \sigma_{i,jm}^\eps(x) (\partial_{im} u_0(x)))  \\
& = -\eps \sum_{i,j,m=1}^d \partial_{j} ( \sigma_{i,jm}^\eps(x) (\partial_{im} u_0(x) ))  \\
& = -\eps \nabla \cdot \left(\sum_{i=1}^d \sigma_i^\eps(x) \nabla \partial_i u_0(x) \right).
\end{split}
\end{equation}
Therefore, we obtain
\begin{equation} \label{eq:Feps}
\begin{split}
-\nabla \cdot  & (a^{hom} \nabla u_0) + \nabla \cdot (a_\eps (\nabla u_0 + \eps \nabla u_{1,\eps})) \\
& = \sum_{i=1}^d \nabla \cdot \biggl(\biggl((a (e_i + \nabla \phi_i) - a^{hom} e_i\biggr)(\frac{\cdot}{\eps}) \partial_i u_0 \biggr) + \eps \nabla \cdot \left(a_\eps \phi_i^\eps \nabla (\partial_i u_0 )\right)  \\
& = \sum_{i=1}^d \nabla \cdot \left((\nabla \cdot \sigma_i)(\frac{\cdot}{\eps}) \partial_i u_0 \right) + \eps \nabla \cdot \left(a_\eps \phi_i^\eps \nabla (\partial_i u_0)\right) \\
& = \nabla \cdot \left(\eps \sum_{i=1}^d \left((a_\eps \phi_i^\eps - \sigma_i^\eps) \nabla \partial_i u_0 \right)\right)  \\
& = \nabla \cdot F_\eps - \eps k^2 \nabla \cdot (\beta^\eps u_0).
\end{split}
\end{equation}
By combining \eqref{eq:Geps} and \eqref{eq:Feps}, we obtain the error satisfied inside $D$. \\
Using the jump conditions of $u_\eps$ and $u_0$ across $\partial D$, one gets:
\begin{equation}
Z_\eps^- - Z_\eps^+ = \eps u_{1,\eps},
\end{equation}
and the flux jump:
\begin{equation}
\begin{split}
\nabla & Z_\eps^- \cdot \nu  - a_\eps \nabla Z_\eps^+ \cdot \nu \\
& = \sum_{i=1}^d ((a_\eps (e_i + \nabla \phi_i^\eps) - a^{hom} e_i) \partial_i u_0)^+ \cdot \nu + \eps (a_\eps \phi_i^\eps \nabla  (\partial_i u_0))^+ \cdot \nu \\
& = \sum_{i=1}^d (\eps (\nabla \cdot \sigma_i^\eps) \partial_i u_0)^+ \cdot \nu + \eps (a_\eps \phi_i^\eps \nabla  \partial_i u_0)^+ \cdot \nu \\
& =  \eps \sum_{i=1}^d((a_\eps \phi_i^\eps - \sigma_i^\eps) \nabla (\partial_i u_0) )^+ \cdot \nu + \nabla \cdot (\sigma_i^\eps \partial_i u_0)^+ \cdot \nu  \\
& = F_\eps \cdot \nu + \eps \sum_{i=1}^d \nabla \cdot (\sigma_i^\eps \partial_i u_0)^+ \cdot \nu - k^2 \eps (\beta^\eps u_0)^+ \cdot \nu. \\
\end{split}
\end{equation}
The well-posedness of \eqref{eq:Zeps} is a direct consequence of Proposition~\ref{prop:stabAbsorb_smallKR}. Since a.s. for $i,j,k \in \intInter{1,d}$, $\phi_i$, $\beta_i$, $\sigma_{i,jk} \in H^1_{loc}(\R^d)$ then $\phi_i$, $\beta_i$, $\sigma_{i,jk} \in H^1(D)$. Moreover, $u_0 \in H^2(D)$ and $u_{1, \eps} \in H^1(D)$. Therefore, $F_\eps$, $G_\eps \in H^1(D)$, ${u_{1, \eps}}_{\vert \partial D} \in H^{\frac{1}{2}}(\partial D)$ and $F_\eps \cdot \nu + \eps \sum_{i=1}^d \nabla \cdot (\sigma_i^\eps \partial_i u_0)^+ \cdot \nu - k^2 \eps (\beta^\eps u_0)^+ \cdot \nu \in H^{-\frac{1}{2}}(\partial D)$, and we can apply Proposition~\ref{prop:stabAbsorb_smallKR}.
\end{proof}

As it is customary in homogenization in the presence of boundary since \cite{allaire1999boundary}, we introduce the boundary corrector $v_\eps$ also called boundary layer, that ensures that $Z_\eps - v_\eps$ verifies the transmission conditions on $\partial D$.

\begin{defi}[Boundary corrector] \label{def:BC}
Let $v_\eps$ be the a.s. unique solution in $H^1(B_R \setminus \overline{D}) \times H^1(D)$ of
\begin{equation} \label{eq:veps}
\left\{\begin{aligned} 
& - \Delta v_\eps - k^2 n_0 v_\eps = 0 &&\textrm{in}\, B_R\setminus\overline{D},\\
&-\nabla\cdot a_\eps \nabla v_\eps - k^2 n_\eps v_\eps = 0&&\textrm{in}\, D,\\
& v_\eps^- - v_\eps^+ = \eps u_{1,\eps} &&\textrm{on}\, \partial D, \\
& \nabla v_\eps^- \cdot \nu - a_\eps \nabla v_\eps^+ \cdot \nu = \eps \sum_{i=1}^d \left(\nabla \cdot(\sigma_i^\eps \partial_i u_0)^+\right) \cdot \nu - k^2 \eps (\beta^\eps u_0)^+ \cdot \nu   &&\textrm{on}\, \partial D, \\
& \nabla v_\eps \cdot \nu = \Lambda(v_\eps) && \textrm{on}\, \partial B_R. \\
 \end{aligned}
\right.
\end{equation}
\end{defi}
The well-posedness is once again a consequence of Proposition~\ref{prop:stabAbsorb_smallKR}.

\begin{rem}
The definition of this corrector is very similar to the boundary layer introduced in \cite{Cakoni2016OnTH} which deals with the periodic case. \\
However, all the analysis done in \cite{Cakoni2016OnTH} cannot be applied here as it uses the $L^\infty(\R^d)$-boundness of the corrector which does not hold here. \\
\end{rem}

\begin{prop}[Two-scale error with the boundary layer] \label{prop:twoscerrBC}
A.s. $Z_\eps - v_\eps$ is the unique solution in $H^1(B_R)$ of
\begin{equation} \label{eq:Zepsveps}
\left\{\begin{aligned} 
&-\nabla\cdot a_\eps \nabla (Z_\eps - v_\eps) - k^2 n_\eps (Z_\eps - v_\eps) = \nabla \cdot F_\eps + k^2 G_\eps && \textrm{in} \, B_R,\\
& \nabla (Z_\eps - v_\eps) \cdot \nu = \Lambda(Z_\eps - v_\eps) && \textrm{on} \, \partial B_R.\\
 \end{aligned}
\right.
\end{equation}
Moreover, $Z_\eps - v_\eps$ verifies a.s.

\begin{equation} \label{eq:H1err}
\norme{Z_\eps - v_\eps}_{H^1(B_R)} \lesssim \norme{F_\eps}_{L^2(D)} + \norme{G_\eps}_{L^2(D)}.    
\end{equation}

\end{prop}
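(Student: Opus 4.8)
The plan is to obtain \eqref{eq:Zepsveps} by directly subtracting the boundary-corrector problem \eqref{eq:veps} from the two-scale error problem \eqref{eq:Zeps}, and then to read off both well-posedness and the estimate \eqref{eq:H1err} from the stability of Proposition~\ref{prop:stabAbsorb_smallKR}. First I would go through the five lines of \eqref{eq:Zeps} and \eqref{eq:veps} in parallel. In $B_R\setminus\overline{D}$ both $Z_\eps$ and $v_\eps$ solve $-\Delta\,\cdot\,-k^2 n_0\,\cdot\,=0$, and since $a_\eps=Id$, $n_\eps=n_0$ there and $F_\eps,G_\eps$ are supported in $D$, the difference satisfies the exterior restriction of the first line of \eqref{eq:Zepsveps}. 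Inside $D$, $v_\eps$ solves the homogeneous equation while $Z_\eps$ carries the source $\nabla\cdot F_\eps + k^2 G_\eps$, so $Z_\eps-v_\eps$ inherits exactly this right-hand side.

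The decisive point is the behaviour across $\partial D$. The corrector $v_\eps$ was constructed precisely so that its Dirichlet jump $v_\eps^- - v_\eps^+ = \eps u_{1,\eps}$ matches that of $Z_\eps$, and so that the two $\eps$-order flux terms $\eps\sum_{i=1}^d(\nabla\cdot(\sigma_i^\eps\partial_i u_0)^+)\cdot\nu - k^2\eps(\beta^\eps u_0)^+\cdot\nu$ coincide with those appearing in the flux jump of $Z_\eps$. Subtracting therefore annihilates the Dirichlet jump, giving $(Z_\eps-v_\eps)^- = (Z_\eps-v_\eps)^+$ on $\partial D$, so that $Z_\eps-v_\eps\in H^1(B_R)$ with no interface discontinuity, while the flux jump collapses to the single residual term $\nabla(Z_\eps-v_\eps)^-\cdot\nu - a_\eps\nabla(Z_\eps-v_\eps)^+\cdot\nu = F_\eps\cdot\nu$.

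The key step, and the part that needs the most care, is to recognize that this residual interface flux $F_\eps\cdot\nu$, together with the volume source, is exactly encoded by the single distributional equation on all of $B_R$ in \eqref{eq:Zepsveps}. Extending $F_\eps$ by zero outside $D$, the distributional divergence of $\mathbbm{1}_D F_\eps$ carries a surface contribution $-(F_\eps\cdot\nu)\,\delta_{\partial D}$, with $\nu$ the outward normal of $D$. I would verify the equivalence by testing against an arbitrary $\varphi\in H^1(B_R)$ and integrating by parts separately in $D$ and in $B_R\setminus\overline{D}$: the two $\partial D$ boundary integrals combine, thanks to the continuity of $Z_\eps-v_\eps$, into $\int_{\partial D}\left(\nabla(Z_\eps-v_\eps)^-\cdot\nu - a_\eps\nabla(Z_\eps-v_\eps)^+\cdot\nu\right)\bar\varphi$, which by the flux-jump identity equals $\int_{\partial D}(F_\eps\cdot\nu)\bar\varphi$ and cancels exactly against the surface term produced by $\nabla\cdot(\mathbbm{1}_D F_\eps)$. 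What remains is precisely the weak form $\int_{B_R} a_\eps\nabla(Z_\eps-v_\eps)\cdot\nabla\bar\varphi - k^2\int_{B_R} n_\eps (Z_\eps-v_\eps)\bar\varphi - \int_{\partial B_R}\Lambda(Z_\eps-v_\eps)\bar\varphi = -\int_D F_\eps\cdot\nabla\bar\varphi + k^2\int_D G_\eps\bar\varphi$ of \eqref{eq:Zepsveps}. Here the continuity established in the previous step is exactly what licenses the use of a single global test space $H^1(B_R)$ rather than the transmission framework.

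Finally, uniqueness in $H^1(B_R)$ and the estimate \eqref{eq:H1err} follow from Proposition~\ref{prop:stabAbsorb_smallKR}. The right-hand side $\nabla\cdot F_\eps + k^2 G_\eps$ defines an element of the dual of $H^1(B_R)$ whose norm is controlled by $\norme{F_\eps}_{L^2(D)} + \norme{G_\eps}_{L^2(D)}$, using $\langle\nabla\cdot F_\eps,\varphi\rangle = -\int_D F_\eps\cdot\nabla\bar\varphi$ so that $\norme{\nabla\cdot F_\eps}_{H^{-1}(B_R)}\leq\norme{F_\eps}_{L^2(D)}$, together with the embedding $k^2 G_\eps\in L^2(B_R)\hookrightarrow H^{-1}(B_R)$ giving $\norme{k^2 G_\eps}_{H^{-1}(B_R)}\lesssim\norme{G_\eps}_{L^2(D)}$. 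Since $F_\eps\in H^1(D)$ and $G_\eps\in H^1(D)$ by \eqref{Feps}--\eqref{Geps} and Lemma~\ref{lem:twoScError}, these norms are finite, and the coercivity of the sesquilinear form associated with \eqref{eq:Zepsveps} yields the claimed bound, uniformly in $\eps$ and $\omega$.
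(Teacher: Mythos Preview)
Your proof is correct and follows exactly the approach the paper intends: subtract \eqref{eq:veps} from \eqref{eq:Zeps} to see the transmission jumps cancel except for the residual $F_\eps\cdot\nu$, recognize that this is precisely the weak form of \eqref{eq:Zepsveps} with $F_\eps$ extended by zero, and invoke Proposition~\ref{prop:stabAbsorb_smallKR} (via the coercivity assumption) for well-posedness and the stability bound. The paper itself gives only the one-line justification ``Once again, we apply the Proposition~\ref{prop:stabAbsorb_smallKR} for the well-posedness of \eqref{eq:Zepsveps}''; you have simply supplied the details it leaves implicit.
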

Once again, we apply the Proposition~\ref{prop:stabAbsorb_smallKR} for the well-posedness of \eqref{eq:Zepsveps}. \\
In order to quantify the convergence of $u_\eps$ towards $u_0 + \eps u_{1, \eps} + v_\eps$, we are now left with estimating the right hand side of \eqref{eq:H1err}. This is easily done with the a.s. corrector estimates established in \cite[Theorem~2]{Gloria2019} for coefficients verifying the mixing Hypothesis~\ref{hyp:mix_hyp} as we show in the next section.

\subsection{Convergence rate of the two-scale expansion} \label{subsec:homThm}
In this section we estimate the convergence of the two-scale expansion error with and without the boundary layer $v_\eps$ both in $H^1$- and $L^2$-norms. \\
As mentioned above, the proof relies on the corrector bounds established in \cite[Theorem~4]{Gloria2019} for correlated fields satisfying the Hypothesis~\ref{hyp:mix_hyp}. We recall these results below. 
\begin{prop}[Corrector bounds] \label{prop:corrbounds}
Under the mixing Hypothesis~\ref{hyp:mix_hyp}, 
\begin{enumerate}[label=(\alph*)]
\item  ($\Pro-a.s.$ corrector bound): \\
There exists an a.s. finite (non-stationary) random field $x \mapsto \mathcal{C}(x)$ such that for all $x \in \R^d$,
\begin{equation} \label{eq:corrector_bounds}
\left(\fint_{\square_x}|\phi|^2 + |\sigma|^2 + |\beta|^2 \right)^{\frac{1}{2}} \leq \mathcal{C}(x) \mu_d(|x|),
\end{equation}

where for all $y \in \R^+$,
 \begin{equation} \label{eq:mu_d}
\mu_d(y) = \left\{
\begin{aligned}
& \sqrt{y} && \textrm{if } d = 1,\\
& |\log(2 + y)|^{\frac{1}{2}}&& \textrm{if } d = 2, \\
& 1 && \textrm{if } d = 3.
\end{aligned} \right.
\end{equation}

\item (Corrector bound in average): \\
Furthermore, for all $y \in \R^d$, $\mathcal{C}(y)$ satisfies the following stochastic integrability
\begin{equation} \label{eq:Exp_corrector_bounds}
\E[\exp(\frac{1}{C}\mathcal{C}(y)^\gamma)] \leq 2
\end{equation}
for some constant $C >0$ depending on $d$, $n_0$, $n_M$, $n_S$, $a_M$, $a_S$ and exponent $\gamma > 0$ depending on $d$ and the exponential decay rate of $\pi$.
\item (Mean-value property): \\
There exists a stationary $\frac{1}{8}$-Lipschitz continuous random field $r_*>1$ (the so-called minimal radius) satisfying \eqref{eq:Exp_corrector_bounds} such that for all $\ell \geq 1$
\begin{equation} \label{eq:grad_corr}
\int_{B_\ell(x)} |\nabla \phi|^2 \lesssim (\ell + r_*(x))^d.
\end{equation}
\end{enumerate}
\end{prop}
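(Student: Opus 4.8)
The plan is to recognize that this proposition merely recollects the quantitative corrector estimates of \cite[Theorems~2 and~4]{Gloria2019}, so the core of the argument is to verify that our random coefficient field falls within the scope of that theory and then invoke it. First I would check the well-posedness of $\sigma$ and $\beta$: the right-hand sides $\partial_j q_{im}-\partial_m q_{ij}$ and $\partial_i(n-n^{hom})$ are stationary with vanishing expectation (since $a^{hom}_{ij}=\E[e_j\cdot q_i]+a^{hom}_{ij}$ is defined precisely so that $\E[q_i]=0$, and $n^{hom}=\E[n]$), which places them, like $\phi$, in the standard corrector framework for $-\nabla\cdot a\nabla$ and the constant-coefficient Laplacian. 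The key structural point is that the coefficient pair $(a,n)$ is a \emph{local and bounded} function of the hardcore set $S$: the value of $a_\eps,n_\eps$ on a ball $B_\ell(x)$ depends only on $S\cap B_\ell(x)$ (up to the separation $\delta$), so the oscillation $\partial^{osc}_{S,B_\ell(x)}$ of any observable of $(a,n)$ is dominated by that of $S$. Hence the multiscale variance inequality \eqref{eq:decorr} of Hypothesis~\ref{hyp:mix_hyp}, together with its logarithmic-Sobolev strengthening (available for hardcore processes with exponentially decaying weight $\pi$ by \cite[Section~3]{DuerinckxGloria2017}), transfers from $S$ to $(a_\eps,n_\eps)$, which is exactly the input required in \cite{Gloria2019}.

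For the substantive mechanism behind parts (a)--(b), were one to reprove rather than cite, I would proceed in two stages. The qualitative stage uses stationarity of $\nabla\phi,\nabla\sigma,\nabla\beta$ together with the corrector equations to obtain finite second moments $\E[|\nabla\phi|^2],\E[|\nabla\sigma|^2],\E[|\nabla\beta|^2]<\infty$. The quantitative stage then writes the spatial average $\fint_{\square_x}|\phi|^2$ (and the analogues for $\sigma,\beta$) as an $\sigma(S)$-measurable functional $F(S)$, estimates its oscillation $\partial^{osc}_{S,B_\ell(x)}F$ by differentiating the corrector equation with a localized perturbation of the data and using deterministic energy/Green's-function bounds, and finally feeds this sensitivity estimate into the (logarithmic-Sobolev form of the) functional inequality to produce the stretched-exponential integrability \eqref{eq:Exp_corrector_bounds}. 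The dimension-dependent growth $\mu_d$ in \eqref{eq:mu_d} arises from the decay of the (annealed) Green's function: the corrector is stationary and $L^2$-bounded for $d=3$, grows like $|\log|^{1/2}$ at the critical dimension $d=2$, and like $\sqrt{\,\cdot\,}$ for $d=1$; combined with the moment bounds, this yields the almost-sure pointwise bound with the random prefactor $\mathcal{C}(x)$.

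Part (c), the mean-value (minimal-radius) property, I would obtain from the large-scale regularity theory in the spirit of \cite{gloria2014regularity,Gloria2019}. One defines $r_*(x)$ as the smallest scale $r\ge 1$ beyond which the rescaled gradient energy $r^{-d}\int_{B_r(x)}(1+|\nabla\phi|^2)$ stays below a fixed threshold; stationarity and the $\frac18$-Lipschitz regularity of $r_*$ are built into this definition, and its stretched-exponential integrability follows from the moment bounds of stage two via a one-step excess-decay (Campanato) iteration. The estimate \eqref{eq:grad_corr} is then immediate: for $\ell\ge r_*(x)$ the average is of order one by construction, while for $\ell<r_*(x)$ one simply bounds $\int_{B_\ell(x)}|\nabla\phi|^2\le\int_{B_{r_*(x)}(x)}|\nabla\phi|^2\lesssim r_*(x)^d$.

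The main obstacle is the upgrade from the variance inequality \eqref{eq:decorr} to genuine stretched-exponential concentration \eqref{eq:Exp_corrector_bounds}: a bare variance inequality yields only algebraic moments, so one must exploit the logarithmic-Sobolev version of the multiscale inequality and carefully track the localized sensitivity of the correctors. A second delicate point, absent in the smooth-coefficient literature, is that $(a_\eps,n_\eps)$ is a \emph{discontinuous} and non-differentiable function of the randomness; this is precisely why the \emph{oscillation} derivative $\partial^{osc}_{S,B_\ell(x)}$ (rather than a classical functional derivative) is used throughout, and verifying that the sensitivity of the corrector is controlled in this oscillation sense is the technical heart of the transfer to \cite{Gloria2019}.
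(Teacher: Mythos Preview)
Your proposal is correct and aligned with the paper's treatment: the paper does not give an independent proof of this proposition but simply states it as a recall of \cite[Theorem~4]{Gloria2019} (introduced with ``We recall these results below''). Your additional discussion of how to verify that the coefficient field $(a,n)$ inherits the multiscale variance inequality from $S$, and your sketch of the sensitivity/large-scale-regularity mechanism behind the cited theorems, goes well beyond what the paper itself provides, but is accurate and appropriate context.
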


For any bounded domain $B\subset\R^d$, we consider the covering of $B$ with squares of size $\eps$ and define $P_\eps(B)$ as the set of centers of those squares, \emph{i.e.}
$$P_\eps(B) := \{ x \in \mathbb{Z}^d, \eps \square_x \cap B \neq \emptyset \} .$$
We prove the following estimate for the two-scale expansion error with the boundary layer in $H^1(B_R)$.
\begin{prop}[$H^1$- convergence of the two-scale expansion with the boundary corrector] \label{prop:2scErrDecayBC}
Let $u_\eps \in H^1(B_R)$ be the a.s. solution of \eqref{eq:mainBR} and ${u_0 \in H^1(B_R)}$ such that ${{u_0}_{|_D} \in W^{2, \infty}(D)}$ be the solution of \eqref{eq:u0BR}. Let $u_{1,\eps}$ be the corrector defined by \eqref{eq:u1} and $v_\eps$ be the boundary corrector solution of \eqref{eq:veps}. \\ 
Then a.s.
\begin{equation}
\norme{u_\eps - u_0 - \eps u_{1,\eps} - v_\eps}_{H^1(B_R)} \lesssim \eps \mu_d(\frac{1}{\eps}) \chi_\eps \norme{u_0}_{W^{2, \infty}(D)},
\end{equation}
where $\mu_d$ is defined in Proposition~\ref{prop:corrbounds} and $\chi_\eps$ is the random variable defined as: 
\begin{equation}
\chi_\eps := \left(\eps^d \sum_{z \in P_\eps(D)}  \mathcal{C}(z)^2 \right)^{\frac{1}{2}},
\end{equation}
with $\mathcal{C}$ also defined in Proposition~\ref{prop:corrbounds}. 
In particular, $\chi_\eps$ satisfies the stochastic integrability \eqref{eq:Exp_corrector_bounds} and it holds
\begin{equation}
\E\left[\norme{u_\eps - u_0 - \eps u_{1,\eps} - v_\eps}_{H^1(B_R)}^2\right]^{\frac{1}{2}} \lesssim \eps \mu_d(\frac{1}{\eps}) \norme{u_0}_{W^{2, \infty}(D)}.
\end{equation}

\end{prop}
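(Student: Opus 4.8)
The statement is exactly an estimate on $Z_\eps - v_\eps = u_\eps - u_0 - \eps u_{1,\eps} - v_\eps$, so the plan is to start from the a priori bound \eqref{eq:H1err} of Proposition~\ref{prop:twoscerrBC}, which gives $\norme{u_\eps - u_0 - \eps u_{1,\eps} - v_\eps}_{H^1(B_R)} \lesssim \norme{F_\eps}_{L^2(D)} + \norme{G_\eps}_{L^2(D)}$. Everything then reduces to estimating the $L^2(D)$-norms of the right-hand sides $F_\eps$ and $G_\eps$ of \eqref{Feps}--\eqref{Geps}.

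First I would separate the macroscopic and microscopic scales. Since ${u_0}_{|_D} \in W^{2,\infty}(D)$ and the coefficients $a_\eps$, $n_\eps$ are bounded, each factor $\nabla(\partial_i u_0)$, $\partial_i u_0$ and $u_0$ occurring in \eqref{Feps}--\eqref{Geps} is controlled by $\norme{u_0}_{W^{2,\infty}(D)}$, so that pointwise a.e.\ in $D$
\[
|F_\eps| + |G_\eps| \lesssim \eps\, \norme{u_0}_{W^{2,\infty}(D)} \left( |\phi^\eps| + |\sigma^\eps| + |\beta^\eps| \right),
\]
and therefore $\norme{F_\eps}_{L^2(D)} + \norme{G_\eps}_{L^2(D)} \lesssim \eps\, \norme{u_0}_{W^{2,\infty}(D)} \big( \norme{\phi^\eps}_{L^2(D)} + \norme{\sigma^\eps}_{L^2(D)} + \norme{\beta^\eps}_{L^2(D)} \big)$.

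The core step is to bound $\norme{\phi^\eps}_{L^2(D)}^2 + \norme{\sigma^\eps}_{L^2(D)}^2 + \norme{\beta^\eps}_{L^2(D)}^2$ by means of the covering $P_\eps(D)$. Covering $D$ by the tiles $\eps\square_z$, $z \in P_\eps(D)$, and rescaling $y = x/\eps$ turns $\int_D |\phi^\eps|^2$ into $\eps^d \sum_{z \in P_\eps(D)} \fint_{\square_z} |\phi|^2$ (using $|\square_z| = 1$), and similarly for $\sigma$ and $\beta$. Applying the almost sure corrector bound \eqref{eq:corrector_bounds} gives $\fint_{\square_z}(|\phi|^2 + |\sigma|^2 + |\beta|^2) \le \mathcal{C}(z)^2 \mu_d(|z|)^2$. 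Since $D$ is bounded, every $z \in P_\eps(D)$ satisfies $|z| \lesssim 1/\eps$, so the monotonicity of $\mu_d$ together with its explicit form \eqref{eq:mu_d} yields $\mu_d(|z|) \lesssim \mu_d(1/\eps)$ uniformly in $z$. Summing produces $\norme{\phi^\eps}_{L^2(D)} + \norme{\sigma^\eps}_{L^2(D)} + \norme{\beta^\eps}_{L^2(D)} \lesssim \mu_d(1/\eps)\, \chi_\eps$, and chaining the three inequalities with \eqref{eq:H1err} gives the claimed almost sure estimate.

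It remains to treat the probabilistic statements. For the $L^2(\Omega)$ bound I would use only that the stochastic integrability \eqref{eq:Exp_corrector_bounds} of each $\mathcal{C}(z)$ forces $\E[\mathcal{C}(z)^2] \lesssim 1$ uniformly in $z$; hence $\E[\chi_\eps^2] = \eps^d \sum_{z \in P_\eps(D)} \E[\mathcal{C}(z)^2] \lesssim \eps^d\, |P_\eps(D)| \lesssim |D| \lesssim 1$, and taking the $L^2(\Omega)$-norm of the almost sure estimate gives the rate $\eps\, \mu_d(1/\eps)\, \norme{u_0}_{W^{2,\infty}(D)}$. The main obstacle is rather the assertion that $\chi_\eps$ itself satisfies \eqref{eq:Exp_corrector_bounds}: since $\chi_\eps^2$ is, up to the bounded factor $\eps^d |P_\eps(D)| \lesssim |D|$, the average of the $\mathcal{C}(z)^2$, one must transfer the stretched-exponential moment from the individual $\mathcal{C}(z)$ to this average. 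This is immediate by Jensen when the integrability exponent $\gamma$ is large enough for $t \mapsto \exp(t^{\gamma/2}/C)$ to be convex, but for small $\gamma$ the function is no longer convex and one must instead invoke the standard averaging rule for stretched-exponential (Orlicz-type) stochastic integrability from the quantitative homogenization literature; the non-stationarity of $\mathcal{C}$ also rules out a direct ergodic-averaging shortcut, so the uniform-in-$z$ moment bounds \eqref{eq:Exp_corrector_bounds} are precisely what make the argument go through.
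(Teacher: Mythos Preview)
Your proof is correct and follows essentially the same approach as the paper: reduce via \eqref{eq:H1err} to the $L^2(D)$-norms of $F_\eps$ and $G_\eps$, factor out $\norme{u_0}_{W^{2,\infty}(D)}$, cover $D$ by the $\eps\square_z$, rescale, and apply the corrector bound \eqref{eq:corrector_bounds} together with $\mu_d(|z|)\lesssim\mu_d(1/\eps)$ for $z\in P_\eps(D)$. The only difference is in the handling of the stochastic integrability of $\chi_\eps$: the paper dispatches this in one line by citing \cite[Lemma~A4]{armstrong2019quantitative}, which is precisely the ``averaging rule for stretched-exponential moments'' you correctly identify as the needed ingredient.
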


\begin{rem}
This result is an equivalent of the result obtained in \cite[Chapter~6]{armstrong2019quantitative}, both for Dirichlet and Neumann boundary conditions on $\partial D$. The proof of Proposition~\ref{prop:2scErrDecayBC} follows similar steps as the proofs in \cite{armstrong2019quantitative}, that dealt with the case ${u_0}_{|_D} \in W^{2, \infty}(D)$. \\
In Appendix~\ref{appendix:homLessReg}, Proposition~\ref{prop:2scErrDecayBClessRegular}, we extend the result of the proposition to the case where ${u_0}_{|_D} \in W^{1 + \alpha, p}(D)$ for $\alpha \in (0,1]$ and $p \in (2, \infty]$.  It will be needed in the proof of Proposition~\ref{prop:2scErrDecayL2}. However, for the sake of simplicity, we choose to display here the proof in the more regular setting as it relies on the same ideas but requires less technicity. \\
Note that ${u_0}_{|_D}$ is indeed in $W^{2, \infty}(D)$. By Proposition~\ref{prop:regHk}, since $D$ has a $\mathcal{C}^4$- boundary, and $u^{inc} \in \mathcal{C}^\infty(B_R)$, ${u_0}_{|_D}$ is in $H^4(D)$. By \cite[Corollary~9.15]{brezis2011functional}, one has the embedding $H^4(D) = W^{4, 2}(D) \hookrightarrow \mathcal{C}^2(\overline{D})$. 
\end{rem}

\begin{proof}
By \eqref{eq:H1err}, one only needs to show that a.s.
$$\norme{F_\eps}_{L^2(D)} + \norme{G_\eps}_{L^2(D)}  \lesssim \eps \mu_d(\frac{1}{\eps}) \chi_\eps \norme{u_0}_{W^{2, \infty}(D)}. $$
The definition of $F_\eps$ by \eqref{Feps} and $G_\eps$ by \eqref{Geps} implies that a.s.
\begin{equation}
\begin{split}
\norme{F_\eps}_{L^2(D)} + \norme{G_\eps}_{L^2(D)} & \lesssim \eps \left(\norme{ \phi^\eps }_{L^2(D)} + \norme{\sigma^\eps }_{L^2(D)} + \norme{ \beta^\eps}_{L^2(D)}\right) \norme{u_0}_{W^{2, \infty}(D)}.
\end{split}
\end{equation}
It then suffices to bound a.s. the norm of each corrector on the right hand side by $\mu_d(\frac{1}{\eps})$ (up to a constant) to obtain the desired estimate. \\
We prove the bound for $\norme{\phi^\eps}_{L^2(D)}$. The two other estimates for $\norme{\beta^\eps}_{L^2(D)}$ and $\norme{\sigma^\eps}_{L^2(D)}$ are established in a similar manner. \\
We pave $D$ with squares of size $\eps$, change of variable and use the a.s. corrector bounds \eqref{eq:corrector_bounds} to obtain
\begin{equation}
\begin{split}
\norme{\phi^\eps}_{L^2(D)}^2 & \lesssim \sum_{z \in P_\eps(D)} \norme{ \phi^\eps }_{L^2(\eps \square_z)}^2 \\
& = \sum_{z \in P_\eps(D)} \int_{\eps \square_z}|\phi(\frac{\cdot}{\eps})|^2 \\
& \lesssim \eps^d \sum_{z \in P_\eps(D)} \fint_{\square_z}|\phi|^2 \\
& \lesssim \mu_d(\frac{1}{\eps})^2 \left(\eps^d \sum_{z \in P_\eps(D)} \mathcal{C}(z)^2 \right) \\
& \lesssim \mu_d(\frac{1}{\eps})^2 \chi_\eps^2.
\end{split}
\end{equation}
By \cite[Lemma~A4]{armstrong2019quantitative}, $\chi_\eps$ satisfies \eqref{eq:Exp_corrector_bounds} which concludes our proof.
\end{proof}

The boundary corrector $v_\eps$, defined by \eqref{eq:veps} solves an a.s. comparable problem as the one verified by $u_\eps$ in $D$, with an oscillatory boundary data on $\partial D$. The resulting complexity drives us to derive convergence rates of the two-scale expansion error without $v_\eps$. We start with the estimate in the $H^1$-norm.
\begin{prop}[$H^1$- convergence rate of the two-scale expansion] \label{prop:2scErrDecay}
Let $u_\eps \in H^1(B_R)$ be the solution of \eqref{eq:mainBR}, ${u_0 \in H^1(B_R)}$ such that ${{u_0}_{|_D} \in W^{2, \infty}(D)}$ be the solution of \eqref{eq:u0BR} and $u_{1,\eps}$ be defined by \eqref{eq:u1}. \\ 
Then, it holds a.s.
\begin{equation} \label{eq:errH1}
\norme{u_\eps - u_0}_{H^1(B_R \setminus\overline{D})} + \norme{u_\eps - u_0 - \eps u_{1,\eps}}_{H^1(D)} \lesssim \eps^{\frac{1}{2}} \mu_d(\frac{1}{\eps})^{\frac{1}{2}} \widehat{\chi_\eps} \norme{u_0}_{W^{2, \infty}(D)},
\end{equation}
where $\widehat{\chi_\eps}$ is a random variable that satisfies the stochastic integrability \eqref{eq:Exp_corrector_bounds}. In particular 
\begin{equation}
\E\left[\norme{u_\eps - u_0}_{H^1(B_R \setminus\overline{D})}^2\right]^{\frac{1}{2}}+\E\left[\norme{u_\eps - u_0 - \eps u_{1,\eps}}_{H^1(D)}^2\right]^{\frac{1}{2}} \lesssim \eps^{\frac{1}{2}} \mu_d(\frac{1}{\eps})^{\frac{1}{2}} \norme{u_0}_{W^{2, \infty}(D)}.
\end{equation}

\end{prop}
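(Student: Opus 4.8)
The plan is to reduce the statement to an estimate on the boundary corrector $v_\eps$ and to invoke Proposition~\ref{prop:2scErrDecayBC}. First I would note that, since $u_{1,\eps}$ vanishes in $B_R\setminus\overline D$, the left-hand side of \eqref{eq:errH1} is exactly $\norme{Z_\eps}_{H^1(B_R\setminus\overline D)}+\norme{Z_\eps}_{H^1(D)}$ with $Z_\eps=u_\eps-w_\eps$. Writing $Z_\eps=(Z_\eps-v_\eps)+v_\eps$ and using the triangle inequality, Proposition~\ref{prop:2scErrDecayBC} already controls $\norme{Z_\eps-v_\eps}_{H^1(B_R)}$ by $\eps\,\mu_d(\tfrac1\eps)\chi_\eps\norme{u_0}_{W^{2,\infty}(D)}$, which for small $\eps$ is of lower order than the target. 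Hence everything reduces to proving
\begin{equation*}
\norme{v_\eps}_{H^1(B_R\setminus\overline D)\times H^1(D)} \lesssim \eps^{1/2}\mu_d(\tfrac1\eps)^{1/2}\,\widehat{\chi_\eps}\,\norme{u_0}_{W^{2,\infty}(D)},
\end{equation*}
which produces the dominant rate.

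To bound $v_\eps$ I would remove its jump with a boundary-layer lift. Let $\theta_\ell$ be a cutoff equal to $1$ on $\partial D$, supported in the inner layer $\{x\in D : \mathrm{dist}(x,\partial D)<\ell\}$ of width $\ell$ to be optimized, with $|\nabla\theta_\ell|\lesssim\ell^{-1}$, and set $L_\eps:=-\eps\,u_{1,\eps}\,\theta_\ell$ in $D$ and $L_\eps:=0$ in $B_R\setminus\overline D$. By construction $L_\eps^--L_\eps^+=\eps u_{1,\eps}$ on $\partial D$, so $\widetilde v_\eps:=v_\eps-L_\eps$ has no jump, belongs to $H^1(B_R)$, and by \eqref{eq:veps} solves a transmission-free problem of the type \eqref{eq:Zepsveps} with source $\nabla\cdot(a_\eps\nabla L_\eps)+k^2 n_\eps L_\eps$ together with the residual flux jump on $\partial D$. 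The coercivity Proposition~\ref{prop:stabAbsorb_smallKR} then yields
\begin{equation*}
\norme{\widetilde v_\eps}_{H^1(B_R)} \lesssim \norme{\nabla L_\eps}_{L^2(D)} + \norme{L_\eps}_{L^2(D)} + \Big\|\,\eps\sum_{i=1}^d \nabla\cdot(\sigma_i^\eps\partial_i u_0)^+\cdot\nu - k^2\eps(\beta^\eps u_0)^+\cdot\nu\,\Big\|_{H^{-1/2}(\partial D)},
\end{equation*}
and since $v_\eps=\widetilde v_\eps+L_\eps$, it suffices to control the right-hand side together with $\norme{L_\eps}_{H^1}$.

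Then I would estimate each piece on the thin layer. Expanding $\nabla L_\eps=-\eps(\nabla u_{1,\eps})\theta_\ell-\eps u_{1,\eps}\nabla\theta_\ell$ and using $\eps\nabla u_{1,\eps}=\sum_i(\nabla\phi_i)(\tfrac\cdot\eps)\partial_i u_0+\eps\,\phi_i^\eps\nabla\partial_i u_0$, a change of variables with the gradient corrector bound \eqref{eq:grad_corr} and the pointwise bound \eqref{eq:corrector_bounds} gives
\begin{equation*}
\norme{\nabla L_\eps}_{L^2(D)}^2 \lesssim \Big(\ell + \frac{\eps^2\mu_d(\tfrac1\eps)^2}{\ell}\Big)\,\widehat{\chi_\eps}^2\,\norme{u_0}_{W^{2,\infty}(D)}^2,
\end{equation*}
since the layer has volume $\sim\ell$ and carries $O(1)$ averages of $|\nabla\phi(\tfrac\cdot\eps)|^2$ and $O(\mu_d^2)$ averages of $|\phi(\tfrac\cdot\eps)|^2$. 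Choosing $\ell\sim\eps\mu_d(\tfrac1\eps)$ balances the two terms and yields $\norme{\nabla L_\eps}_{L^2(D)}\lesssim\eps^{1/2}\mu_d(\tfrac1\eps)^{1/2}\widehat{\chi_\eps}\norme{u_0}_{W^{2,\infty}(D)}$, while $\norme{L_\eps}_{L^2}\lesssim\eps\mu_d\widehat{\chi_\eps}\norme{u_0}_{W^{2,\infty}(D)}$ is of lower order. The random factor $\widehat{\chi_\eps}$ collecting the local corrector averages over the layer (and the minimal radius $r_*$ from \eqref{eq:grad_corr}) satisfies the stochastic integrability \eqref{eq:Exp_corrector_bounds} by the same lemma used in Proposition~\ref{prop:2scErrDecayBC}, so taking expectations produces the clean rate.

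The hardest step will be the flux-jump boundary term. Here $\eps\,\nabla\cdot\sigma_i^\eps=q_i(\tfrac\cdot\eps)$ is only $O(1)$ on $\partial D$, so a direct estimate fails. I expect the decisive point to be the skew-symmetry of $\sigma$: rewriting $\sum_{j,m}\nu_j\partial_m(\sigma_{i,jm}^\eps\partial_i u_0)$ as a tangential (surface) divergence on the closed surface $\partial D$ and integrating by parts tangentially, one trades the $\eps^{-1}$ blow-up of $\nabla\sigma^\eps$ for the corrector $\eps\sigma_i^\eps$ itself (of size $\eps\mu_d$) paired with tangential derivatives of the trace of the test function; combined with a trace estimate this brings the flux term to the same order $\eps^{1/2}\mu_d(\tfrac1\eps)^{1/2}$ (or better), the $\beta$-contribution being directly $O(\eps\mu_d)$. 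Controlling this surface term and bundling all the random corrector averages over the layer into a single $\widehat{\chi_\eps}$ with the correct stochastic integrability is the technical core of the argument; once done, collecting the three contributions and absorbing the lower-order term from Proposition~\ref{prop:2scErrDecayBC} concludes the proof, and taking expectations gives the averaged estimate.
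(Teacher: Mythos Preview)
Your strategy is the same as the paper's: reduce to bounding $v_\eps$, lift the trace jump with a cutoff supported in a layer of width $\ell\sim\eps\mu_d(\tfrac1\eps)$, and use coercivity to bound the resulting $H^1(B_R)$ problem; the layer estimates for $\norme{\nabla L_\eps}_{L^2}$ and the choice of $\ell$ are exactly right.

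The one place where you diverge from the paper is the handling of the residual flux jump $\eps\sum_i\nabla\cdot(\sigma_i^\eps\partial_i u_0)^+\cdot\nu$. You propose to recognise it as a tangential surface divergence and integrate by parts along $\partial D$; this can be made to work but requires some surface calculus and trace estimates you do not spell out. The paper's argument is slicker and avoids the boundary altogether: since the cutoff equals $1$ on $\partial D$, one has $\nabla\cdot(\sigma_i^\eps\partial_i u_0)^+\cdot\nu=\nabla\cdot(\sigma_i^\eps\partial_i u_0\,\eta_\eps)^+\cdot\nu$, and the skew-symmetry of $\sigma_i$ gives $\nabla\cdot\bigl(\nabla\cdot(\sigma_i^\eps\partial_i u_0\,\eta_\eps)\bigr)=0$ in $D$. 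Hence in the variational formulation the boundary pairing against $\overline{w}$ becomes, by the divergence theorem, the volume term $-\eps\int_D\nabla\cdot(\sigma_i^\eps\partial_i u_0\,\eta_\eps)\cdot\nabla\overline{w}$, which is supported in the layer and controlled exactly like the other corrector terms (contributing $\eps\|\nabla\sigma^\eps\|_{L^2(\mathcal S_{\eta_\eps})}$, $\eps\|\sigma^\eps\|_{L^2(\mathcal S_{\eta_\eps})}$ and $\mu_d^{-1}\|\sigma^\eps\|_{L^2(\mathcal S_{\eta_\eps})}$). This converts what you flag as ``the hardest step'' into a one-line algebraic identity, and the $\beta$ term is handled the same way. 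Everything else in your outline matches the paper.
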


In the rest of the article $\widehat{\chi_\eps}$ denotes a random variable satisfying the stochastic integrability \eqref{eq:Exp_corrector_bounds}. Its expression in the specific estimate \eqref{eq:errH1} is made explicit in the proof. 

\begin{proof}
Thanks to Proposition~\ref{prop:2scErrDecayBC}, we only need to estimate the norm of $v_\eps$ in $H^1(B_R~\setminus~\overline{D})~\times~H^1(D)$, and the conclusion follows by the triangle inequality. \\
We lift the trace jump of $v_\eps$ across $\partial D$ by considering 
$\tilde{v}_\eps := v_\eps - \eps \eta_\eps u_{1,\eps} $ where $\eta_\eps$ is a smooth cutoff satisfying for all $x \in B_R$
$$0 \leq \eta_\eps(x) \leq 1, \hspace{0.2cm} \eta_\eps = 0 \hspace{0.2cm} \textrm{in } D_{2 \eps \mu_d(\frac{1}{\eps})}, \hspace{0.2cm} \eta_\eps = 1\, \textrm{in } B_R \setminus \overline{D_{\eps \mu_d(\frac{1}{\eps})}}, \hspace{0.2cm} \vert \nabla \eta_\eps(x) \vert \lesssim \frac{1}{\eps \mu_d(\frac{1}{\eps})},$$
where $D_r = \{x \in B_R \, \vert \, dist(x, \partial D) > r \}$ for $r>0$. \\
By Proposition~\ref{prop:stabAbsorb_smallKR}, since $\eta_\eps u_{1, \eps} \in H^1(D)$, $\tilde{v}_\eps$ is a.s. the unique solution in $H^1(B_R)$ to
\begin{equation} \label{eq:tildeVeps}
\left\{\begin{aligned} 
&- \Delta \tilde{v}_\eps - k^2 n_0 \tilde{v}_\eps = 0 \qquad && \textrm{in}\, B_R \setminus \overline{D},\\
&-\nabla\cdot a_\eps \nabla \tilde{v}_\eps - k^2 n_\eps \tilde{v}_\eps = \eps \nabla\cdot a_\eps \nabla (\eta_\eps u_{1, \eps}) + \eps k^2 n_\eps (\eta_\eps u_{1, \eps})\qquad && \textrm{in}\, D,\\
& \nabla \tilde{v}_\eps^- \cdot \nu - a_\eps \nabla \tilde{v}_\eps^+ \cdot \nu = a_\eps \nabla (\eta_\eps u_{1, \eps}) \cdot \nu  \\ & + \eps \sum_{i=1}^d \left(\nabla \cdot(\sigma_i^\eps \partial_i u_0)^+\right) \cdot \nu - k^2 \eps (\beta^\eps u_0)^+ \cdot \nu \qquad  && \textrm{on}\, \partial D, \\
& \nabla \tilde{v}_\eps \cdot \nu = \Lambda(\tilde{v}_\eps)  \qquad && \textrm{on}\, \partial B_R. \\
 \end{aligned}
\right.
\end{equation}
On the boundary it holds that
\begin{equation}
\sum_{i=1}^d \nabla \cdot(\sigma_i^\eps \partial_i u_0)^+) \cdot \nu - k^2 \eps (\beta^\eps u_0)^+ \cdot \nu = \sum_{i=1}^d \nabla \cdot(\sigma_i^\eps \partial_i u_0 \eta_\eps)^+) \cdot \nu - k^2 \eps (\beta^\eps u_0 \eta_\eps)^+ \cdot \nu.
\end{equation}
Furthermore, for all $i \in \intInter{1,d}$, 
$$\nabla \cdot (\nabla \cdot (\sigma_i^\eps \partial_i u_0 \eta_\eps)) = \sum_{j,m=1}^d \partial_{jm} (\sigma_{i,jm}^\eps \partial_i u_0 \eta_\eps)) = 0.$$ 
Thus, $\tilde{v}_\eps$ verifies for all $w \in H^1(B_R)$
\begin{equation} \label{eq:FVtildeVeps}
\begin{split}
\int_{B_R} & a_\eps \nabla \tilde{v}_\eps \cdot \nabla \overline{w}  - k^2 n_\eps \tilde{v}_\eps \overline{w} - \left \langle \Lambda(\tilde{v}_\eps), w \right\rangle_{H^{-\frac{1}{2}}(\partial B_R), H^{\frac{1}{2}}(\partial B_R)}  \\
& =  \int_D -\eps a_\eps \nabla (\eta_\eps u_{1, \eps}) \cdot \nabla \overline{w} + \eps k^2 n_\eps (\eta_\eps u_{1, \eps})\overline{w} - \eps \sum_{i=1}^d \nabla \cdot (\sigma_i^\eps \partial_i u_0 \eta_\eps) \cdot \nabla \overline{w} \\
& \hspace{2cm} - k^2 \eps \nabla \cdot (\beta^\eps u_0 \eta_\eps) \overline{w} + k^2 \eps \beta^\eps u_0 \eta_\eps \cdot \nabla \overline{w},
\end{split}
\end{equation}
where $\overline{w}$ denotes the conjugate of $w$ and $\langle \cdot, \cdot \rangle_{H^{-\frac{1}{2}}(\partial B_R), H^{\frac{1}{2}}(\partial B_R)}$ denotes the duality product $H^{-\frac{1}{2}}(\partial B_R), H^{\frac{1}{2}}(\partial B_R)$. By the coercivity of the sesquilinear form, we then obtain
\begin{equation}
\begin{split}
\norme{\tilde{v}_\eps}_{H^1(B_R)} & \lesssim \eps \norme{\eta_\eps u_{1,\eps}}_{H^1(D)} + \eps \norme{\sum_{i=1}^d \nabla \cdot (\sigma_i^\eps \partial_i u_0 \eta_\eps)}_{L^2(D)} \\
& \hspace{1cm} + \eps \norme{\nabla \cdot (\beta^\eps u_0 \eta_\eps)}_{L^2(D)} + \eps \norme{\beta^\eps u_0 \eta_\eps}_{L^2(D)}. \\
\end{split}
\end{equation}
Let $\mathcal{S}_{\eta_\eps} := \textrm{supp}(\eta_\eps)$ denote the support of $\eta_\eps$. By definition of $\tilde{v}_\eps$, it holds
\begin{equation}
\begin{split}
& \norme{v_\eps}_{H^1(B_R \setminus \overline{D})} + \norme{v_\eps}_{H^1(D)} \\
&  \hspace*{0.7cm} \lesssim \eps \norme{\eta_\eps u_{1, \eps}}_{H^1(D)} + \eps \norme{\sum_{i=1}^d \nabla \cdot (\sigma_i^\eps \partial_i u_0 \eta_\eps)}_{L^2(D)} + \eps \norme{\nabla \cdot (\beta^\eps u_0 \eta_\eps)}_{L^2(D)} + \eps \norme{\beta^\eps u_0 \eta_\eps}_{L^2(D)} \\
&  \hspace*{0.7cm} \lesssim \left(\eps\norme{\nabla \phi^\eps}_{L^2(\mathcal{S}_{\eta_\eps})} + \eps \norme{\phi^\eps}_{L^2(\mathcal{S}_{\eta_\eps})} + \frac{1}{\mu_d(\frac{1}{\eps})}\norme{\phi^\eps}_{L^2(\mathcal{S}_{\eta_\eps})}\right.\\
& \hspace*{1.7cm} + \eps\norme{\nabla \sigma^\eps}_{L^2(\mathcal{S}_{\eta_\eps})} + \eps \norme{\sigma^\eps}_{L^2(\mathcal{S}_{\eta_\eps})} + \frac{1}{\mu_d(\frac{1}{\eps})} \norme{\sigma^\eps}_{L^2(\mathcal{S}_{\eta_\eps})} \\
& \hspace*{1.7cm} \left. + \eps\norme{\nabla \beta^\eps}_{L^2(\mathcal{S}_{\eta_\eps})} + \eps \norme{\beta^\eps}_{L^2(\mathcal{S}_{\eta_\eps})} + \frac{1}{\mu_d(\frac{1}{\eps})}\norme{\beta^\eps}_{L^2(\mathcal{S}_{\eta_\eps})} \right) \norme{u_0}_{W^{2, \infty}(D)}.
\end{split}
\end{equation}
We prove the bounds on the corrector $\phi$, the two other estimates are established in a similar manner. 
Let $\widetilde{\chi_{\eps}^1}$ and $\widetilde{\chi_\eps^2}$ be the random variables defined by
\begin{equation}
\left\{\begin{aligned} 
& \widetilde{\chi_\eps^1} := \left(\frac{\eps^d}{\eps \mu_d(\frac{1}{\eps})} \sum_{z \in P_\eps(\mathcal{S}_{\eta_\eps})} \mathcal{C}(z)^2  \right)^{\frac{1}{2}}, \\
& \widetilde{\chi_\eps^2} := \left(\frac{\eps^d}{\eps \mu_d(\frac{1}{\eps})} \sum_{z \in P_\eps(\mathcal{S}_{\eta_\eps})} (1 + r_*(z))^{2d} \right)^{\frac{1}{2}}.
\end{aligned}
\right.
\end{equation}
These two random variables satisfy the stochastic integrability \eqref{eq:Exp_corrector_bounds} since the prefactor $\frac{\eps^d}{\eps \mu_d(\frac{1}{\eps})} \approx \frac{1}{|P_\eps(\mathcal{S}_{\eta_\eps})|}$ is the appropriate renormalization.
\\
By following the proof of Proposition~\ref{prop:2scErrDecayBC}, it holds a.s.
\begin{equation} \label{eq:normphi}
\begin{split}
\eps\norme{\phi^\eps}_{L^2(\mathcal{S}_{\eta_\eps})} & \lesssim \eps^{\frac{3}{2}} \mu_d(\frac{1}{\eps})^{\frac{3}{2}} \widetilde{\chi_{\eps}^1}.
\end{split}
\end{equation}
It remains to estimate $\eps\norme{\nabla \phi^\eps}_{L^2(\mathcal{S}_{\eta_\eps})}$. Using the mean value property of Proposition~\ref{prop:corrbounds}, we get a.s.
\begin{equation} \label{eq:normnablaphi}
\begin{split}
\eps\norme{\nabla \phi^\eps}_{L^2(\mathcal{S}_{\eta_\eps})} & \lesssim \eps^{\frac{1}{2}} \mu_d(\frac{1}{\eps})^{\frac{1}{2}} \widetilde{\chi_{\eps}^2}.
\end{split}
\end{equation}
By combining \eqref{eq:normphi} and \eqref{eq:normnablaphi}, we obtain
\begin{equation} \label{eq:chi3}
\begin{split}
\eps\norme{\nabla \phi^\eps}_{L^2(\mathcal{S}_{\eta_\eps})} & + \eps \norme{\phi^\eps}_{L^2(\mathcal{S}_{\eta_\eps})} + \frac{1}{\mu_d(\frac{1}{\eps})}\norme{\phi^\eps}_{L^2(\mathcal{S}_{\eta_\eps})} \\
& \lesssim \eps^{\frac{1}{2}}\mu_d(\frac{1}{\eps})^{\frac{1}{2}}\widetilde{\chi_{\eps}^2} + \eps^{\frac{3}{2}}\mu_d(\frac{1}{\eps})^{\frac{3}{2}} \widetilde{\chi_{\eps}^1} + \eps^{\frac{1}{2}}\mu_d(\frac{1}{\eps})^{\frac{1}{2}}\widetilde{\chi_{\eps}^1}  \\
& \lesssim \eps^{\frac{1}{2}} \mu_d(\frac{1}{\eps})^{\frac{1}{2}} \widetilde{\chi_{\eps}^3},
\end{split}
\end{equation}
where we define $\widetilde{\chi_{\eps}^3}$ as
\begin{equation} \label{eq:Chi3}
\widetilde{\chi_{\eps}^3} := \widetilde{\chi_{\eps}^2} + (\eps \mu_d(\frac{1}{\eps}) + 1)\widetilde{\chi_{\eps}^1}.
\end{equation}
As $\widetilde{\chi_{\eps}^1}$ and $\widetilde{\chi_{\eps}^2}$ satisfy the stochastic integrability \eqref{eq:Exp_corrector_bounds} % and $\mu_d(\frac{1}{\eps})^{-1} = O(1)$
, $\widetilde{\chi_{\eps}^3}$ also satisfies \eqref{eq:Exp_corrector_bounds}. \\
This gives us the following estimate for $v_\eps$ in $H^1(B_R \setminus\overline{D}) \times H^1(D)$,
\begin{equation}
\norme{v_\eps}_{H^1(B_R \setminus \overline{D})} + \norme{v_\eps}_{H^1(D)} \lesssim \eps^{\frac{1}{2}}\mu_d(\frac{1}{\eps})^{\frac{1}{2}}\widetilde{\chi_\eps^3} \norme{u_0}_{W^{2, \infty}(D)}.
\end{equation}
Therefore, we conclude by the triangle inequality
\begin{equation}
\begin{split}
\norme{u_\eps - u_0}_{H^1(B_R \setminus\overline{D})} + &  \norme{u_\eps - u_0 - \eps u_{1,\eps}}_{H^1(D)} \\
& \lesssim \norme{u_\eps - u_0 - \eps u_{1,\eps} - v_\eps}_{H^1(B_R)} + \norme{v_\eps}_{H^1(D)} + \norme{v_\eps}_{H^1(B_R \setminus \overline{D})} \\
& \lesssim \eps^{\frac{1}{2}} \mu_d(\frac{1}{\eps})^{\frac{1}{2}}\left(\eps^{\frac{1}{2}}\mu_d(\frac{1}{\eps})^{\frac{1}{2}} \chi_\eps + \widetilde{\chi_{\eps}^3} \right) \norme{u_0}_{W^{2, \infty}(D)} \\
& \lesssim \eps^{\frac{1}{2}} \mu_d(\frac{1}{\eps})^{\frac{1}{2}} \widehat{\chi_\eps} \norme{u_0}_{W^{2, \infty}(D)}, \\
\end{split}
\end{equation}
where the random variable $\widehat{\chi_\eps}$ defined as
\begin{equation}
\widehat{\chi_\eps} := \eps^{\frac{1}{2}}\mu_d(\frac{1}{\eps})^{\frac{1}{2}} \chi_\eps +\widetilde{\chi_{\eps}^2} + (\eps \mu_d(\frac{1}{\eps}) + 1) \widetilde{\chi_{\eps}^1},
\end{equation}
satisfies the stochastic integrability \eqref{eq:Exp_corrector_bounds}. 
\end{proof}

We expect the homogenization error $u_\eps - u_0$ to be of order $O(\eps)$ in $L^2(B_R)$ as $u_0$ verifies the proper transmission conditions on $\partial D$. This is the subject of the next proposition.
\begin{prop}[$L^2$- rate of convergence of the homogenization error] \label{prop:2scErrDecayL2}
Let ${u_\eps \in H^1(B_R)}$ be the a.s. solution of \eqref{eq:mainBR} and $u_0 \in H^1(B_R)$ such that ${{u_0}_{|_D} \in W^{2, \infty}(D)}$ be the solution of \eqref{eq:u0BR}. \\
Then, it holds a.s.
\begin{equation} \label{eq:L2decayPS}
\norme{u_\eps - u_0}_{L^2(B_R)} \lesssim \eps \mu_d(\frac{1}{\eps}) \widehat{\chi_\eps}\norme{u_0}_{W^{2, \infty}(D)},
\end{equation}
where $\widehat{\chi_\eps}$ is a random variable satisfying the stochastic integrability \eqref{eq:Exp_corrector_bounds}. In particular
\begin{equation}
\E \left[\norme{u_\eps  - u_0}_{L^2(B_R)}^2\right]^{\frac{1}{2}} \lesssim \eps \mu_d(\frac{1}{\eps})\norme{u_0}_{W^{2, \infty}(D)}.
\end{equation}

\end{prop}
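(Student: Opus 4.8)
The plan is to reduce everything to the $H^1$-estimate already established in Proposition~\ref{prop:2scErrDecayBC}, together with a single Aubin--Nitsche duality argument carried out for the boundary layer alone. I would start from the decomposition
\[
u_\eps - u_0 = \bigl(u_\eps - u_0 - \eps u_{1,\eps} - v_\eps\bigr) + \eps u_{1,\eps} + v_\eps .
\]
The first term is controlled in $L^2(B_R)$ simply by its $H^1(B_R)$-norm, which is \emph{already} of the target order $\eps\,\mu_d(\frac{1}{\eps})\,\chi_\eps\norme{u_0}_{W^{2,\infty}(D)}$ by Proposition~\ref{prop:2scErrDecayBC}. The second term is handled directly: paving $D$ by $\eps$-cubes and invoking the a.s. corrector bound \eqref{eq:corrector_bounds} exactly as in the proof of Proposition~\ref{prop:2scErrDecayBC} gives $\eps\norme{u_{1,\eps}}_{L^2(D)}\lesssim\eps\,\mu_d(\frac{1}{\eps})\,\chi_\eps\norme{u_0}_{W^{2,\infty}(D)}$. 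Hence the whole difficulty concentrates in the third term, namely in upgrading the $\eps^{1/2}$-rate available for $v_\eps$ in $H^1$ (obtained in the proof of Proposition~\ref{prop:2scErrDecay}) to a full $\eps$-rate in $L^2$, i.e. in proving $\norme{v_\eps}_{L^2(B_R)}\lesssim\eps\,\mu_d(\frac{1}{\eps})\,\widehat{\chi_\eps}\norme{u_0}_{W^{2,\infty}(D)}$.

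For this I would run a duality argument. Given $\psi\in L^2(B_R)$, let $\xi_\eps\in H^1(B_R)$ solve the adjoint transmission problem with source $\psi$ (governed by the same formally self-adjoint operator and the self-adjoint map $\Lambda$ on $\partial B_R$), so that $\norme{\xi_\eps}_{H^1(B_R)}\lesssim\norme{\psi}_{L^2(B_R)}$ uniformly in $\eps$ and $\omega$ by Proposition~\ref{prop:stabAbsorb_smallKR}. Testing the equations of $v_\eps$ and $\xi_\eps$ against one another and integrating by parts separately on $D$ and on $B_R\setminus\overline{D}$ (the interior fluxes cancel because $\xi_\eps$ solves a genuine transmission problem with continuous flux) reduces $\int_{B_R}v_\eps\overline\psi$ to boundary integrals on $\partial D$ built from the transmission data of $v_\eps$ in \eqref{eq:veps}: the trace jump $\eps u_{1,\eps}$ paired with a flux trace of $\xi_\eps$, and the flux jump $\eps\sum_i(\nabla\cdot(\sigma_i^\eps\partial_i u_0))^+\cdot\nu - k^2\eps(\beta^\eps u_0)^+\cdot\nu$ paired with the trace of $\xi_\eps$.

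The crux, and the step I expect to be the main obstacle, is that these boundary data are not pointwise small: since $\nabla\cdot\sigma_i = q_i$ (Definition~\ref{def:ExtendedCorrector}), one has $\eps\,\nabla\cdot(\sigma_i^\eps\partial_i u_0) = q_i(\frac{\cdot}{\eps})\partial_i u_0 + \eps\,\sigma_i^\eps\nabla\partial_i u_0$, whose first summand is $O(1)$, so smallness can only be extracted from the \emph{oscillation} of the correctors. The plan is to substitute for $\xi_\eps$ its own two-scale expansion $\xi_0 + \eps\xi_{1,\eps} + v_\eps^\ast$, estimating the remainder by the boundary-corrector bound applied to the dual; because $\psi\in L^2(B_R)$ only yields $\xi_0\in H^2$, hence $\xi_0\in W^{1+\alpha,p}(D)$ for suitable $\alpha\in(0,1]$ and $p>2$ by Sobolev embedding in dimension $d\le 3$, this is precisely where the less regular Proposition~\ref{prop:2scErrDecayBClessRegular} is needed. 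One then exploits the skew-symmetry of $\sigma$ to integrate the leading flux term tangentially along $\partial D$, trading the $O(1)$ factor $q_i(\frac{\cdot}{\eps})$ for $\eps$ times a derivative of $\sigma^\eps$ and thereby recovering an honest power of $\eps$, while the trace of the dual boundary layer $v_\eps^\ast$ supplies the compensation that keeps the remaining terms of order $\eps\,\mu_d(\frac{1}{\eps})$. Collecting the three contributions, taking the supremum over $\norme{\psi}_{L^2(B_R)}\le 1$, and absorbing the various corrector prefactors into a single random variable $\widehat{\chi_\eps}$ satisfying the stochastic integrability \eqref{eq:Exp_corrector_bounds} (by \cite[Lemma~A4]{armstrong2019quantitative}, exactly as in Proposition~\ref{prop:2scErrDecayBC}) yields the a.s. bound \eqref{eq:L2decayPS}; the estimate in expectation then follows from \eqref{eq:Exp_corrector_bounds}.
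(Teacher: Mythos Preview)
Your overall strategy matches the paper's: the same decomposition into $(u_\eps-u_0-\eps u_{1,\eps}-v_\eps)+\eps u_{1,\eps}+v_\eps$, the same direct treatment of the first two pieces, and a duality argument for $\norme{v_\eps}_{L^2(B_R)}$ that ultimately relies on homogenizing the dual solution and invoking the less regular estimate (Proposition~\ref{prop:2scErrDecayBClessRegular}) because the dual source is only in $L^2$.

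The execution of the duality step, however, differs. You propose to pair $v_\eps$ with the dual $\xi_\eps$ by integrating by parts on $D$ and $B_R\setminus\overline D$ separately, obtaining boundary integrals on $\partial D$, and then to substitute the two-scale expansion of $\xi_\eps$ into those boundary integrals, recovering the extra factor of $\eps$ via the skew-symmetry of $\sigma$ on $\partial D$. The paper instead first lifts $v_\eps$ to $\tilde v_\eps=v_\eps-\eps\eta_\eps u_{1,\eps}\in H^1(B_R)$ using the cutoff $\eta_\eps$ already introduced in the proof of Proposition~\ref{prop:2scErrDecay}; this converts the pairing $\int_{B_R}\tilde v_\eps\overline h$ into \emph{volume} integrals supported on the thin layer $\mathcal S_{\eta_\eps}$ of width $\sim\eps\mu_d(\frac1\eps)$. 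One $\eps^{1/2}\mu_d(\frac1\eps)^{1/2}$ then comes from the data estimates on $\mathcal S_{\eta_\eps}$ (these are exactly the bounds \eqref{eq:chi3}), and the second $\eps^{1/2}\mu_d(\frac1\eps)^{1/2}$ comes from $\norme{W_\eps}_{H^1(\mathcal S_{\eta_\eps})}$, obtained by homogenizing $W_\eps$ with Proposition~\ref{prop:2scErrDecaylessRegular} and then exploiting the thinness of $\mathcal S_{\eta_\eps}$ via Lemma~\ref{lem:lemma612}. The skew-symmetry of $\sigma$ is used, but earlier, inside the variational formulation \eqref{eq:FVtildeVeps}, not as a tangential integration by parts on $\partial D$.

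Your boundary-integral route is in principle workable, but it is more delicate: you would need trace estimates for $\phi^\eps,\sigma^\eps,\beta^\eps$ on $\partial D$ and a careful accounting of how the dual boundary layer $v_\eps^\ast$ interacts with the $O(1)$ flux data, whereas the paper's cutoff device bypasses all boundary traces of correctors and reduces the gain to a single clean mechanism (thinness of the layer).
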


\begin{rem}
A similar result has been shown in the periodic case in \cite{Cakoni2016OnTH}. Though, we cannot adapt the proof, since it uses the $L^\infty$-bound of the corrector, that does not hold in the stochastic setting. In \cite[Theorem~6.14]{armstrong2019quantitative}, the result is shown for Poisson equation in the Dirichlet case. Our proof is an adaptation of the latter result.
\end{rem}

\begin{proof}
To prove \eqref{eq:L2decayPS}, we use Proposition~\ref{prop:2scErrDecayBC} and the bounds on the correctors, which imply that a.s.
\begin{equation} \label{eq:normeErrL2}
\begin{split}
\norme{u_\eps - u_0}_{L^2(B_R)} & \lesssim \norme{u_\eps - u_0 - \eps u_{1,\eps} - v_\eps}_{L^2(B_R)} + \eps \norme{u_{1,\eps}}_{L^2(D)} + \norme{v_\eps}_{L^2(B_R)}  \\
& \lesssim \eps \mu_d(\frac{1}{\eps}) \chi_\eps \norme{u_0}_{W^{2,\infty}(D)} + \eps \norme{\phi^\eps}_{L^2(D)}\norme{u_0}_{W^{2,\infty}(D)} + \norme{v_\eps}_{L^2(B_R)} \\
& \lesssim \eps \mu_d(\frac{1}{\eps}) \chi_\eps \norme{u_0}_{W^{2,\infty}(D)} + \norme{v_\eps}_{L^2(B_R)}. \\
\end{split}
\end{equation}
It remains to estimate the $L^2$-norm of the boundary corrector, which we do by using a duality argument as in \cite{Cakoni2016OnTH}. \\
Let $h \in L^2(B_R)$. We wish to estimate
$$\left\vert\int_{B_R} v_\eps \overline{h} \right\vert.$$
To do so, we introduce the auxiliary function $W_\eps \in H^1(B_R)$ solution of
\begin{equation} \label{eq:Weps}
\left\{\begin{aligned} 
&-\nabla\cdot a_\eps^* \nabla W_\eps - k^2 n_\eps W_\eps = \overline{h} &&\textrm{in}\, B_R,\\
& \nabla W_\eps \cdot \nu = \Lambda(W_\eps) &&\textrm{on}\, \partial B_R. \\ 
\end{aligned}
\right.
\end{equation}
Here $a_\eps^*$ denote the transpose of $a_\eps$. \\
We write the variational formulation verified by $W_\eps$ in $H^1(B_R)$ and choose $\overline{\tilde{v}_\eps}$ as a test function. Recall that $\tilde{v}_\eps := v_\eps - \eps \eta_\eps u_{1,\eps}$ is the unique solution in $H^1(B_R)$ to \eqref{eq:tildeVeps}. We obtain
\begin{equation}
\begin{split}
\int_{B_R} \tilde{v}_\eps \overline{h} & = \int_{B_R} a_\eps^* \nabla W_\eps \cdot \nabla \tilde{v}_\eps - k^2 n_\eps W_\eps \tilde{v}_\eps - \left \langle \Lambda(\tilde{v}_\eps), \overline{W_\eps} \right\rangle_{H^{-\frac{1}{2}}(\partial B_R), H^{\frac{1}{2}}(\partial B_R)}. \\
\end{split}
\end{equation}
We choose in the variational formulation of $\tilde{v}_\eps$ \eqref{eq:FVtildeVeps}, $\overline{W_\eps}$ as a test function and subtract the two expressions to obtain
\begin{equation}
\begin{split}
\int_{B_R} \tilde{v}_\eps \overline{h} & = \int_D -\eps a_\eps \nabla (\eta_\eps u_{1, \eps}) \cdot \nabla W_\eps + \eps k^2 n_\eps (\eta_\eps u_{1, \eps}) W_\eps - \eps \left(\sum_{i=1}^d \nabla \cdot (\sigma_i^\eps \partial_i u_0 \eta_\eps) \right) \cdot \nabla W_\eps \\
& \hspace{2cm} - k^2 \eps \nabla \cdot (\beta^\eps u_0 \eta_\eps) W_\eps + k^2 \eps \beta^\eps u_0 \eta_\eps \cdot \nabla W_\eps.
\end{split}
\end{equation}
We use the estimate \eqref{eq:chi3} and get
\begin{equation} 
\begin{split} \label{eq:normeVeps}
\left \vert \int_{B_R} v_\eps \overline{h} \right \vert & \lesssim \eps\left \vert \int_{D} \eta_\eps u_{1,\eps} \overline{h} \right \vert + \eps^{\frac{1}{2}}\mu_d(\frac{1}{\eps})^{\frac{1}{2}}\widetilde{\chi_{\eps}^3} \norme{W_\eps}_{H^1(\mathcal{S}_\eps)} \norme{u_0}_{W^{2, \infty}(D)} \\ 
& \lesssim \eps^{\frac{3}{2}}\mu_d(\frac{1}{\eps})^{\frac{3}{2}}\widetilde{\chi_{\eps}^1}  \norme{h}_{L^2(D)} + \eps^{\frac{1}{2}}\mu_d(\frac{1}{\eps})^{\frac{1}{2}}\widetilde{\chi_{\eps}^3} \norme{W_\eps}_{H^1(\mathcal{S}_\eps)} \norme{u_0}_{W^{2, \infty}(D)}. \\
\end{split}
\end{equation}
It remains to show that 
$$\norme{W_\eps}_{H^1(\mathcal{S}_{\eta_\eps})} \lesssim \eps^{\frac{1}{2}} \mu_d(\frac{1}{\eps})^{\frac{1}{2}} \widetilde{\chi_{\eps}^4} \norme{h}_{L^2(B_R)}, $$  
for a random variable $\widetilde{\chi_{\eps}^4}$ satisfying the correct stochastic integrability. 
Following \cite{Cakoni2016OnTH}, we apply homogenization results to $W_\eps$ to obtain the desired estimate. \\
We thus introduce $W_0 \in H^2(B_R \setminus \overline{D}) \times H^2(D)$ solution of
\begin{equation} 
\left\{\begin{aligned} 
& -\Delta W_0 - k^2 n_0 W_0 = \overline{h} &&\textrm{in}\, B_R \setminus\overline{D},\\
&-\nabla\cdot a^{hom} \nabla W_0 - k^2 n^{hom} W_0 = \overline{h} &&\textrm{in}\, D,\\
& \nabla W_0^- \cdot \nu - a^{hom} \nabla W_0^+ \cdot \nu = 0 &&\textrm{on}\, \partial D, \\ 
& \nabla W_0 \cdot \nu  = \Lambda(W_0)  &&\textrm{on}\, \partial B_R. \\ 
 \end{aligned}
\right.
\end{equation}
The regularity of $W_0$ comes from Proposition~\ref{prop:regHk}. Moreover the following estimate holds
\begin{equation}
\norme{W_0}_{H^2(D)} \lesssim \norme{h}_{L^2(B_R)}.
\end{equation}
However, we have no guarantee that ${W_0}_{|_D}$ is in $W^{2, \infty}(D)$, since $h$ is only in $L^2(B_R)$. Therefore we cannot apply the result of Proposition~\ref{prop:2scErrDecay}. Note that even if we could, this would yield a control with the $W^{2, \infty}(D)$ norm of $W_0$, that we cannot directly link to $\norme{h}_{L^2(B_R)}$. Instead, Proposition~\ref{prop:2scErrDecaylessRegular} gives a $W^{\frac{3}{2}, 2 + s}(D)$-control, for $s > 0$, of the two-scale expansion error. \\
By the fractional Sobolev embedding (cf. \cite[Theorem~7.58]{adams2003sobolev}), there exists an exponent $s(d) > 0$ such that we have the embedding:
$$W^{2, 2}(D) \hookrightarrow W^{\frac{3}{2},2 + s}(D). $$
In particular, this yields
\begin{equation}
\norme{W_0}_{W^{\frac{3}{2}, 2 + s}(D)} \lesssim \norme{W_0}_{H^2(D)} \lesssim \norme{h}_{L^2(B_R)}.
\end{equation}
For $\alpha = \frac{1}{2}$, $p =  2+ s > 2$, Proposition~\ref{prop:2scErrDecaylessRegular} implies then
\begin{equation}
\begin{split}
\norme{W_\eps}_{H^1(\mathcal{S}_{\eta_\eps})} & \lesssim \norme{W_\eps - W_0 - \eps W_{1,\eps}}_{H^1(\mathcal{S}_{\eta_\eps})} + \norme{W_0 + \eps W_{1,\eps}}_{H^1(\mathcal{S}_{\eta_\eps})} \\
& \lesssim \eps^{\frac{1}{2}} \mu_d(\frac{1}{\eps})^{\frac{1}{2}} \widehat{\chi_{\eps, p}} \norme{W_0}_{W^{\frac{3}{2}, p}(D)} + \norme{W_0 + \eps W_{1,\eps}}_{H^1(\mathcal{S}_{\eta_\eps})}, \\ 
\end{split}
\end{equation}
where $$W_{1,\eps} := \sum_{i=1}^d \phi_i^\eps \partial_i \widehat{W_0}*\xi_\eps,$$
and $\widehat{\chi_{\eps, p}}$ verifies \eqref{eq:Exp_corrector_bounds}. The mollifier $\xi_\eps$ is defined by \eqref{eq:mollifier} and $\widehat{W_0}$ is the Sobolev extension in $H^1(\R^d)$ of ${W_0}_{|_D}$ (cf Lemma~\ref{lem:extensionThm}). \\
Using Lemma~\ref{lem:lemma612}, with 
$f = \nabla W_0$, $r = \eps \mu_d(\frac{1}{\eps})$, $p = 2+ s$, $\alpha = \frac{1}{2}$, $q = 2$, $\beta = \frac{1}{q} = \frac{1}{2}$, we obtain
\begin{equation}
\norme{\nabla W_0}_{L^2(\mathcal{S}_{\eta_\eps})} \lesssim \eps^{\frac{1}{2}} \mu_d(\frac{1}{\eps})^{\frac{1}{2}}\norme{W_0}_{W^{\frac{3}{2}, p}(D)}.
\end{equation} 
It also holds by the combination of \eqref{eq:tildeu12} and \eqref{eq:tildeu14} that
\begin{equation}
\begin{split}
\eps \norme{\nabla W_{1,\eps}}_{L^2(\mathcal{S}_{\eta_\eps})} & \lesssim \eps^{\frac{1}{2}}\mu_d(\frac{1}{\eps})^{\frac{1}{2}} \widehat{\chi_{\eps, p}} \norme{W_0}_{W^{\frac{3}{2}, p}(D)}.
\end{split}
\end{equation} 
with 
$$\widehat{\chi_{\eps, p}} := \widetilde{\chi_{\eps, p}^2} + \eps^{\frac{1}{2} - \frac{1}{p}} \mu_d(\frac{1}{\eps})^{1-\frac{1}{p}} \widetilde{\chi_{\eps, p}^1},$$ 
and $\widetilde{\chi_{\eps, p}^1}$, $\widetilde{\chi_{\eps, p}^2}$ are defined in \eqref{eq:ChiEpsP} and satisfy \eqref{eq:Exp_corrector_bounds}. With the same arguments, similar estimates can be derived for $\norme{W_0}_{L^2(\mathcal{S}_{\eta_\eps})}$ and $\eps\norme{W_{1,\eps}}_{L^2(\mathcal{S}_{\eta_\eps})}$. This yields that
\begin{equation} \label{eq:WepsH1}
\begin{split}
\norme{W_\eps}_{H^1(\mathcal{S}_{\eta_\eps})} & \lesssim \eps^{\frac{1}{2}}\mu_d(\frac{1}{\eps})^{\frac{1}{2}} \widetilde{\chi_{\eps, p}^4} \norme{h}_{L^2(D)}, \\
\end{split}
\end{equation}
where $\widetilde{\chi_{\eps, p}^4} := 1 + \widehat{\chi_{\eps,p}} + \widetilde{\chi_{\eps,p}^1}$ satisfies \eqref{eq:Exp_corrector_bounds}. \\
We combine \eqref{eq:normeVeps} and \eqref{eq:WepsH1} to obtain
\begin{equation}
\begin{split}
\norme{v_\eps}_{L^2(B_R)} & \lesssim \eps \mu_d(\frac{1}{\eps})\left(\eps^{\frac{1}{2}}\mu_d(\frac{1}{\eps})^{\frac{1}{2}} \widetilde{\chi_{\eps}^1} + \widetilde{\chi_{\eps}^3}\widetilde{\chi_{\eps, p}^4} \right) \norme{u_0}_{W^{2, \infty}(D)}.
\end{split}
\end{equation} 
Therefore, by \eqref{eq:normeErrL2} we get 
\begin{equation}
\norme{u_\eps - u_0}_{L^2(B_R)} \lesssim \eps \mu_d(\frac{1}{\eps})\widehat{\chi_\eps} \norme{u_0}_{W^{2, \infty}(D)},
\end{equation}
where $\widehat{\chi_\eps}$ defined by
$$\widehat{\chi_\eps} := \chi_\eps + \eps^{\frac{1}{2}}\mu_d(\frac{1}{\eps})^{\frac{1}{2}} \widetilde{\chi_{\eps}^1} + \widetilde{\chi_{\eps}^3}\widetilde{\chi_{\eps, 2 + \delta}^4},$$ 
satisfies the desired stochastic integrability thanks to the following version of H\"older's inequality \cite{duerinckx2020robustness}.
\begin{lem}[H\"older's inequality] \label{lem:holder}
For all random variables $Y_1, Y_2$, given $\kappa_1, \kappa_2 > 0$,
\begin{equation} \label{eq:holder}
\begin{split}
& \textrm{if } \E[\exp(Y_1^{\kappa_1})] \leq 2 \textrm{ and } \E[\exp(Y_2^{\kappa_2})] \leq 2, \\
& \hspace{1cm} \textrm{then there exists }C > 0, \textrm{ such that }
\E\biggl[\exp\left(\frac{1}{C} (Y_1 Y_2)^{\frac{\kappa_1 \kappa_2}{\kappa_1 + \kappa_2}}\right)\biggr] < \infty.
\end{split}
\end{equation}
\end{lem}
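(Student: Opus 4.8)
The plan is to reduce the statement to a pointwise Young inequality followed by a single application of the Cauchy--Schwarz inequality, so that everything collapses onto the two hypotheses. Throughout I would treat $Y_1, Y_2$ as nonnegative, which is automatic in every application in this paper (there the $Y_i$ are norms of correctors such as $\chi_\eps$, $r_*$ and the $\widetilde{\chi_\eps^j}$); if needed one replaces $Y_i$ by $|Y_i|$, which only enlarges the fractional powers $Y_i^{\kappa_i}$ and hence the stated exponential moments, so the reduction is harmless.

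First I would introduce the target exponent $\kappa := \frac{\kappa_1 \kappa_2}{\kappa_1 + \kappa_2}$, characterized by $\frac{1}{\kappa} = \frac{1}{\kappa_1} + \frac{1}{\kappa_2}$, and set $p := \kappa_1/\kappa$ and $p' := \kappa_2/\kappa$. A direct check gives $\frac{1}{p} + \frac{1}{p'} = \kappa\left(\frac{1}{\kappa_1} + \frac{1}{\kappa_2}\right) = 1$, so $p$ and $p'$ are Hölder conjugates. Applying Young's inequality $ab \leq \frac{a^p}{p} + \frac{b^{p'}}{p'}$ with $a = Y_1^\kappa$ and $b = Y_2^\kappa$, and using $\kappa p = \kappa_1$, $\kappa p' = \kappa_2$ together with $\frac{1}{p}, \frac{1}{p'} \leq 1$, yields the pointwise bound
\begin{equation*}
(Y_1 Y_2)^\kappa = (Y_1^\kappa)(Y_2^\kappa) \leq \frac{1}{p} Y_1^{\kappa p} + \frac{1}{p'} Y_2^{\kappa p'} = \frac{1}{p} Y_1^{\kappa_1} + \frac{1}{p'} Y_2^{\kappa_2} \leq Y_1^{\kappa_1} + Y_2^{\kappa_2}.
\end{equation*}

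Next I would exponentiate and use the multiplicativity of $\exp$ to get, for any $C > 0$,
\begin{equation*}
\exp\left(\frac{1}{C}(Y_1 Y_2)^\kappa\right) \leq \exp\left(\frac{1}{C}Y_1^{\kappa_1}\right)\exp\left(\frac{1}{C}Y_2^{\kappa_2}\right),
\end{equation*}
and then take expectations and apply Cauchy--Schwarz:
\begin{equation*}
\E\left[\exp\left(\frac{1}{C}(Y_1 Y_2)^\kappa\right)\right] \leq \E\left[\exp\left(\frac{2}{C}Y_1^{\kappa_1}\right)\right]^{\frac{1}{2}} \E\left[\exp\left(\frac{2}{C}Y_2^{\kappa_2}\right)\right]^{\frac{1}{2}}.
\end{equation*}
Finally I would fix $C = 2$, so that $\frac{2}{C} = 1$; since $Y_1^{\kappa_1}, Y_2^{\kappa_2} \geq 0$, the two hypotheses $\E[\exp(Y_i^{\kappa_i})] \leq 2$ apply verbatim and bound the right-hand side by $\sqrt{2}\cdot\sqrt{2} = 2 < \infty$, which is exactly the assertion.

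There is no serious obstacle here: the whole content is the bookkeeping of exponents that makes Young's inequality reproduce precisely the two hypothesized integrands, together with the trivial choice of constant. The only point requiring a word of care is the well-definedness of the fractional powers, i.e. the reduction to nonnegative $Y_1, Y_2$ noted above; this is why the lemma is stated and used for the positive random variables appearing in the definition of $\widehat{\chi_\eps}$, allowing stochastic integrability of type \eqref{eq:Exp_corrector_bounds} to be transferred through products such as $\widetilde{\chi_{\eps}^3}\widetilde{\chi_{\eps, 2 + \delta}^4}$, albeit with a smaller exponent $\kappa$.
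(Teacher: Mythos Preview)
Your proof is correct. The paper does not actually prove this lemma: it is stated inside the proof of Proposition~\ref{prop:2scErrDecayL2} and attributed to \cite{duerinckx2020robustness} without further argument, so there is no in-paper proof to compare against; your Young-then-Cauchy--Schwarz argument with the choice $C=2$ is exactly the standard short justification one would expect for this kind of statement.
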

\end{proof}

\section{Asymptotic expansion of the scattered field} \label{section:IRfield}

\subsection{Main result}
The convergence estimates that we established in the previous section provide an asymptotic expansion of the field at order $\eps$. Outside $D$, $u_\eps$ is approximated at first-order by $u_0$ according to Proposition~\ref{prop:2scErrDecay}. Physically, $u_0$ corresponds to the wave that interacts with the effective medium of parameters $a^{hom}$ and $n^{hom}$. It depends on the distribution of the scatterers as $a^{hom}$ does but it is deterministic and thus is not characteristic of one realization in a given medium. In the context of ultrasounds the measurements are usually done using the same sensor array that transmits the plane wave excitation (ultrasonic transducers can be used as transmitters and as receivers). So $u_0$ contains only the contribution from the boundary $\partial D$ while we would like to characterize the speckle field generated by the small heterogeneities. 
We are then interested in this section in obtaining the next order term in the expansion of the field outside of $D$. \\
We introduce $G_0$ the Green function associated to the homogenized problem \eqref{eq:u0BR} \textit{i.e.} $G_0$ verifies in $\mathcal{D}'(B_R)$ for all $y \in B_R$,
\begin{equation}\label{eq:G0}
\left\{\begin{aligned}
&- \Delta G_0(\cdot, y) - k^2 n_0 G_0(\cdot, y) = \delta(\cdot - y) && \textrm{in}\, B_R \setminus \overline{D}, \\
& -\nabla\cdot\left(a^{hom} \nabla G_0(\cdot, y) \right) - k^2 n^{hom} G_0(\cdot, y) = \delta(\cdot - y) && \textrm{in}\, D,\\
&G_0(\cdot, y)^- = G_0(\cdot, y)^+ && \textrm{on}\, \partial D,\\
& \nabla G_0(\cdot, y)^- \cdot \nu - a^{hom} \nabla G_0(\cdot, y)^+ \cdot \nu = 0  && \textrm{on}\, \partial D,\\
& \nabla G_0 \cdot \nu = \Lambda(G_0) &&  \textrm{on}\, \partial B_R.\\
\end{aligned} \right.
\end{equation}
For all $\alpha > 0$, we define $D^\alpha := \{x \in B_R \, \vert \, dist(x, D) < \alpha \}$. For $z \in B_R \setminus \overline{D^\alpha}$, $u_\eps$  verifies the following Lippman-Schwinger equation
\begin{equation}\label{eq:LSEQ}
\begin{split}
u_\eps(z) = u_0(z) + \int_D & (a^{hom} - a_\eps(x)) \nabla u_\eps(x) \cdot \nabla G_0(x,z) \mathrm{d}x \\
& - k^2 \int_D (n^{hom} - n_\eps(x)) u_\eps(x) G_0(x,z) \mathrm{d}x.
\end{split}
\end{equation}
We make use of the asymptotic expansion of $u_\eps$ in $H^1(D)$ and obtain for all ${z \in B_R \setminus \overline{D^\alpha}}$
\begin{equation} \label{eq:asympExp}
\begin{split} 
u_\eps(z) = u_0(z) + \int_D & \sum_{i=1}^d \biggl(a^{hom} - a_\eps(x) (e_i + \eps \nabla \phi_i^\eps(x) \biggr)\partial_i u_0(x) \cdot \nabla G_0(x,z) \mathrm{d}x \\
& - k^2 \int_D (n^{hom} - n_\eps(x)) u_0(x) G_0(x,z) \mathrm{d}x + R^\eps(z),
\end{split}
\end{equation}
where 
\begin{equation}
\begin{split}
R^\eps(z) := \int_D & (a^{hom} - a_\eps(x) \left(\nabla u_\eps(x) - \sum_{i=1}^d (e_i + \eps \nabla \phi_i^\eps(x))\partial_i u_0(x) \right) \cdot \nabla G_0(x,z) \mathrm{d}x \\
& - k^2 \int_D (n^{hom} - n_\eps(x)) (u_\eps(x)- u_0(x)) G_0(x,z) \mathrm{d}x.
\end{split} 
\end{equation}
Using the strong convergence estimates established in Proposition~\ref{prop:2scErrDecay} and Proposition~\ref{prop:2scErrDecayL2} leads to 
$$\E[\norme{R^\eps(z)}_{L^2(B_R \setminus \overline{D^\alpha})}] \lesssim \eps^{\frac{1}{2}}\mu_d(\frac{1}{\eps})^{\frac{1}{2}}, $$
which is not sufficient since $u_\eps - u_0$ is of order $\eps$ in $L^2(B_R \setminus \overline{D})$. We thus need to estimate more sharply the weak convergence of the two quantities $(a^{hom} - a_\eps(x)) \left(\nabla u_\eps - \sum_{i=1}^d (e_i +\eps \nabla \phi_i^\eps)\partial_i u_0 \right)$ and $(n^{hom} - n_\eps) (u_\eps - u_0)$. \\
In \cite{duerinckx2020structure} and \cite{duerinckx2020robustness}, the authors study the fluctuations of $\nabla u_\eps$ in the context of the Poisson equation in $\R^d$. They prove that the fluctuations of $\nabla u_\eps$ and $a_\eps \nabla u_\eps$ can be recovered from the fluctuations of the commutator $\Xi \in L^2_{loc}(\R^d)^d$ defined by:
\begin{equation} \label{eq:Xi}
\forall i \in \intInter{1,d}, \, \Xi_i := (a - a^{hom})(e_i + \nabla \phi_i) .
\end{equation}
This leads to estimate
\begin{equation}
\begin{split}
\mathcal{R}^\eps := \eps^{-\frac{d}{2}}\int_{\R^d} (a_\eps - a^{hom}) \nabla u_\eps \cdot g - \sum_{i=1}^d \Xi_i(\frac{\cdot}{\eps}) \partial_i u_0 \cdot g
\end{split}
\end{equation}
for all $g \in \mathcal{C}^\infty_c(\R^d)^d$. They show that, for all $g \in \mathcal{C}^\infty_c(\R^d)^d$
\begin{equation} \label{eq:decayMitia}
\begin{split}
\Var[\mathcal{R}^\eps]^{\frac{1}{2}} 
% & = \E \left[ \biggl|\eps^{-\frac{d}{2}}\int_{\R^d} (a_\eps - a^{hom}) \nabla u_\eps \cdot g - \E[(a_\eps - a^{hom}) \nabla u_\eps ] \cdot g - \sum_{i=1}^d \Xi_i(\frac{\cdot}{\eps}) \partial_i u_0 \cdot g\biggr|^2\right]^{\frac{1}{2}} \\
& \lesssim \eps \mu_d(\frac{1}{\eps}).
\end{split}
\end{equation}
We extend this result to our situation where the Poisson equation is replaced by the Helmholtz equation leading to a second term in $R^\eps$ and where we have to take into account the boundary of $D$ as the support of $G_0$ is not compactly supported in $D$. We deal with this last point in a similar manner as in Section~\ref{section:2scErr} by introducing the appropriate boundary layer. However the rate of convergence is now $1/2$ order smaller. \\
Our main result is stated in the following theorem.
\begin{thm}[Pointwise convergence of $R^\eps$] \label{thm:Rdecay}
Let $u_\eps \in H^1(B_R)$ be the a.s. solution of \eqref{eq:mainBR}, ${u_0 \in H^1(B_R)}$ such that ${{u_0}_{|_D} \in W^{2, \infty}(D)}$ be the solution of \eqref{eq:u0BR} and for $y\in B_R \setminus \overline{D^\alpha}$, let ${G_0(\cdot, y) \in H^1(B_R \setminus \{y\})}$ such that  ${G_0(\cdot, y)_{_D} \in W^{2, \infty}(D)}$ be the solution of \eqref{eq:G0}. Define $\mathcal{U}_1 \in H^1(B_R \setminus \{y\})$ as:
\begin{equation}
\begin{split}
& \mathcal{U}_1 := \E[u_\eps - u_0]  + \sum_{i=1}^d \int_{D} (a^{hom} - a_\eps(x)) (e_i + \eps \nabla \phi_i^\eps(x)) \partial_i u_0(x) \cdot \nabla G_0(x,\cdot)\mathrm{d}x \\ 
& \hspace{3cm} - k^2 \int_D (n^{hom} - n_\eps(x)) u_0(x) G_0(x, \cdot)\mathrm{d}x.
\end{split}
\end{equation} 
Then
\begin{equation} \label{eq:L2decayE}
\begin{split} 
& \E\left[ \left\vert u_\eps(y) - u_0(y) - \mathcal{U}_1 (y) \right\vert^2 \right]^{\frac{1}{2}}  \\ 
& \hspace{3cm}\lesssim \eps^{\frac{d+1}{2}} \mu_d(\frac{1}{\eps})^{\frac{1}{2}} \norme{u_0}_{W^{2, \infty}(D)} \norme{G_0(\cdot, y)}_{W^{2, \infty}(D)},
\end{split}
\end{equation}
and if we further assume that $x \mapsto G(x, y)$ is in $W^{3, \infty}(D)$ for $y \in B_R \setminus \overline{D^\alpha}$, then
\begin{equation} \label{eq:H1decayE}
\begin{split} 
& \E\left[ \left\vert \nabla u_\eps(y) - \nabla u_0(y) - \nabla \mathcal{U}_1 (y) \right\vert^2 \right]^{\frac{1}{2}}  \\ 
& \hspace{3cm}\lesssim \eps^{\frac{d+1}{2}} \mu_d(\frac{1}{\eps})^{\frac{1}{2}} \norme{u_0}_{W^{2, \infty}(D)} \norme{G_0(\cdot, y)}_{W^{3, \infty}(D)}.
\end{split}
\end{equation}
\end{thm}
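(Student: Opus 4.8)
The plan is to recognise the left-hand side of \eqref{eq:L2decayE} as a \emph{centred} random variable and to control its variance through the multiscale inequality of Hypothesis~\ref{hyp:mix_hyp}, adapting the commutator-fluctuation estimate \eqref{eq:decayMitia} to the present transmission setting. First I would insert the two-scale expansion of $\nabla u_\eps$ into the Lippman--Schwinger identity \eqref{eq:LSEQ} to produce \eqref{eq:asympExp}, and rewrite its first-order term with the rescaled commutator $\Xi_i(\frac{\cdot}{\eps}) = (a_\eps - a^{hom})(e_i + \eps\nabla\phi_i^\eps)$ of \eqref{eq:Xi}. Since $\E[\Xi_i] = 0$ by the definition \eqref{eq:homcoeff} of $a^{hom}$ together with $\E[\nabla\phi_i]=0$, and $\E[n_\eps - n^{hom}]=0$ because $n^{hom}=\E[n]$, the (deterministic $G_0$, $u_0$ allow pulling $\E$ inside) expectation of the whole first-order term vanishes, so that
\begin{equation*}
u_\eps(y) - u_0(y) - \mathcal{U}_1(y) = R^\eps(y) - \E[R^\eps(y)].
\end{equation*}
It therefore suffices to bound $\Var[R^\eps(y)]^{\frac{1}{2}}$. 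I would split $R^\eps = R^\eps_a + R^\eps_n$, with
\begin{equation*}
R^\eps_a(y) = -\int_D\Big[(a_\eps - a^{hom})\nabla u_\eps - \sum_{i=1}^d\Xi_i(\tfrac{\cdot}{\eps})\partial_i u_0\Big]\cdot\nabla G_0(\cdot,y),
\end{equation*}
which is exactly the commutator defect appearing in \eqref{eq:decayMitia} tested against $g = \nabla G_0(\cdot,y)$, and $R^\eps_n(y) = k^2\int_D(n_\eps - n^{hom})(u_\eps - u_0)\,G_0(\cdot,y)$ the extra term produced by the zeroth-order coefficient of the Helmholtz operator.

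The core step is to apply \eqref{eq:decorr} with $F(S) = R^\eps(y)$, reducing everything to estimating $\partial^{osc}_{S,B_\ell(x)}R^\eps(y)$, the response of $R^\eps(y)$ to resampling $S$ inside $B_\ell(x)$ (hence $a_\eps,n_\eps$ in the physical ball $B_{\eps\ell}(\eps x)$). This response splits into a \emph{local} contribution, where the integrands $(a_\eps - a^{hom})\nabla u_\eps$, $\Xi_i(\frac{\cdot}{\eps})$ and $(n_\eps - n^{hom})(u_\eps - u_0)$ change directly on the perturbation region, and a \emph{nonlocal} contribution, where $u_\eps$, $\nabla u_\eps$ and the correctors $\phi_i$ change through their defining equations. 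For the former I would use the pointwise corrector bounds \eqref{eq:corrector_bounds} and the mean-value property \eqref{eq:grad_corr}; for the latter I would represent the perturbations of $u_\eps$ and of $\phi_i$ through the Green function $G_\eps$ of \eqref{eq:mainBR} and the corrector Green function and exploit their decay, granted by the coercivity behind Proposition~\ref{prop:stabAbsorb_smallKR} and large-scale regularity. Throughout, $\nabla G_0(\cdot,y)$ and $G_0(\cdot,y)$, which lie in $W^{1,\infty}(D)$ by the hypothesis ${G_0(\cdot,y)}_{|_D}\in W^{2,\infty}(D)$, supply the deterministic weight that makes the $x$-integral converge, while the exponential weight $\pi$ handles the $\ell$-integral; the stochastic integrability of the random prefactors is then secured by \eqref{eq:Exp_corrector_bounds} and the Hölder inequality of Lemma~\ref{lem:holder}, exactly as in Section~\ref{section:2scErr}.

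The genuinely new difficulty, and the main obstacle, is that $g = \nabla G_0(\cdot,y)$ is \emph{not} compactly supported inside $D$, so \eqref{eq:decayMitia} cannot be invoked verbatim: the mismatch at $\partial D$, combined with the breakdown of the two-scale expansion of $u_\eps$ in the $O(\eps)$ boundary layer, generates a surface contribution. I would absorb it by introducing a boundary corrector analogous to $v_\eps$ of Definition~\ref{def:BC}, tailored to the boundary data carried by $\sigma^\eps$, $\beta^\eps$ and $\nabla G_0$ on $\partial D$, as in Section~\ref{section:2scErr}. The relevant boundary layer, of width $\sim\eps\mu_d(\frac{1}{\eps})$, contributes a variance of order $\eps^{d+1}\mu_d(\frac{1}{\eps})$, which dominates the interior contribution $\eps^{d+2}\mu_d(\frac{1}{\eps})^2$ inherited from \eqref{eq:decayMitia}; this is precisely the announced loss of half an order in $\eps$.

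Collecting the local, nonlocal and boundary contributions and taking square roots then yields \eqref{eq:L2decayE}. Finally, \eqref{eq:H1decayE} would follow by differentiating the representation \eqref{eq:LSEQ} in the exterior variable $y$, which moves one derivative onto $G_0$ so that the test functions carry one additional $x$-derivative; the same sensitivity analysis applies unchanged, at the cost of requiring ${G_0(\cdot,y)}_{|_D}\in W^{3,\infty}(D)$.
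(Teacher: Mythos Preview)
Your high-level architecture is exactly the paper's: reduce to $\Var[R^\eps(y)]$ via the identity $u_\eps-u_0-\mathcal{U}_1=R^\eps-\E[R^\eps]$, then control the variance through the multiscale functional inequality~\eqref{eq:decorr}, with the boundary layer responsible for the $\eps^{(d+1)/2}$ rather than $\eps^{(d+2)/2}$ rate, and \eqref{eq:H1decayE} obtained by moving the $y$-derivative onto $G_0$. The differences lie in how the oscillation is actually computed.

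Where you propose a local/nonlocal split and Green-function representations for $\delta u_\eps$, $\delta\phi_i$, the paper instead derives an explicit \emph{algebraic} representation formula for $\delta\mathcal{P}$ (Lemma~\ref{lem:repreformula}). The key manipulation is to rewrite $(a^{hom}-a_\eps)e_j = \eps a_\eps\nabla\phi_j^{\eps,*}-\eps\nabla\cdot\sigma_j^{\eps,*}$ using the \emph{adjoint} correctors, integrate by parts, and absorb the resulting boundary and dual terms into two auxiliary problems: a Helmholtz-type adjoint $r_j$ in $B_R$ (eq.~\eqref{eq:rEps}) and a Poisson problem $R_{ij}$ in $\dot H(\R^d)$ (eq.~\eqref{eq:rij}). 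This decomposes $\delta\mathcal{P}$ into four pieces $I$--$IV$, each estimated separately; the boundary corrector you anticipate is realised concretely as $r_j$ (carrying the $\sigma^{\eps,*}$, $\beta^\eps$ boundary data) and enjoys the same $\eps^{1/2}\mu_d(1/\eps)^{1/2}$ bound as $v_\eps$.

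One point to flag: your reference to ``large-scale regularity'' for the nonlocal Helmholtz contribution is where the paper explicitly deviates from \cite{duerinckx2020robustness}. Large-scale weighted Calder\'on--Zygmund is available for $-\nabla\cdot a\nabla$ but not (directly) for the Helmholtz operator; the paper sidesteps this by exploiting boundedness of $D$ through the elementary Lemma~\ref{lem:CSestimates}, which replaces the maximal-function/Calder\'on--Zygmund machinery for the Helmholtz terms ($I$, $II$, $IV$). The genuine Calder\'on--Zygmund estimate \eqref{eq:CZ-mitia} is still invoked, but only for term $III$, which involves $R_{ij}$ and is a pure divergence-form problem on $\R^d$. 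Your Green-function sensitivity route could in principle be made to work, but you would need to isolate a similar replacement for the Helmholtz part.
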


\begin{rem}
Note that for $y \in B_R \setminus \overline{D^\alpha}$, ${G_0}(\cdot ,y)_{|_D}$ belongs to $W^{2, \infty}(D)$ in view of Proposition~\ref{prop:regHk}. The regularity $G_0(\cdot, y)_{|_D} \in W^{3, \infty}(D)$ can be obtained by assuming that the boundary of $D$ is $\mathcal{C}^5$ by the Sobolev embeddings \cite[Corollary~9.15]{brezis2011functional}.
\end{rem}

\begin{cor}[$L^2$- and $H^1$convergence of $R^\eps$] \label{cor:R1decay}
For all $y \in B_R \setminus \overline{D^\alpha}$,
\begin{equation} 
\begin{split} 
& \E\left[ \norme{u_\eps - u_0 - \mathcal{U}_1}^2_{L^2(B_R \setminus \overline{D^\alpha})} \right]^{\frac{1}{2}}  \\ 
& \hspace{3cm}\lesssim_{\alpha} \eps^{\frac{d+1}{2}} \mu_d(\frac{1}{\eps})^{\frac{1}{2}} \norme{u_0}_{W^{2, \infty}(D)} \left( \int_{B_R \setminus \overline{D^\alpha}} \norme{G_0(\cdot, y)}_{W^{2, \infty}(D)}^2 \mathrm{d}y \right)^{\frac{1}{2}},
\end{split}
\end{equation}
and if we further assume that $G_0(\cdot,y)_{|_D}$ is in $W^{3, \infty}(D)$ for $y \in B_R \setminus \overline{D^\alpha}$, then
\begin{equation}
\begin{split} 
& \E\left[ \norme{u_\eps - u_0 - \mathcal{U}_1}^2_{H^1(B_R \setminus \overline{D^\alpha})} \right]^{\frac{1}{2}}  \\ 
& \hspace{3cm}\lesssim_{\alpha} \eps^{\frac{d+1}{2}} \mu_d(\frac{1}{\eps})^{\frac{1}{2}} \norme{u_0}_{W^{2, \infty}(D)} \left(\int_{B_R \setminus \overline{D^\alpha}} \norme{G_0(\cdot, y)}_{W^{3, \infty}(D)}^2 \mathrm{d}y \right)^{\frac{1}{2}}.
\end{split}
\end{equation}
\end{cor}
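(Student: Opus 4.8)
The plan is to obtain the corollary as a direct consequence of the pointwise estimates \eqref{eq:L2decayE} and \eqref{eq:H1decayE} of Theorem~\ref{thm:Rdecay}, by integrating them over the observation point $y \in B_R \setminus \overline{D^\alpha}$ and exchanging the spatial integral with the expectation. No new analytic input beyond the theorem is required; the entire content already sits in the pointwise bounds.

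For the $L^2$ estimate, I would start from the identity
\begin{equation*}
\norme{u_\eps - u_0 - \mathcal{U}_1}^2_{L^2(B_R \setminus \overline{D^\alpha})} = \int_{B_R \setminus \overline{D^\alpha}} \left\vert u_\eps(y) - u_0(y) - \mathcal{U}_1(y) \right\vert^2 \mathrm{d}y,
\end{equation*}
and, since the integrand is nonnegative and $(y,\omega) \mapsto \vert u_\eps(y) - u_0(y) - \mathcal{U}_1(y)\vert^2$ is jointly measurable, invoke Tonelli's theorem to write
\begin{equation*}
\E\left[\norme{u_\eps - u_0 - \mathcal{U}_1}^2_{L^2(B_R \setminus \overline{D^\alpha})}\right] = \int_{B_R \setminus \overline{D^\alpha}} \E\left[\left\vert u_\eps(y) - u_0(y) - \mathcal{U}_1(y) \right\vert^2\right] \mathrm{d}y.
\end{equation*}
Inserting the square of the pointwise bound \eqref{eq:L2decayE} and pulling out the factor $\eps^{d+1} \mu_d(\frac{1}{\eps}) \norme{u_0}^2_{W^{2, \infty}(D)}$, which does not depend on $y$, yields
\begin{equation*}
\E\left[\norme{u_\eps - u_0 - \mathcal{U}_1}^2_{L^2(B_R \setminus \overline{D^\alpha})}\right] \lesssim \eps^{d+1} \mu_d(\frac{1}{\eps}) \norme{u_0}^2_{W^{2, \infty}(D)} \int_{B_R \setminus \overline{D^\alpha}} \norme{G_0(\cdot, y)}^2_{W^{2, \infty}(D)} \mathrm{d}y.
\end{equation*}
Taking square roots gives the first inequality of the corollary.

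For the $H^1$ estimate, I would decompose $\norme{\cdot}^2_{H^1(B_R \setminus \overline{D^\alpha})} = \norme{\cdot}^2_{L^2(B_R \setminus \overline{D^\alpha})} + \norme{\nabla \cdot}^2_{L^2(B_R \setminus \overline{D^\alpha})}$ and treat the gradient term identically, this time feeding in \eqref{eq:H1decayE} instead of \eqref{eq:L2decayE}. Since $\norme{G_0(\cdot, y)}_{W^{2, \infty}(D)} \leq \norme{G_0(\cdot, y)}_{W^{3, \infty}(D)}$, the $L^2$ contribution is controlled by the same $W^{3, \infty}$-weighted right-hand side, so summing the two pieces and taking square roots produces the stated $H^1$ bound.

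The only points requiring care are technical rather than substantial, and this is where I would focus. First, Tonelli's theorem presupposes the joint measurability of $(y, \omega) \mapsto u_\eps(y, \omega)$ and of $\mathcal{U}_1$; the former follows from the a.s. membership $u_\eps \in H^1(B_R)$ together with measurable dependence on $\omega$, and the latter from the explicit integral representation of $\mathcal{U}_1$. Second, the right-hand side is finite precisely because $y$ is constrained to $B_R \setminus \overline{D^\alpha}$: the singularity of $G_0(\cdot, y)$ sits at $x = y$, at distance at least $\alpha$ from $D$, so that $x \mapsto G_0(x, y)$ is smooth on $D$ with $W^{2, \infty}(D)$ (respectively $W^{3, \infty}(D)$) norm bounded uniformly once $y$ stays away from $D$ — this uniformity is the origin of the $\alpha$-dependence of the implicit constant recorded in $\lesssim_\alpha$. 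Since the Green-function norms are retained on the right-hand side rather than estimated, no further work is needed.
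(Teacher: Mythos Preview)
Your proposal is correct and matches the paper's own reasoning: the paper explicitly remarks that the pointwise variance bound \eqref{eq:mainBRCommutator} (equivalently \eqref{eq:L2decayE}, \eqref{eq:H1decayE}) ``will yield the desired result by integrating over $y$'', which is precisely the Tonelli-plus-square-root argument you wrote out. The additional care you take about measurability and about the $\alpha$-dependence through the uniform smoothness of $G_0(\cdot,y)$ on $D$ for $y \in B_R \setminus \overline{D^\alpha}$ is appropriate and consistent with the paper's setup.
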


Moreover we denote by $a^*$ the transpose of $a$ and $\phi^*$, $\sigma^*$ the adjoint correctors that solves respectively \eqref{eq:phi} and \eqref{eq:extended_corrector} with $a^*$ instead of $a$. Finally, we write $\phi^{\eps,*} := \phi^*(\frac{\cdot}{\eps})$ and $\sigma^{\eps,*} := \sigma^*(\frac{\cdot}{\eps})$. \\
Note that, from \eqref{eq:asympExp}, for all $y \in B_R \setminus \overline{D^\alpha}$,
\begin{equation} 
\begin{split} 
u_\eps(y) - u_0(y) - \mathcal{U}_1(y) = \\ &\hspace{-1.4cm} \int_{D} (a^{hom} - a_\eps(x)) (\nabla u_\eps(x) - \sum_{i=1}^d (e_i + \eps \nabla \phi_i^\eps(x) \partial_i u_0(x)) \cdot \nabla G_0(x,y) \mathrm{d}x \\
&\hspace{-0.3cm} - k^2 \int_D (n^{hom} - n_\eps(x)) (u_\eps(x) - u_0(x)) G_0(x, y)\mathrm{d}x - \E[u_\eps - u_0] \\
& = R^\eps (y) - \E[R^\eps (y)].
\end{split}
\end{equation}
We follow the strategy of \cite{duerinckx2020robustness}, to show that
\begin{equation} \label{eq:mainBRCommutator}
\begin{split} 
\Var\biggl[R_\eps(y)\biggr] & \lesssim \eps^{d + 1} \mu_d(\frac{1}{\eps}) \norme{u_0}_{W^{2, \infty}(D)}^2 \norme{G_0(\cdot, y)}_{W^{2, \infty}(D)}^2,
\end{split}
\end{equation}
which will yield the desired result by integrating over $y$. \\
In \cite{duerinckx2020robustness}, three main tools are used to show \eqref{eq:decayMitia}:
\begin{itemize}
\item the \textit{multiscale functional inequality} Hypothesis~\ref{hyp:mix_hyp} that also holds here.
\item the bounds on the corrector (Proposition~\ref{prop:corrbounds}) and the convergence of the two-scale expansion (without the boundary corrector) that we showed in Proposition~\ref{prop:2scErrDecay}
\item the large-scale (weighted) Calder\'on-Zygmund estimates stated in  \cite{gloria2014regularity}. 
\end{itemize}
In our configuration, we can use the two first tools. However the large-scale Calder\'on-Zygmund estimates were developed for the Poisson equation, not for the Helmholtz equation. Instead of deriving similar estimates for Helmholtz equation, we take advantage of the boundness of the our domain $D$ to establish the following Lemma~\ref{lem:CSestimates}.

\begin{lem} \label{lem:CSestimates}
\begin{enumerate}[label=(\alph*)]
\item There exists a constant $C$ depending only on $d$ such that, for any $U \in L^1(D)$ and $t>0$, 
\begin{equation} \label{eq:CZ-1}
\int_{\R^d} \left(\int_{B_t(x)\cap D} |U|\right) \mathrm{d}x \leq C t^d \int_{D} |U|.
\end{equation}

\item For $T > 0$, let $\rho_T(x)$ be the radial weight:
$$\rho_T(x) := \frac{x}{T} + 1.$$
Then, for $U \in L^1(D)$ and $\alpha > 0$,
\begin{equation} \label{eq:CZ-2}
\int_{\R^d} \rho_T(x)^\alpha \left(\int_{B_t(x)\cap D} |U|\right)^2 \mathrm{d}x \leq C \sup_{y \in D}\biggl(\frac{t + y}{T} + 1\biggr)^{\alpha} t^d  \left( \int_{D} |U| \right)^2.
\end{equation}
\end{enumerate}
\end{lem}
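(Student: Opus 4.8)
The plan is to prove both inequalities by a direct application of the Tonelli theorem, rewriting the inner integral with an indicator and exchanging the order of integration so that the $x$-integral over $\R^d$ can be evaluated explicitly as the Lebesgue measure of a ball (or of an intersection of two balls). Throughout, $|B_t|$ denotes the volume of a ball of radius $t$, so that $|B_t| = |B_1|\, t^d$, and I interpret $\rho_T(x) = |x|/T + 1$ and the supremum in \eqref{eq:CZ-2} in terms of the Euclidean norm.

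For part (a), I would write the inner integral as $\int_{B_t(x) \cap D} |U| = \int_D |U(z)|\, \mathbbm{1}_{\{|x-z|<t\}}\, \mathrm{d}z$ and apply Tonelli to the nonnegative integrand on $\R^d \times D$:
$$\int_{\R^d} \left(\int_{B_t(x) \cap D} |U|\right)\mathrm{d}x = \int_D |U(z)| \left(\int_{\R^d} \mathbbm{1}_{\{|x-z|<t\}}\,\mathrm{d}x\right)\mathrm{d}z = |B_t| \int_D |U|,$$
since the inner $x$-integral equals $|B_t(z)| = |B_t|$ for every $z$. Taking $C = |B_1|$ gives \eqref{eq:CZ-1} with the factor $t^d$.

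For part (b), I would expand the square as a double integral over $D \times D$, namely $\left(\int_{B_t(x)\cap D}|U|\right)^2 = \int_D \int_D |U(z)|\,|U(w)|\, \mathbbm{1}_{\{|x-z|<t\}}\,\mathbbm{1}_{\{|x-w|<t\}}\, \mathrm{d}z\,\mathrm{d}w$, and again apply Tonelli to move the $x$-integral inside. The one point requiring care is the weight: on the support of the two indicators one has $x \in B_t(z)$, hence $|x| < |z| + t$, so that $\rho_T(x)^\alpha < \bigl((|z|+t)/T + 1\bigr)^\alpha \leq \sup_{y \in D}\bigl((|y|+t)/T + 1\bigr)^\alpha$ because $z \in D$. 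Pulling this supremum out of the integral, the remaining $x$-integral is $\int_{\R^d} \mathbbm{1}_{\{|x-z|<t\}}\,\mathbbm{1}_{\{|x-w|<t\}}\,\mathrm{d}x = |B_t(z) \cap B_t(w)| \leq |B_t|$, which bounds the $z,w$-integral by $|B_t| \left(\int_D |U|\right)^2$. Collecting the factors yields exactly \eqref{eq:CZ-2} with $C = |B_1|$.

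The argument is entirely deterministic and elementary, so there is no genuine obstacle beyond bookkeeping. The only nontrivial choice is the order in which the weight is estimated in part (b): one must bound $\rho_T(x)$ by its value at the center $z$ of one of the two balls \emph{before} exchanging the integrals, so that the prefactor $\sup_{y \in D}(\cdots)^\alpha$ separates cleanly from the $z,w$-integration; estimating the weight after integration would not decouple the variables and would not produce the stated constant.
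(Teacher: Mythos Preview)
Your proof is correct and follows essentially the same approach as the paper: rewrite the inner integral with an indicator, apply Tonelli/Fubini, and in part (b) expand the square as a double integral and bound the weight using $|x|<|z|+t$ before estimating $|B_t(z)\cap B_t(w)|\le|B_t|$. The only cosmetic difference is that you make the constant $C=|B_1|$ explicit, whereas the paper leaves it as a dimension-dependent constant.
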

The proof can be found in Appendix~\ref{sec:lemCSestimates}. \\

The first step of the proof of Theorem~\ref{thm:Rdecay} consists in applying the mixing condition Hypothesis~\ref{hyp:mix_hyp} to $R^\eps(z)$ for $z \in B_R \setminus \overline{D^\alpha}$. 
To simplify notations, we introduce
\begin{equation}
\begin{split}
\mathcal{P}(S) := \int_{D} &(a^{hom} - a_\eps(x)) \biggl(\nabla u_\eps(x) - (e_i + \nabla \phi_i(\frac{x}{\eps})) \partial_i u_0(x) \biggr) \cdot \nabla g(x) \mathrm{d}x \\
& - k^2 \int_D (n^{hom} - n_\eps(x)) (u_\eps(x) - u_0(x)) g(x) \mathrm{d}x. \\
\end{split}
\end{equation}
where $a_\eps := a_M + (a_S - a_M)\mathbbm{1}_{S^\eps}$, $n_\eps := n_M + (n_S - n_M)\mathbbm{1}_{S^\eps}$ and $g \in W^{3, \infty}(D)$. \\
By definition, we have then $\mathcal{P}(S) = R^\eps(z)$ if $g(\cdot) = G_0(\cdot, z)$, and $\mathcal{P}(S) = \partial_i R^\eps(z)$ if $g(\cdot) = \partial_i G_0(\cdot, z)$, if $\partial_i G_0(\cdot, z)_{|_D} \in W^{3, \infty}(D)$ where the derivative applies to the second variable. \\
We introduce some additional notations before considering $\partial^{osc} \mathcal{P}(S)$. Let $\ell \geq 1$ and $x \in \R^d$. Let $S$ be a given realization of the scatterer process. We consider another distribution of scatterers $S'$ satisfying the assumptions of Section~\ref{subsec:scatterers} and such that $S \cap (\R^d \setminus B_{\ell}(x)) = S' \cap (\R^d \setminus B_{\ell}(x))$. We name $\mathcal{A}_{\ell}(x) := \{S'~|S~\cap (\R^d \setminus B_{\ell}(x)) = S' \cap (\R^d \setminus B_{\ell}(x))\}.$ \\
For any $S$-dependent measurable random variable $F$, we denote by $F'$ and $\delta F$ the random variables:
$$F' := F(S'),$$
$$\delta F := F(S') - F(S) := F' - F.$$
By definition,
\begin{equation}
|\partial_{S, B_\ell(x)}^{osc}\mathcal{P}(S)| \lesssim \sup_{ S' \in \mathcal{A}_\ell(x)} |\delta \mathcal{P}|
\end{equation}
Here, the notation $\phi^{\eps, \prime}$ stands for $\phi'(\frac{\cdot}{\eps})$. \\
The proof is split into two. We start by deriving a representation formula for $\delta \mathcal{P}$. We then bound each term of the representation formula to get our estimate. 

\subsection{Representation formula for $\delta \mathcal{P}$}

\begin{lem}[Representation formula for $\delta \mathcal{P}$] \label{lem:repreformula}

For $ g \in W^{3, \infty}(D)$,
\begin{equation} \label{eq:repreFormula}
\begin{split}
\delta \mathcal{P} &  =  - \sum_{j=1}^d \int_D \partial_j g (\eps \nabla \phi_j^{\eps,*} + e_j) \cdot \delta a_\eps (\nabla u_\eps' - \sum_{i=1}^d (e_i + \eps \nabla \phi_i^{\eps,\prime}) \partial_i u_0) \\
& \hspace*{0.5cm} +\sum_{j=1}^d  \int_{D} - (\eps \phi_j^{\eps, *} \nabla \partial_j g + \nabla r_j)  \cdot \delta a_\eps \nabla u_\eps' + k^2  \delta n_\eps u_\eps'(r_j + \eps \phi_j^{\eps, *} \partial_j g) \\
& \hspace{0cm} + \sum_{i,j=1}^d\int_{\R^d} (\eps \phi_j^{\eps, *} \nabla \partial_j g \partial_i u_0 \mathbbm{1}_D - \eps \nabla \cdot(\eta_\eps \phi_j^{\eps,*} \partial_j g \partial_i u_0) \mathbbm{1}_D + \nabla R_{ij}) \cdot \delta a_\eps (\eps \nabla \phi_i^{\eps,\prime} + e_i) \\
& \hspace*{0.5cm} + \int_D k^2 \delta n_\eps (u_\eps' - u_0) g - \eps k^2 \beta^{\eps} \delta  u_\eps  \cdot \nabla g \\
\end{split}
\end{equation}
where for $j \in \intInter{1,d}$, $r_j := -\eps  \partial_j g \phi_j^{\eps, *} \mathbbm{1}_D + \tilde{r}_j$ and $\tilde{r}_j$ is the a.s. unique solution in $H^1(B_R)$ of:
\begin{equation} \label{eq:rEps}
\left\{\begin{aligned}
&-\Delta \tilde{r}_j - k^2 n_0 \tilde{r}_j = 0 && \textrm{in}\, B_R \setminus \overline{D}, \\
& -\nabla\cdot\left(a_\eps^* \nabla \tilde{r}_j \right) - k^2 n_\eps \tilde{r}_j = - \eps \nabla \cdot ((a_\eps^* \phi_j^{\eps, *} - \sigma_j^{\eps, *}) \nabla \partial_j g) \\
& \hspace{5cm}+ \eps k^2 \nabla \cdot (\beta^{\eps} g) && \textrm{in}\, D,\\
& \nabla \tilde{r}_j^- \cdot \nu - a_\eps^* \nabla \tilde{r}_j^+ \cdot \nu = -\eps a_\eps^* \phi_j^{\eps, *} \nabla \partial_j g \cdot \nu + \eps (\nabla \cdot \sigma_j^{\eps,*}) \partial_j g^+ \cdot \nu && \textrm{on}\, \partial D,\\
& \nabla \tilde{r}_j \cdot \nu = \Lambda(\tilde{r}_j) && \textrm{on}\, \partial B_R,\\
\end{aligned} \right.
\end{equation} 
and for $i,j \in \intInter{1,d}$, $R_{ij}$ is the a.s. unique solution in $\dot{H}(\R^d) := \{v \in H^1_{loc}(\R^d)~|~\nabla v \in L^2(\R^d)\} / \R$ of:
\begin{equation} \label{eq:rij}
\left\{\begin{aligned}
&-\nabla \cdot a(\frac{\cdot}{\eps})^* \nabla R_{ij} = 0 && \textrm{in}\, \R^d \setminus \overline{D}, \\
& -\nabla\cdot a_\eps^* \nabla R_{ij}  = -\eps \nabla \cdot (a_\eps^* \phi_j^{\eps, *} - \sigma_j^{\eps, *}) \nabla(\partial_j g \partial_i u_0) \\
& \hspace{3cm} + \nabla \cdot  a_\eps^* \nabla(\eta_\eps \phi_j^{\eps, *} \partial_j g \partial_i u_0 )  && \textrm{in}\, D,\\
& a_\eps^* \nabla R_{ij}^- \cdot \nu - a_\eps^* \nabla R_{ij}^+ \cdot \nu = -\eps (a_\eps^* \phi_j^{\eps, *} - \sigma_j^{\eps, *}) \nabla(\partial_j g \partial_i u_0) \cdot \nu \\
& \hspace{5cm} + \eps \nabla \cdot (\sigma_j^{\eps, *} \partial_j g \partial_i u_0) \cdot \nu \\
& \hspace{5cm} + \eps a_\eps^* \nabla(\phi_j^{\eps, *} \partial_j g \partial_i u_0 ) \cdot \nu  && \textrm{on}\, \partial D.\\
\end{aligned} \right.
\end{equation}

\end{lem}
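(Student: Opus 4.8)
The plan is to compute the sensitivity $\delta\mathcal{P}=\mathcal{P}(S')-\mathcal{P}(S)$ by separating the contributions that are already localized in $B_\ell(x)$ — namely those carrying a factor $\delta a_\eps$ or $\delta n_\eps$, which vanish off $B_\ell(x)$ since $a_\eps,n_\eps$ differ only where $S,S'$ differ — from the non-local contributions carrying $\delta u_\eps$ or $\eps\nabla\delta\phi_i^\eps$, and then to rewrite these last ones, by duality, as localized terms as well. This is exactly the commutator/sensitivity strategy of \cite{duerinckx2020robustness}, adapted here to the Helmholtz transmission problem and to the bounded domain $D$.

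First I would expand $\delta\mathcal{P}$ by the product rule, systematically attaching $\delta$ to the $S$-dependent factors $a_\eps$, $n_\eps$, $\phi_i^\eps$, $u_\eps$ while evaluating the complementary factor at $S'$ (the deterministic objects $u_0$, $a^{hom}$, $n^{hom}$, $g$ carry no $\delta$). This produces three families: (i) terms already proportional to $\delta a_\eps$ or $\delta n_\eps$, which are kept; (ii) terms proportional to $\eps\nabla\delta\phi_i^\eps$ tested against $\nabla g$; and (iii) terms proportional to $\nabla\delta u_\eps$ or $\delta u_\eps$ tested against $\nabla g$ or $g$. Families (ii) and (iii) are non-local and must be transformed.

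For these I would use the equations satisfied by the increments. Subtracting the corrector equations for $S'$ and $S$ gives, in the divergence sense,
\[
-\nabla\cdot\bigl(a_\eps'\,\eps\nabla\delta\phi_i^\eps\bigr)=\nabla\cdot\bigl(\delta a_\eps\,(e_i+\eps\nabla\phi_i^\eps)\bigr),
\]
while subtracting the equations \eqref{eq:mainBR} for $u_\eps'$ and $u_\eps$ (the incident data and the DtN condition cancel) gives
\[
-\nabla\cdot\bigl(a_\eps'\nabla\delta u_\eps\bigr)-k^2 n_\eps'\,\delta u_\eps=\nabla\cdot(\delta a_\eps\nabla u_\eps)+k^2\delta n_\eps\,u_\eps,
\]
both with right-hand sides supported in $B_\ell(x)$. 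The fields $\tilde r_j$ (hence $r_j$) and $R_{ij}$ are precisely the adjoint test functions one pairs against these increments: $\tilde r_j$ solves the adjoint Helmholtz transmission problem \eqref{eq:rEps} (with $a_\eps^\ast$), dual to the operator acting on $\delta u_\eps$, and $R_{ij}$ solves the adjoint whole-space corrector-type problem \eqref{eq:rij}, dual to the operator acting on $\eps\delta\phi_i^\eps$. Their sources and transmission/flux data are built from the adjoint correctors $\phi^{\eps,\ast}$, $\sigma^{\eps,\ast}$ and the boundary-layer cut-off $\eta_\eps$ so that, testing the increment equations against $\tilde r_j,R_{ij}$ and conversely testing \eqref{eq:rEps}–\eqref{eq:rij} against $\delta u_\eps,\eps\delta\phi_i^\eps$, families (ii)–(iii) are reproduced up to terms proportional to $\delta a_\eps,\delta n_\eps$. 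Using the skew-symmetry of $\sigma$ (so that $\nabla\cdot\sigma_i=q_i$) and the relation $\nabla\cdot\beta=n-n^{hom}$ to put the gradient-of-corrector terms in divergence form, I would integrate by parts, collect the interior terms together with the $\partial D$ and $\partial B_R$ surface terms, and assemble \eqref{eq:repreFormula}.

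The main obstacle is the boundary bookkeeping. Since $G_0$ (and hence $g$) is not compactly supported in $D$, the dual problems must carry transmission jumps across $\partial D$ and the DtN condition on $\partial B_R$; one has to verify that every surface integral generated by integrating the $\delta u_\eps$ and $\eps\delta\phi_i^\eps$ pairings by parts is matched exactly by the flux-jump and DtN data prescribed in \eqref{eq:rEps} and \eqref{eq:rij}, leaving no uncontrolled boundary term. The extended correctors $\sigma^{\eps,\ast}$ and $\beta^\eps$ enter here, in the jump data, and the $\eta_\eps$-localization in \eqref{eq:rij} plays the same lifting role as in the proof of Proposition~\ref{prop:2scErrDecay}. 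A further source of terms is the mismatch between $a_\eps$ and $a_\eps'$ (respectively $n_\eps$, $n_\eps'$) in the two paired operators, which generates additional $\delta a_\eps$, $\delta n_\eps$ factors — these are harmless because localized, and account for the primed quantities $\nabla u_\eps'$, $u_\eps'$, $\nabla\phi_i^{\eps,\prime}$ appearing in \eqref{eq:repreFormula}. The only genuinely non-local contribution that cannot be converted, the lower-order Helmholtz remainder $-\eps k^2\beta^\eps\,\delta u_\eps\cdot\nabla g$, is retained as such and will later be controlled separately through the prefactor $\eps$, the slow (at most $\mu_d$) growth of $\beta^\eps$, and an $L^2$ estimate on $\delta u_\eps$.
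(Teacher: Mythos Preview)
Your proposal is correct and follows the paper's approach: product-rule expansion of $\delta\mathcal{P}$, the corrector identities $(a^{hom}-a_\eps)^*e_j=\eps a_\eps^*\nabla\phi_j^{\eps,*}-\eps\nabla\cdot\sigma_j^{\eps,*}$ and $n^{hom}-n_\eps=-\eps\nabla\cdot\beta^\eps$, and duality against $\tilde r_j$, $R_{ij}$ to convert the non-local $\delta u_\eps$, $\eps\nabla\delta\phi_i^\eps$ contributions into localized $\delta a_\eps$, $\delta n_\eps$ terms, with the boundary bookkeeping handled exactly as you describe. One correction: since the auxiliary problems \eqref{eq:rEps}--\eqref{eq:rij} are posed with the \emph{unprimed} coefficient $a_\eps^*$, you should write the increment equations with the unprimed operator and primed source, i.e.\ $-\nabla\cdot a_\eps\nabla\delta u_\eps-k^2n_\eps\delta u_\eps=\nabla\cdot(\delta a_\eps\nabla u_\eps')+k^2\delta n_\eps u_\eps'$ and $-\nabla\cdot a\nabla\delta\phi_i=\nabla\cdot\delta a(\nabla\phi_i'+e_i)$, rather than your primed-operator versions; with this choice the duality pairing is exact and no additional ``$a_\eps/a_\eps'$ mismatch'' terms arise --- which is precisely why \eqref{eq:repreFormula} carries the primed fields $u_\eps'$, $\phi_i^{\eps,\prime}$ throughout.
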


\begin{rem}
Note that by the divergence theorem \cite[theorem 3.24]{monk2003finite}, since $\nabla \cdot (\nabla \cdot \sigma_j) = 0$ for all $j \in \intInter{1,d}$, we have that the normal trace $(\nabla \cdot \sigma_j) \cdot \nu \in H^{-1/2}(\partial D)$.
\end{rem}

\begin{proof}[Proof of the Lemma] 

By direct computation, 
\begin{equation}
\begin{split}
\delta \mathcal{P} & =  -\int_D \delta a_\eps \left(\nabla u_\eps' - \sum_{i=1}^d (e_i + \eps \nabla \phi_i^{\eps, \prime})\partial_i u_0  \right) \cdot \nabla g \\
& + \int_D (a^{hom} - a_\eps) \delta \left(\nabla u_\eps - \sum_{i=1}^d (e_i + \eps \nabla \phi_i^\eps)\partial_i u_0\right) \cdot \nabla g \\
& \hspace{1cm} + \int_D k^2 \delta n_\eps(u_\eps' - u_0) - k^2 (n^{hom} - n_\eps) \delta u_\eps g
\end{split}
\end{equation}
First notice that for $j \in \intInter{1,d}$, $(a^{hom} - a_\eps)e_i$ can be rewritten as: 
$$(a^{hom} - a_\eps)^*e_j = \eps a_\eps \nabla \phi_j^{\eps,*} - \eps \nabla \cdot \sigma_j^{\eps, *}.$$
Moreover $\delta \phi_i$ verifies in $\R^d$
$$ - \nabla \cdot a \nabla \delta \phi_i = \nabla \cdot \delta a(\nabla \phi_i' + e_i) $$
and $\delta u_\eps$ is a.s. the unique solution in $H^1(B_R)$ of:
\begin{equation} \label{eq:deltaUeps}
\left\{\begin{aligned}
&-\Delta \delta u_\eps - k^2 n_0 \delta u_\eps = 0 && \textrm{in}\, \R^d \setminus \overline{D}, \\
& -\nabla\cdot\left(a_\eps \nabla \delta u_\eps \right) - k^2 n_\eps \delta u_\eps = \nabla \cdot (\delta a_\eps \nabla u_\eps') + k^2 \delta n_\eps u_\eps' && \textrm{in}\, D,\\
& \nabla \delta u_\eps^- \cdot \nu - a_\eps \nabla \delta u_\eps^+ \cdot \nu = \delta a_\eps \nabla {u_\eps'}^+ \cdot \nu  && \textrm{on}\, \partial D,\\
& \nabla \delta u_\eps \cdot \nu = \Lambda(\delta u_\eps)  && \textrm{on}\, \partial B_R.\\
\end{aligned} \right.
\end{equation}
We thus get for $i,j \in \intInter{1,d}$
\begin{equation}
\begin{split}
\nabla & \phi_j^{\eps, *} \cdot a_\eps (\nabla \delta u_\eps - \nabla \delta \phi_i^\eps) \\
& = \nabla \cdot (\phi_j^{\eps, *} a_\eps (\nabla \delta u_\eps - \nabla \delta \phi_i^\eps)) - \phi_j^{\eps, *} \nabla \cdot(a_\eps (\nabla \delta u_\eps - \nabla \delta \phi_i^\eps) ) \\
& = \nabla \cdot (\phi_j^{\eps, *} a_\eps(\nabla \delta u_\eps - \nabla \delta \phi_i^\eps))  \\
& \hspace{1cm}+ \phi_j^{\eps, *} (\nabla \cdot (\delta a_\eps \nabla u_\eps') + k^2 \delta n_\eps u_\eps' + k^2 n_\eps \delta u_\eps) \\
& \hspace{1cm} - \phi_j^{\eps, *} \nabla \cdot ( \delta a_\eps (\nabla \phi_i^{\eps, \prime} + e_i)) \\
& = \nabla \cdot (\phi_j^{\eps, *} a_\eps(\nabla \delta u_\eps - \nabla \delta \phi_i^\eps)) \\
& \hspace{1cm}+ \nabla \cdot (\phi_j^{\eps, *} \delta a_\eps \nabla u_\eps') - \nabla \phi_j^{\eps, *} \cdot \delta a_\eps \nabla u_\eps' + k^2 \delta n_\eps u_\eps' \phi_j^{\eps, *} + k^2 n_\eps \delta u_\eps \phi_j^{\eps, *} \\
& \hspace{1cm} - \nabla \cdot (\phi_j^{\eps, *} \delta a_\eps (\nabla \phi_i^{\eps,\prime} + e_i)) + \nabla \phi_j^{\eps, *} \cdot \delta a_\eps (\nabla \phi_i^{\eps, \prime} + e_i).
\end{split}
\end{equation}
By skew-symmetry, it also holds for $i,j \in \intInter{1,d}$
$$(\nabla \cdot \sigma_j^{\eps, *}) \cdot \nabla  (\delta  u_\eps - \nabla \delta \phi_i^\eps) =  - \nabla \cdot \left(\sigma_j^{\eps, *} (\nabla \delta u_\eps - \nabla \delta \phi_i^\eps)\right).$$
Similarly $n^{hom} - n_\eps = - \eps \nabla \cdot \beta^\eps$ and thus,
$$(n^{hom} - n_\eps) \delta u_\eps = - \eps \nabla \cdot (\beta^\eps \delta u_\eps) + \eps \beta^\eps \cdot \nabla \delta u_\eps. $$
Therefore we obtain
\begin{equation}
\begin{split}
\delta \mathcal{P} & = \sum_{j=1}^d - \int_D (\eps \nabla \phi_j^{\eps, *} + e_j) \partial_j g \cdot \delta a_\eps (\nabla u_\eps' - \sum_{i=1}^d (\eps \nabla \phi_i^{\eps,\prime} + e_i)\partial_i u_0) \\
& \hspace{0.5cm} - \eps \int_D \phi_j^{\eps, *} \nabla \partial_j g \cdot \delta a_\eps \nabla u_\eps'  + \eps \int_{\partial D} \phi_j^{\eps, *} \partial_j g \cdot \delta a_\eps \nabla u_\eps' \cdot \nu \\  
& \hspace{0.5cm} + \eps \int_D  k^2 \delta n_\eps u_\eps' \phi_j^{\eps, *} \partial_j g + k^2 \int_D \delta n_\eps (u_\eps' - u_0) g \\
& \hspace{0.5cm} + \sum_{i=1}^d \eps \int_D \phi_j^{\eps, *} \nabla (\partial_j g \partial_i u_0) \cdot \delta a_\eps (\eps \nabla \phi_i^{\eps,\prime} + e_i) \\
& \hspace{0.5cm} -\eps \int_{\partial D} \phi_j^{\eps, *} \partial_j g \partial_i u_0 \delta a_\eps (\nabla \phi_i^{\eps,\prime} + e_i) \cdot \nu \\
& \hspace{0.5cm} + \eps^2 \int_D \nabla (\partial_j g \partial_i u_0) \cdot (\phi_j^{\eps, *} a_\eps + \sigma_j^{\eps, *}) \nabla \delta \phi_i^\eps \\
& \hspace{0.5cm} - \eps^2 \int_{\partial D} \partial_j g \partial_i u_0 (\phi_j^{\eps, *} a_\eps + \sigma_j^{\eps, *} ) \nabla \delta \phi_i^{\eps} \cdot \nu \\
& \hspace{0.5cm} - \eps \int_D \nabla \partial_j g \cdot (a_\eps \phi_j^{\eps, *} + \sigma_j^{\eps, *}) \nabla \delta u_\eps + \eps \int_{\partial D} \partial_j g (a_\eps \phi_j^{\eps, *} + \sigma_j^{\eps, *}) \nabla \delta u_\eps \cdot \nu \\
& \hspace{0.5cm} + \eps \int_D k^2 n_\eps \delta u_\eps \phi_j^{\eps, *} \partial_j g - \eps \int_D k^2 \beta^\eps \cdot \nabla \delta u_\eps g \\
& \hspace{0.5cm} - \eps \int_D k^2 \beta^\eps \delta u_\eps \cdot \nabla g + \eps \int_{\partial D} k^2  g \beta^\eps \delta u_\eps \cdot \nu. \\
\end{split}
\end{equation}
We simplify the terms depending on $\delta u_\eps$ by introducing the adjoint problem \eqref{eq:rEps}. By Proposition~\ref{prop:stabAbsorb_smallKR}, since $\phi_j^{\eps, *}$, $\sigma_j^{\eps, *}$, $\beta^\eps \in H^1_{loc}(\R^d)$ and $g \in H^2(D)$, there exists a unique solution $\tilde{r}_j$ to \eqref{eq:rEps}. \\
For $h \in H^1(B_R)$, $\tilde{r}_j$ verifies:    
\begin{equation} \label{eq:repsFV}
\begin{split}
\int_{B_R \setminus \overline{D}} \nabla \tilde{r}_j & \cdot \nabla \overline{h} - k^2 n_0 \tilde{r}_j \overline{h} - \left \langle \Lambda(\tilde{r}_j), h \right\rangle_{H^{-\frac{1}{2}}(\partial B_R), H^{\frac{1}{2}}(\partial B_R) }  + \int_D a_\eps^* \nabla \tilde{r}_j \cdot \nabla \overline{h} - k^2 n_\eps \tilde{r}_j \overline{h}\\
& = \eps \int_D ( a_\eps^* \phi_j^{\eps, *} - \sigma_j^{\eps, *}) \nabla \partial_j g \cdot \nabla \overline{h} - k^2 \beta^{\eps} g \cdot \nabla \overline{h} \\
& \hspace{1cm} + \eps \int_{\partial D} \sigma_j^{\eps, *} \partial_j g \nabla \overline{h}\cdot \nu + k^2 \beta^\eps g \cdot \nu \overline{h}. \\
\end{split}
\end{equation}
Note that we used the skew-symmetry of $\sigma_j$ to get the integration by parts
\begin{equation} \label{eq:IPP_sigma}
\left\langle \sigma_j^{\eps, *}, \nabla (\partial_j g \overline{h})\right\rangle_{-\frac{1}{2},\frac{1}{2}}= - \left\langle (\nabla \cdot \sigma_j^{\eps, *})\cdot\nu,   \partial_j g \overline{h} \right\rangle_{-\frac{1}{2},\frac{1}{2}}
\end{equation}
Moreover $\delta u_\eps$ verifies for $h \in H^1(B_R)$
\begin{equation}
\begin{split}
\int_{B_R \setminus \overline{D}} \nabla \delta u_\eps & \cdot \nabla \overline{h} - k^2 n_0 \delta u_\eps \overline{h} - \left \langle \Lambda(\delta u_\eps), h \right\rangle_{H^{-\frac{1}{2}}(\partial B_R), H^{\frac{1}{2}}(\partial B_R) } \\ & \hspace{-1cm} + \int_D a_\eps \nabla \delta u_\eps \cdot \nabla \overline{h} - k^2 n_\eps \delta u_\eps \overline{h}
 = \int_D - \delta a_\eps \nabla u_\eps' \cdot \nabla \overline{h} + k^2 \delta n_\eps u_\eps' \overline{h},
\end{split}
\end{equation}
and
\begin{equation}
\begin{split}
- \int_D a_\eps & \nabla \delta u_\eps \cdot \nabla (\phi_j^{\eps, *} \partial_j g) + \int_D k^2 n_\eps \delta u_\eps \phi_j^{\eps, *} \partial_j g\\
&  = - \int_{\partial D} a_\eps \phi_j^{\eps, *} \partial_j g \nabla \delta u_\eps \cdot \nu + \int_D \delta a_\eps \nabla u_\eps' \cdot \nabla (\phi_j^{\eps, *} \partial_j g) \\ & \hspace{1cm} - \int_D k^2 \delta n_\eps u_\eps' \phi_j^{\eps, *} \partial_j g 
- \int_{\partial D} \delta a_\eps \nabla u_\eps' \cdot \nu \phi_j^{\eps, *} \partial_j g.
\end{split}
\end{equation}
Therefore $\delta u_\eps$ satisfies
\begin{equation} \label{eq:deltaUepsFV}
\begin{split}
\int_{B_R \setminus \overline{D}}&  \nabla \delta u_\eps \cdot \nabla r_j - k^2 n_0 \delta u_\eps r_j  - \left \langle \Lambda(\delta u_\eps), \overline{r_j} \right\rangle_{H^{-\frac{1}{2}}(\partial B_R), H^{\frac{1}{2}}(\partial B_R) }  \\
& \hspace{-0.5cm}+ \int_D a_\eps \nabla \delta u_\eps \cdot \nabla r_j - k^2 n_\eps \delta u_\eps r_j 
=\int_D - \delta a_\eps \nabla u_\eps' \cdot \nabla r_j + k^2 \delta n_\eps u_\eps' r_j \\& \hspace{1cm} - \int_{\partial D} a_\eps \phi_j^{\eps, *} \partial_j g \nabla \delta u_\eps \cdot \nu - \int_{\partial D} \delta a_\eps \nabla u_\eps' \cdot \nu \phi_j^{\eps, *} \partial_j g.
\end{split}
\end{equation}
We combine \eqref{eq:repsFV} for $\overline{h} = \delta u_\eps$ and \eqref{eq:deltaUepsFV} to get
\begin{equation}
\begin{split}
- \int_D \delta a_\eps & \nabla u_\eps' \cdot \nabla r_j + k^2 \delta n_\eps u_\eps' r_j 
= \eps \int_{\partial D} \phi_j^{\eps, *} \partial_j g^+\delta a_\eps \nabla u_\eps' \cdot \nu 
 \\& \hspace{-1cm} - \eps \int_D \nabla \partial_j g \cdot (a_\eps \nabla \phi_j^{\eps, *} + \sigma_j^{\eps, *})\nabla \delta u_\eps + \eps \int_{\partial D} \partial_j g (a_\eps \nabla \phi_j^{\eps, *} + \sigma_j^{\eps, *})\nabla u_\eps \cdot \nu\\
&  \hspace{-1cm }+ \eps \int_D k^2 \delta u_\eps \phi_j^{\eps, *} \partial_j g - \eps \int_D k^2 \beta^{\eps} \cdot \nabla \delta u_\eps g +\eps \int_{\partial D} k^2 g \beta^\eps \delta u_\eps \cdot \nu.
\end{split}
\end{equation}
We deal now with the terms depending on $\delta \phi_i$. $R_{ij}$ satisfies for all $h \in \dot{H}(\R^d)$,
\begin{equation} \label{eq:FVri}
\begin{split}
\int_{\R^d}a(\frac{\cdot}{\eps})^* \nabla R_{ij} \cdot \nabla \overline{h}  \\ & \hspace{-1cm}=\eps \int_D (a_\eps^* \phi_j^{\eps, *} - \sigma_j^{\eps, *}) \nabla(\partial_j g \partial_i u_0) \cdot \nabla \overline{h} - (\nabla \cdot (\eta_\eps \sigma_j^{\eps, *} \partial_j g \partial_i u_0)) \cdot \nabla \overline{h} \\
&  -\eps\int_D a_\eps^* \nabla (\eta_\eps \phi_j^{\eps, *} \partial_j g \partial_i u_0) \cdot \nabla \overline{h} 
\end{split}
\end{equation} 
where we used that $\nabla \cdot \left(\nabla \cdot (\eta_\eps  \sigma_j^{\eps, *} \partial_j g \partial_i u_0)\right) = 0$ by the skew-symmetry of $\sigma_j$. Subsequently the sesquilinear and linear form associated to \eqref{eq:rij} are respectively coercive and continuous in $\dot{H}(\R^d)$ equipped with the semi-norm $\norme{\cdot}_{\dot{H}(\R^d)} = \norme{\nabla \cdot}_{L^2(\R^d)}$ (cf \cite[Chapter~2.5]{nedelec2001acoustic} for more details). Moreover,
\begin{equation} \label{eq:BordDeltaR}
\begin{split}
-\int_{\partial D} \phi_j^{\eps, *}  & \partial_j g \partial_i u_0 \delta a_\eps (\nabla \phi_i^{\eps,\prime} + e_i) \cdot \nu - \int_{\partial D} \partial_j g \partial_i u_0 \phi_j^{\eps, *} a_\eps \nabla \delta \phi_i^{\eps} \cdot \nu \\
& = - \int_{D} \delta a_\eps (\nabla \phi_i^{\eps,\prime} + e_i) \cdot \nabla (\eta_\eps \phi_j^{\eps, *}  \partial_j g \partial_i u_0) - \nabla \delta \phi_i^{\eps}  \cdot a_\eps^* \nabla (\eta_\eps \phi_j^{\eps} \partial_j g \partial_i u_0)
\end{split}
\end{equation}
and by the skew-symmetry of $\sigma_j$,
\begin{equation} \label{eq:IPP_sigma2}
\begin{split}
\int_{D} (\nabla \cdot (\eta_\eps \sigma_j^{\eps, *} \partial_j g \partial_i u_0)) \cdot \nabla \delta \phi_i^\eps & = \int_{\partial D} (\nabla \cdot (\sigma_j^{\eps, *} \partial_j g \partial_i u_0)) \cdot \nu \delta \phi_i^\eps \\
& = - \int_{\partial D} \partial_j g \partial_i u_0 \sigma_j^{\eps, *} \nabla \delta \phi_i^\eps \cdot \nu.\\
\end{split}
\end{equation}
We combine \eqref{eq:FVri} for $\overline{h} = \eps \delta \phi_i^\eps$ (which is a suitable test function), \eqref{eq:BordDeltaR} and \eqref{eq:IPP_sigma2} to get the desired result.

\end{proof}

\subsection{Proof of Theorem~\ref{thm:Rdecay}}

\begin{proof}[Proof of Theorem~\ref{thm:Rdecay}]

Let 
$$\norme{I}_\ell^2 := \int_{\R^d} \ell^{-d} \sup_{S' \in \mathcal{A}_\ell(x)} \biggl| \sum_{j=1}^d \int_{D\cap B_{\eps \ell}(x)}  \partial_j g (\eps \nabla \phi_j^{\eps, *} + e_j) \cdot \delta a_\eps (\nabla u_\eps' - \sum_{i=1}^d (e_i + \eps \nabla \phi_i^{\eps,\prime}) \partial_i u_0) \biggr|^2 \mathrm{d}x.$$
By Cauchy-Schwarz inequality we obtain
\begin{equation}
\begin{split}
\norme{I}_\ell^2 & \lesssim \int_{\R^d} \ell^{-d}  \biggl(\sum_{j=1}^d  \int_{D\cap B_{\eps \ell}(x)} |\partial_j g (\eps \nabla \phi_j^{\eps,*} + e_j)|^2 \biggr) \\ & \hspace{1cm}\sup_{S' \in \mathcal{A}_\ell(x)} \biggl(\int_D |\nabla u_\eps' - \sum_{i=1}^d(e_i + \eps \nabla \phi_i'(\frac{\cdot}{\eps})) \partial_i u_0|^2 \biggr)\mathrm{d}x  \\
& \lesssim \int_{\R^d} \ell^{-d} \biggl(\sum_{j=1}^d \int_{D\cap B_{\eps \ell}(x)} |\partial_j g (\eps \nabla \phi_j^{\eps,*} + e_j)|^2 \biggr) \\ & \hspace{1cm} \sup_{S' \in \mathcal{A}_\ell(x)} \biggl(\norme{u_\eps' - u_0 - \sum_{i=1}^d\eps \phi_i^{\eps,\prime} \partial_i u_0}_{H^1(D)}^2 \biggr)\mathrm{d}x. 
\end{split}
\end{equation}
Using Proposition~\ref{prop:2scErrDecay}, we have moreover
$$\sup_{S' \in \mathcal{A}_\ell(x)} \norme{u_\eps' - u_0 - \sum_{i=1}^d \eps \phi_i^{\eps,\prime} \partial_i u_0}_{H^1(D)}^2 \lesssim \eps \mu_d(\frac{1}{\eps}) \sup_{S' \in \mathcal{A}_\ell(x)} (\chi_\eps')^2 \norme{u_0}_{W^{2, \infty}(D)}^2. $$
As mentioned in \cite[Remark~2.1]{duerinckx2020robustness}, by following the proof of \cite[Theorem~4]{Gloria2019}, one has that 
\begin{equation} \label{eq:Cprime}
\mathcal{C}'(z) \lesssim \mathcal{C}(z).
\end{equation} 
where $\mathcal{C}$ is defined in Proposition~\ref{prop:corrbounds}. It particular, this implies that $\sup_{S' \in \mathcal{A}_\ell(x)} (\chi_\eps')^2$ can be bounded  by $\chi_\eps^2$, which is a random variable independent of $S'$, that satisfies the integrability \eqref{eq:Exp_corrector_bounds}. Combining this with Lemma~\ref{lem:CSestimates} applied to $|\partial_j g (\eps \nabla \phi_j^\eps) + e_j|^2$ with $t = \eps \ell$ and the bounds on the gradient of the corrector yields
\begin{equation}
\begin{split}
\norme{I}_\ell^2 & \lesssim \left(\int_{\R^d} \ell^{-d} \biggl(\sum_{j=1}^d \int_{D\cap B_{\eps \ell}(x)} |\partial_j g (\eps \nabla \phi_j^\eps + e_j)|^2 \biggr)\mathrm{d}x\right)\biggl(\eps \mu_d(\frac{1}{\eps}) (\chi_\eps)^2 \norme{u_0}_{W^{2, \infty}(D)}^2 \biggr)\\
& \lesssim \biggl(\eps^{d} \sum_{j=1}^d \int_{D} |\partial_j g (\eps \nabla \phi_j^\eps + e_j)|^2 \biggr) \biggl(\eps \mu_d(\frac{1}{\eps}) (\chi_\eps)^2 \norme{u_0}_{W^{2, \infty}(D)}^2 \biggr) \\
& \lesssim \eps^{d+1} \mu_d(\frac{1}{\eps}) \widetilde{\chi_\eps^4}^2 (\chi_\eps)^2 \norme{u_0}_{W^{2, \infty}(D)}^2 \norme{g}_{W^{2, \infty}(D)}^2,
\end{split}
\end{equation}
with $\widetilde{\chi_\eps^4}$ defined as:
$$\widetilde{\chi_\eps^4} := \left(\eps^d \sum_{z \in P_\eps(D)}(1+  r_*(z))^{2d} \right)^{\frac{1}{2}}.$$
In view of Lemma~\ref{lem:holder}, $\widetilde{\chi_\eps^4}^2 (\chi_\eps)^2$ satisfies the expected stochastic integrability.

Let 
$$\norme{II}_\ell^2 := \int_{\R^d} \ell^{-d} \sup_{S' \in \mathcal{A}_\ell(x)} \left|\sum_{j=1}^d \int_{D} -(\eps \phi_j^{\eps, *} \nabla \partial_j g + \nabla r_j)  \cdot \delta a_\eps \nabla u_\eps' + k^2  \delta n_\eps u_\eps'(r_j + \eps \phi_j^{\eps, *} \partial_j g) \right|^2 \mathrm{d}x.$$
Similarly to the analysis done with $v_\eps$ in the proof of Proposition~\ref{prop:2scErrDecay}, one has that $r_j$ satisfies a similar decay rate:
\begin{equation}
\norme{r_j}_{H^1(D)} \lesssim \eps^{\frac{1}{2}} \mu_d(\frac{1}{\eps})^{\frac{1}{2}} \widetilde{\chi_\eps^5} \norme{g}_{W^{2, \infty}(D)},
\end{equation}
for some random variable $\widetilde{\chi_\eps^5}$ satisfying \eqref{eq:Exp_corrector_bounds}. \\
To get rid of the dependency with respect to $S'$, note that $\delta u_\eps$ also satisfies
\begin{equation}
\left\{\begin{aligned}
&-\Delta \delta u_\eps - k^2 \delta u_\eps = 0 && \textrm{in}\, B_R \setminus \overline{D}, \\
& -\nabla\cdot\left(a'_\eps \nabla \delta u_\eps \right) - k^2 \delta u_\eps = -\nabla \cdot (\delta a_\eps \nabla u_\eps) - k^2 \delta n_\eps u_\eps && \textrm{in}\, D,\\
& \nabla \delta u_\eps^- \cdot \nu - a'_\eps \nabla \delta u_\eps^+ \cdot \nu = -\delta a_\eps \nabla u_\eps^+ \cdot \nu   && \textrm{on}\, \partial D,\\
& \nabla \delta u_\eps \cdot \nu = \Lambda(\delta u_\eps)  && \textrm{on}\, \partial B_R.\\
\end{aligned} \right.
\end{equation}
In particular, by Proposition~\ref{prop:stabAbsorb_smallKR}
\begin{equation} \label{eq:estimate_deltaU}
\norme{\delta u_\eps}_{H^1(D)} \lesssim \norme{u_\eps}_{H^1(B_{\eps \ell}(x) \cap D)}.
\end{equation}
This gives
\begin{equation} \label{eq:estimate_uxl}
\begin{split}
\sup_{S' \in \mathcal{A}_\ell(x)} \norme{u_\eps'}_{H^1(B_{\eps \ell}(x) \cap D)} & \lesssim \sup_{S' \in \mathcal{A}_\ell(x)} \norme{\delta u_\eps}_{H^1(D)} + \norme{u_\eps}_{H^1(B_{\eps \ell}(x) \cap D)} \\
& \lesssim \norme{u_\eps}_{H^1(B_{\eps \ell}(x) \cap D)}.
\end{split}
\end{equation}
We can finally compute $\norme{II}_\ell^2$ using the Lemma~\ref{lem:CSestimates} on $u_\eps$ and the bounds on the correctors which yields
\begin{equation}
\begin{split}
\norme{II}_\ell^2 & \lesssim \biggl(\sum_{j=1}^d \int_D |\nabla r_j|^2 + \eps^2|\phi_j^{\eps, *} \nabla \partial_j g|^2 + |r_j|^2 + \eps^2 |\phi_j^{\eps, *}\partial_j  g|^2 \biggr)  \\ 
& \hspace{3cm} \times \int_{\R^d} \ell^{-d}\biggl( \sup_{S' \in \mathcal{A}_\ell(x)} \norme{u_\eps'}_{H^1(B_{\eps \ell}(x) \cap D)}^2 \biggr) \mathrm{d}x \\
& \lesssim  \eps^{d+1} \mu_d(\frac{1}{\eps}) \widetilde{\chi_\eps^6}^2 \norme{u_0}_{W^{2, \infty}(D)}^2 \norme{g}_{W^{2, \infty}(D)}^2,
\end{split}
\end{equation}
for a random variable $\widetilde{\chi_\eps^6}$ satisfying the desired integrability. \\
\noindent Let 
\begin{equation}
\begin{split}
\norme{III}_\ell^2 = \int_{\R^d} \ell^{-d} \sup_{S' \in \mathcal{A}_\ell(x)}\\ &\hspace{-2.5cm} \left|\sum_{i,j=1}^d \int_D (\eps \phi_j^{\eps, *} \nabla \partial_j g \partial_i u_0 - \eps\nabla \cdot(\eta_\eps \phi_j^{\eps,*} \partial_j g \partial_i u_0) + \nabla R_{ij}) \cdot \delta a_\eps (\eps \nabla \phi_i^{\eps,\prime} + e_i)\right|^2 \mathrm{d}x. 
\end{split}\end{equation}
To estimate $\norme{III}_\ell^2$, we follow the steps of the proof of \cite[Proposition~2.6]{duerinckx2020robustness}. By a change of variable $y \mapsto \frac{y}{\eps}$ in the integral in $D$ and by \cite[Lemma~2.9]{duerinckx2020robustness}, we obtain
\begin{equation}
\begin{split}
& \norme{III}_\ell^2 \lesssim \eps^{2d} \int_{\R^d}\sum_{i,j=1}^d \fint_{B_{r_*(x)}(x)} \eps^2\Big[ |\phi_j^{\eps, *} \nabla \partial_j g \partial_i u_0 \mathbbm{1}_D|^2(\eps\cdot) \\ &  \hspace{1cm}+ \eps^2 |\nabla \cdot(\eta_\eps \phi_j^{\eps,*} \partial_j g \partial_i u_0 \mathbbm{1}_D|^2(\eps\cdot) + |\nabla R_{ij}|^2(\eps\cdot)
\Big] \sup_{S' \in \mathcal{A}_\ell(x)} \int_{B_{2 \ell+ r_*(x)}(x)} | \nabla \phi_i' + e_i|^2 \mathrm{d}x. \\
\end{split}
\end{equation}
Moreover from \cite[Proof~of~Theorem~4]{gloria2014regularity}, we obtain for $i \in \intInter{1,d}$,
$$\sup_{S'} \int_{B_{2 \ell+ r_*(x)}(x)} |\nabla \phi'_i + e_i|^2 \mathrm{d}x\lesssim  \int_{B_{2 \ell+ r_*(x)}(x)} |\nabla \phi_i + e_i|^2  \lesssim 2^d (\ell + r_*(x))^d \mathrm{d}x. $$
Thus, since for all $x \in \R^d$, $(\ell + r_*(x))^d \lesssim \ell^d r_*(x)^d$, we have
\begin{equation}
\begin{split}
& \norme{III}_\ell^2 \lesssim \eps^{2d} \ell^{d} \sum_{i,j=1}^d \int_{\R^d} r_*(x) \times \\
& \hspace{0.4cm}\left(\fint_{B_{r_*(x)}(x)} \eps^2 |\phi_j^{\eps, *} \nabla \partial_j g \partial_i u_0 \mathbbm{1}_D|^2(\eps\cdot) + \eps^2|\nabla \cdot(\eta_\eps \phi_j^{\eps,*} \partial_j g \partial_i u_0 \mathbbm{1}_D|^2(\eps\cdot) + |\nabla R_{ij}|^2(\eps\cdot) \right) \mathrm{d}x.\\
\end{split}
\end{equation}
We recall the following estimate \cite[(3.8)]{duerinckx2020robustness} in the form: 
If $v$ is the solution in $\dot{H}(\R^d)$ of $-\nabla \cdot a \nabla v = \nabla \cdot h$, with $h \in L^2(D)$, then for all $\alpha$ such that $d < \alpha < 3d $ and for all $T>1$
\begin{equation} \label{eq:CZ-mitia}
\begin{split}
\int_{\R^d} & r_*(x)^d \biggl(\fint_{B_{r_*(x)}(x)} |h|^2 + |\nabla v|^2\biggr)\\
&\lesssim_{\alpha} r_*(0)^{\frac{\alpha}{2}} \biggl(\int_{\R^d} r_*^{2d} \rho_T^{-\alpha} \biggr)^{\frac{1}{2}} \biggl(\int_{\R^d} \rho_T^{\alpha} \biggl(\int_{B(x)}|h|^2 \biggr)^2 \biggr)^{\frac{1}{2}},
\end{split}
\end{equation}
with $\rho_T$ defined in Lemma~\ref{lem:CSestimates}. Note that $R_{ij}$ verifies:
\begin{equation}
\begin{split}
- \nabla & \cdot (a \nabla R_{ij}(\eps \cdot)) \\
& = - \eps \nabla \cdot \biggl(\biggl(\biggl((a_\eps^* \phi_j^{\eps, *} - \sigma_j^{\eps, *}) \nabla(\partial_j g \partial_i u_0) \\
& \hspace{2.5cm} - (\nabla \cdot (\eta_\eps \sigma_j^{\eps, *} \partial_j g \partial_i u_0)) - a_\eps^* \nabla (\eta_\eps \phi_j^{\eps, *} \partial_j g \partial_i u_0) \biggr) \mathbbm{1}_D \biggr) (\eps \cdot) \biggr) \\
& := - \nabla (\cdot H_{ij}\mathbbm{1}_D(\eps \cdot)).
\end{split}
\end{equation}
Thus we can apply \eqref{eq:CZ-mitia} to $R_{ij}$ which yields for any $\alpha$ such that $d < \alpha < 3d $ and $T>1$
\begin{equation}
\begin{split}
\sum_{i,j=1}^d \int_{\R^d} r_*(x)  & \fint_{B_{r_*(x)}(x)} |\nabla R_{ij}|^2(\eps\cdot) \\
& \lesssim_{\alpha} \sum_{i,j=1}^d  r_*(0)^{\frac{\alpha}{2}} \biggl(\int_{\R^d} r_*^{2d} \rho_T^{-\alpha} \biggr)^{\frac{1}{2}} \biggl(\int_{\R^d} \rho_T^{\alpha} \biggl(\int_{B(x)}  |H_{ij} \mathbbm{1}_D|^2(\eps \cdot) \biggr)^2 \biggr)^{\frac{1}{2}} .
\end{split}
\end{equation}
By denoting:
$$ |(F_{ij} \mathbbm{1}_D) (\eps \cdot)|^2 := \eps^2 |\phi_j^{\eps, *} \nabla \partial_j g \partial_i u_0 \mathbbm{1}_D|^2(\eps\cdot) + \eps^2 |\nabla \cdot(\eta_\eps \phi_j^{\eps,*} \partial_j g \partial_i u_0 \mathbbm{1}_D|^2(\eps\cdot),$$
we also have
\begin{equation}
\begin{split}
\sum_{i,j=1}^d \int_{\R^d} r_*(x) & \fint_{B_{r_*(x)}(x)} |F_{ij} \mathbbm{1}_D|^2 (\eps \cdot) \\
& \lesssim_{\alpha} \sum_{i,j=1}^d  r_*(0)^{\frac{\alpha}{2}} \biggl(\int_{\R^d} r_*^{2d} \rho_T^{-\alpha} \biggr)^{\frac{1}{2}} \biggl(\int_{\R^d} \rho_T^{\alpha} \biggl(\int_{B(x)} |F_{ij} \mathbbm{1}_D|^2(\eps \cdot) \biggr)^2 \biggr)^{\frac{1}{2}} .
\end{split}
\end{equation}
Similarly to the analysis done in the proof of Proposition~\ref{prop:2scErrDecay}, one has that
\begin{equation}
\norme{F_{ij}}_{L^2(D)} + \norme{H_{ij}}_{L^2(D)}  \lesssim \eps^{\frac{1}{2}} \mu_d(\frac{1}{\eps})^{\frac{1}{2}} \widetilde{\chi_\eps^7} \norme{u_0}_{W^{2, \infty}(D)}^2 \norme{g}_{W^{2, \infty}(D)}^2,
\end{equation}
for a random variable $\widetilde{\chi_\eps^7}$ satisfying the desired integrability. \\
After changing variables to $x \mapsto \eps x$ in the inner and outer integral and applying \eqref{eq:CZ-2} from Lemma~\ref{lem:CSestimates} (with $U := F_{ij} + H_{ij}$) we obtain
\begin{equation}
\begin{split}
\norme{III}_\ell^2 & \lesssim_{\alpha} \eps^{d} \ell^d r_*(0)^{\frac{\alpha}{2}} \sum_{i,j=1}^d  \biggl(\int_{\R^d} r_*^{2d} \rho_T^{-\alpha} \biggr)^{\frac{1}{2}} \\& \hspace{4cm}\biggl(\int_{\R^d} \rho_T^{\alpha} \biggl(\int_{B_\eps (\eps x)} |F_{ij} \mathbbm{1}_D|^2 + |H_{ij} \mathbbm{1}_D|^2 \biggr)^2 \biggr)^{\frac{1}{2}} \\
& \lesssim_{\alpha} \eps^{d/2} \ell^d r_*(0)^{\frac{\alpha}{2}} \sum_{i,j=1}^d  \biggl(\int_{\R^d} r_*^{2d} \rho_T^{-\alpha} \biggr)^{\frac{1}{2}} \\& \hspace{4cm}  \biggl(\int_{\R^d} \rho_{\eps T}^{\alpha} \biggl(\int_{B_\eps (x)} |F_{ij} \mathbbm{1}_D|^2 + |H_{ij} \mathbbm{1}_D|^2 \biggr)^2 \biggr)^{\frac{1}{2}} \\
& \lesssim_{\alpha} \eps^{d} \ell^d r_*(0)^{\frac{\alpha}{2}} \sum_{i,j=1}^d  \biggl(\int_{\R^d} r_*^{2d} \rho_T^{-\alpha} \biggr)^{\frac{1}{2}}  \sup_{y \in D} \left(\frac{\eps + y}{\eps T} + 1 \right)^{\frac{\alpha}{2}}  \\& \hspace{4cm}\left(\int_{D} |F_{ij} \mathbbm{1}_D|^2 + |H_{ij} \mathbbm{1}_D|^2\right) \\
& \lesssim_{\alpha} \eps^{d + 1} \mu_d(\frac{1}{\eps}) \widetilde{\chi_\eps^7}^2
 \ell^d r_*(0)^{\frac{\alpha}{2}} \biggl(\int_{\R^d} r_*^{2d} \rho_T^{-\alpha} \biggr)^{\frac{1}{2}}  \sup_{y \in D} \left(\frac{\eps + y}{\eps T} + 1 \right)^{\frac{\alpha}{2}}.
\end{split}
\end{equation}
Choosing $T = \frac{1}{\eps}$ yields the desired result. Indeed, the random variables at stake $r_*$ and $ \widetilde{\chi_\eps^7}$ verify the desired stochastic integrability.

Let 
$$\norme{IV}_\ell^2 := \ell^{-d} \int_{\R^d} \sup_{S' \in \mathcal{A}_\ell(x)} \biggl|\int_D k^2 \delta n_\eps (u_\eps' - u_0) g - \eps k^2 \beta^\eps \delta u_\eps \cdot \nabla g \biggr|^2. $$
%$\norme{IV}_{\ell}^2$ can be estimated directly with Lemma~\ref{lem:CSestimates} and the estimate \eqref{eq:estimate_deltaU}. Also note that 
Proposition~\ref{prop:2scErrDecayL2} combined with the estimate \eqref{eq:Cprime} yields
$$\sup_{S' \in \mathcal{A}_\ell(x)} \norme{u_\eps' - u_0}_{L^2(D)} \lesssim \eps \mu_d(\frac{1}{\eps}) \widehat{\chi_\eps} \norme{u_0}_{W^{2, \infty}(D)}.$$
Therefore, using \eqref{eq:estimate_deltaU} and Lemma~\ref{lem:CSestimates} we get
\begin{equation}
\begin{split}
\norme{IV}_\ell^2 & \lesssim \ell^{-d} \int_{\R^d} \sup_{S' \in \mathcal{A}_\ell(x)} \norme{u_\eps' - u_0}_{L^2(D)}^2 \left( \int_{D \cap B_{\eps \ell}(x))} |g|^2\right) \\
& \hspace{1cm} + \eps^2 \ell^{-d} \int_{\R^d} \left(\int_{D \cap B_{\eps \ell}(x))} |u_\eps|^2 + |\nabla u_\eps|^2 \right) \norme{\beta^\eps \cdot \nabla g}_{L^2(D)}^2 \\
& \lesssim \eps^{d+2} \mu_d(\frac{1}{\eps})^2 \left(\widehat{\chi_\eps}^2 \norme{u_0}_{W^{2, \infty}(D)}^2 \norme{g}_{L^2(D)}^2 + \chi_{\eps}^2 \norme{g}_{W^{1, \infty}(D)}^2 \norme{u_\eps}_{H^1(D)}^2 \right) \\
& \lesssim \eps^{d+2} \mu_d(\frac{1}{\eps})^2 \widetilde{\chi_\eps^9}^2 \norme{u_0}_{W^{2, \infty}(D)}^2 \norme{g}_{W^{2, \infty}(D)}^2,
\end{split}
\end{equation}
with $\widetilde{\chi_\eps^9}^2 := \widehat{\chi_{\eps}}^2 + \chi_{\eps}^2$.
\end{proof}

\section{Numerical illustrations} \label{section:numericalSimul}
In this section, we illustrate numerically the different asymptotic expansion of $u_\eps$, \textit{i.e.} the results of Proposition~\ref{prop:2scErrDecayL2} and Corollary~\ref{cor:R1decay}. Especially, we recover the predicted convergence rates.

\subsection{Geometry and choice of parameters} 
We choose $D$ as the two-dimensional square $(-L_D/2,L_D/2)^2$, with $L_D = 5$. All the inclusions are disks of equal radius. The centers of the inclusions of size $1$ are sampled according to a Mat\`ern point process \cite[Section~6.5.2]{Matern} in a domain $\mathcal{Q}_L := (-\frac{L}{2}, \frac{L}{2})^d$ with $L \gg 1$. To compute the correctors and the associated homogenized coefficients, we will use periodization \cite{clozeau2023bias} and thus the Mat\`ern process is periodized in $\mathcal{Q}_L$. The different parameters chosen for the simulation are summarized in Table~\ref{tab:parameters}.   
\begin{table}[ht]
\centering
\begin{tabular}{|c | l|}
\hline
Parameter  &  Value \\
\hline
Angle of the incident wave $u^{inc}$ & $0$ (From left to right) \\
	$h$		  & $0.07$ 		 \\
	$k$		  & $5$ 		 \\
$(a_M, a_S)$  & $(2.0, 3.5)$ \\ 
$(n_M, n_S)$  & $(1.5, 0.5)$ \\ 
$\eps$ & Between $0.18$ and $0.09$ \\
Volumic fraction of inclusions & $\approx 22.6 \% $ \\
\hline
\end{tabular}
\caption{Parameters of the simulation}
\label{tab:parameters}
\end{table}
Note in particular that $L_D$ is of the order of a few wavelengths. \\
The solutions are computed with $XLiFE++$ \cite{xlifepp}, an open source FEM, BEM, and FEM-BEM solver. In order to avoid significant discretization errors and distinguish them from the homogenization error, the mesh step $h$ is taken sufficiently small, \textit{i.e.} much smaller than $\eps$. We choose a $P1$ mesh. All the equations defined in $B_R$ are implemented with a  classical FEM-BEM coupling to avoid numerically computing the corresponding Dirichlet-to-Neumann operators on $\partial B_R$. We choose $P2$ elements both for the FEM and the BEM unknown. 
% After some numerical tests, it appears that the number of degrees of freedom for $P2-P2$ elements for larger $h$ and for $P1-P1$ elements for smaller $h$ gives similar outcome, but there are more degrees of freedom for the latter, thus the simulations are more expensive. 
For a single realization, with the set of parameters of Table~\ref{tab:parameters}, the problems $u_\eps$, $u_0$ and $\mathcal{U}_1$ could be simulated in a few minutes on a personal laptop. All the following simulations were obtained using a server with bi-processors, AMD EPYC Processor 7452 2.35 GHz with 128 threads, 2 chips, 32 cores/chip, 2 threads/core with RAM of 256 Go. 
% Note that the resolution of each problem is parallelized thanks to $XLiFE++$, but the Monte-Carlo procedure is not. \\

\subsection{Computation of the reference solution}
We describe here the procedure to simulate $u_\eps$. The computation of $u_0$ and $\mathcal{U}_1$ will be done similarly. We solve simultaneously $u_\eps^{+} \in H^1(D)$, the solution of the equation for $u_\eps$ inside $D$ and the flux $p_\eps^{+} \in H^{-\frac{1}{2}}(\partial D)$,  
$$p_\eps^{+} := a_\eps \nabla u_\eps^+ \cdot \nu.$$
Since the outside domain is homogeneous, by knowing only $u_\eps^{+}$ and $p_\eps^{+}$, we can compute $u_\eps(y)$ for $y \in B_R \setminus \overline{D}$ using the Green function $\mathcal{G}$ of the free space:
\begin{equation}
\mathcal{G}(x,y) := \left\{\begin{aligned}
& \frac{i}{4}H_0^{(1)}(k|x-y|) && \textrm{if }\, d =2,\\
& \frac{\exp(ik|x-y|)}{4\pi|x-y|} && \textrm{if }\, d = 3,\\
\end{aligned} \right.
\end{equation}
where $H_0^{(1)}$ is the first Hankel function of the first kind \cite{ammari2013mathematical}. $u_\eps$ satisfies for $y \in B_R \setminus \overline{D}$ 
$$u_\eps(y) = u^{inc}(y) + \int_{\partial D} \nabla \mathcal{G}(\cdot, y) \cdot \nu u_\eps^- - \nabla u_\eps^- \cdot \nu \mathcal{G}(\cdot, y),$$
\textit{i.e.}
\begin{equation} \label{eq:uepsext}
u_\eps(y) = u^{inc}(y) + \int_{\partial D} \nabla \mathcal{G}(\cdot, y) \cdot \nu u_\eps^{+} - p_\eps^{+} \mathcal{G}(\cdot, y).
\end{equation}
Then $u_\eps^{+}$ is the solution in $H^1(D)$ of:
\begin{equation} \label{eq:uepsi}
\left\{\begin{aligned}
& -\nabla\cdot\left(a_\eps \nabla u_\eps^{+} \right) - k^2 n_\eps u_\eps^{+} = 0 && \textrm{in}\, D,\\
& a_\eps \nabla u_\eps^{+} \cdot \nu = p_\eps^{+} && \textrm{on}\, \partial D.\\
\end{aligned} \right.
\end{equation}
The equation for the flux $p_\eps^+$ on $\partial D$ is obtained by taking the normal trace of \eqref{eq:uepsext}. Using the classical jump formula for the single layer potential \cite[(2.64)]{ammari2013mathematical}, we have
\begin{equation} \label{eq:pepsi}
\frac{u_\eps^{+}(y)}{2} = u^{inc}(y) + \int_{\partial D} \nabla \mathcal{G}(\cdot, y) \cdot \nu u_\eps^{+} - p_\eps^{+} \mathcal{G}(\cdot, y).
\end{equation}
By coupling \eqref{eq:uepsi} and \eqref{eq:pepsi}, this yields the following variational formulation: 
$\textrm{find } (u_\eps^+, p_\eps^+) \in H^1(D) \times H^{-\frac{1}{2}}(D) \textrm{ such that for all }  (v, q) \in H^1(D) \times H^{\frac{1}{2}}(D)$,
\begin{equation}
\begin{split}
\int_D & a_\eps \nabla u_\eps^+ \cdot \nabla v - k^2 n^{hom} u_\eps^+ v - \int_{\partial D} p_\eps^+ v + \frac{u_\eps^{+}(y)}{2}q  \\
& + \iint_{\partial D \times \partial D} \biggl(- u_\eps^{+}(x) \nabla \mathcal{G}(x, y) \cdot \nu(x) q(y) + p_\eps^{+}(x) \mathcal{G}(x, y) q(y) \biggr) d\sigma(x) d\sigma(y) \\
& \hspace{2cm}= \int_{\partial D} u^{inc} q.
\end{split}
\end{equation}
The simulation of $u_0$ is done similarly by replacing the coefficient fields $a_\eps$ and $n_\eps$ with $a^{hom}$ and $n^{hom}$. \\

\subsection{Computation of the correctors and effective parameters}
% In order to compute $u_0$ and $\mathcal{U}_1$, we have to to compute the corrector $\phi$. But $\phi$, solution of \eqref{eq:phi}, is defined in an unbounded domain, which makes it hard to compute. \\
% We choose to proceed by periodization and by adding a massive term. \\
As it is customary in stochastic homogenization, we choose to compute $\phi$ with periodic boundary condition and a regularization term. We compute the periodized correctors $\phi_i^{T, L}$ solutions in $$H^1_{per}(\mathcal{Q}_L) := \{\phi \in H^1_{loc}(\R^d)~|~\phi~\mathcal{Q}_L\textrm{-periodic}\}$$ 
of 
$$\frac{1}{T} \phi_i^{T,L} - \nabla \cdot a (\nabla \phi_i^{T,L} + e_i) = 0. $$
The massive term ensures $\int_{\mathcal{Q}_L} \phi_i^{T,L} = 0$. If one computes $\widetilde{a^{hom}}$ as
$$[\widetilde{a^{hom}}]_{i,j} :=  \E\left[\fint_{(-\frac{L}{2}, \frac{L}{2})^d} a (e_i + \nabla \phi_i^{T,L}) \cdot (e_j + \nabla \phi_j^{T,L})\right], $$
then one has that $\lim_{T,L \longrightarrow \infty} \left[\widetilde{a^{hom}} \right]_{i,j} = [a^{hom}]_{i,j} $. \\
Furthermore, from \cite[Proposition~2]{gloria2017quantitative}, we know that the corrector $\phi^T$ posed in the entire space $\R^d$ without periodic condition satisfies for $T \gg 1$,
\begin{equation}
\E[\vert \nabla \phi_T - \nabla \phi \vert^2] \lesssim \left\{\begin{aligned}
& T^{-1} && \textrm{if } d = 2, \\
& T^{-\frac{3}{2}} && \textrm{if } d = 3. \\
\end{aligned} \right.
\end{equation}   
Therefore, for $T$ and $L$ sufficiently large, $\phi_i^{T,L}$ is a good approximation of $\phi_i$ \cite{bourgeat2004approximations}. \\
To compute the numerical approximations of $a^{hom}$ and $n^{hom}$ that we call $a^{hom}_{num}$ and $n^{hom}_{num}$, we use a Monte-Carlo algorithm. For a fixed number $N$ of distinct periodic realizations we compute
$$[a^{hom}_{num}]_{ij} := \frac{1}{N}\sum_{m = 1}^N \int_{(-\frac{L}{2}, \frac{L}{2})^d} a^m (e_i + \nabla \phi_i^{T,L,m}) \cdot (e_j + \nabla \phi_j^{T,L,m}),$$
and
$$n^{hom}_{num} := \frac{1}{N}\sum_{m=1}^N \int_{(-\frac{L}{2}, \frac{L}{2})^d} n^m, $$
where $a^k$, $n^k$ and $\phi_i^{T,L,k}$ are respectively the coefficients for the $k$-th realization and the solution of the periodized corrector equation for the $k$-th realization.\\

We choose $T = 10^{7}$, $L= 50$ and $N = 20$. 
For the set of parameters described in Table~\ref{tab:parameters}, we find the following homogenized coefficients:
$$a^{hom}_{num} := \begin{bmatrix}
2.27054991565 & 0.000164757342405 \\
0.000164757342405 & 2.27054991565\\
\end{bmatrix},$$
and 
$$n^{hom}_{num} := 1.2735108046.$$

% The computation of these homogenized coefficients is done in approximately $20$ mins. 

To simulate $\mathcal{U}_1$, we remark that $\mathcal{U}_1$ is the solution in $H^1(B_R)$ of
\begin{equation} 
\left\{\begin{aligned}
&-\Delta \mathcal{U}_1 - k^2 \mathcal{U}_1 = 0 && \textrm{in}\, B_R \setminus \overline{D}, \\
& -\nabla\cdot\left(a^{hom} \nabla \mathcal{U}_1 \right) - k^2 n^{hom} \mathcal{U}_1 = -\nabla \cdot \mathcal{H}_\eps - k^2 (n^{hom} - n_\eps)u_0 && \textrm{in}\, D,\\
&\mathcal{U}_1^- - \mathcal{U}_1^+ = 0 && \textrm{on}\, \partial D,\\
& \nabla \mathcal{U}_1^- \cdot \nu - a^{hom} \nabla \mathcal{U}_1^+ \cdot \nu = - \mathcal{H}_\eps \cdot \nu  && \textrm{on}\, \partial D,\\
& \nabla \mathcal{U}_1 \cdot \nu = \Lambda(\mathcal{U}_1) && \textrm{on}\, \partial B_R,\\
\end{aligned} \right.
\end{equation}
with $\mathcal{H}_\eps$ defined as
$$\mathcal{H}_\eps := (a^{hom} - a_\eps)(e_i + \nabla \phi_i^\eps) \partial_i u_0,$$
so that $\mathcal{U}_1$ can be simulated just as $u_0$ with the correct source term. 

\subsection{Numerical results}
We show here the results of the computations of $u_\eps$, $u_0$ and $\mathcal{U}_1$ on Figure~\ref{fig:Simu1} and Figure~\ref{fig:Simu2}. 
We plot the mesh and the solutions associated with one realization. We also plot and compare the error terms and the correction $\mathcal{U}_1$ to illustrate both Proposition~\ref{prop:2scErrDecay} and Theorem~\ref{thm:Rdecay}.

\begin{figure*}[!ht]
        \centering
        \begin{subfigure}[b]{0.49\textwidth}  
            \centering 
            \includegraphics[scale=0.1]{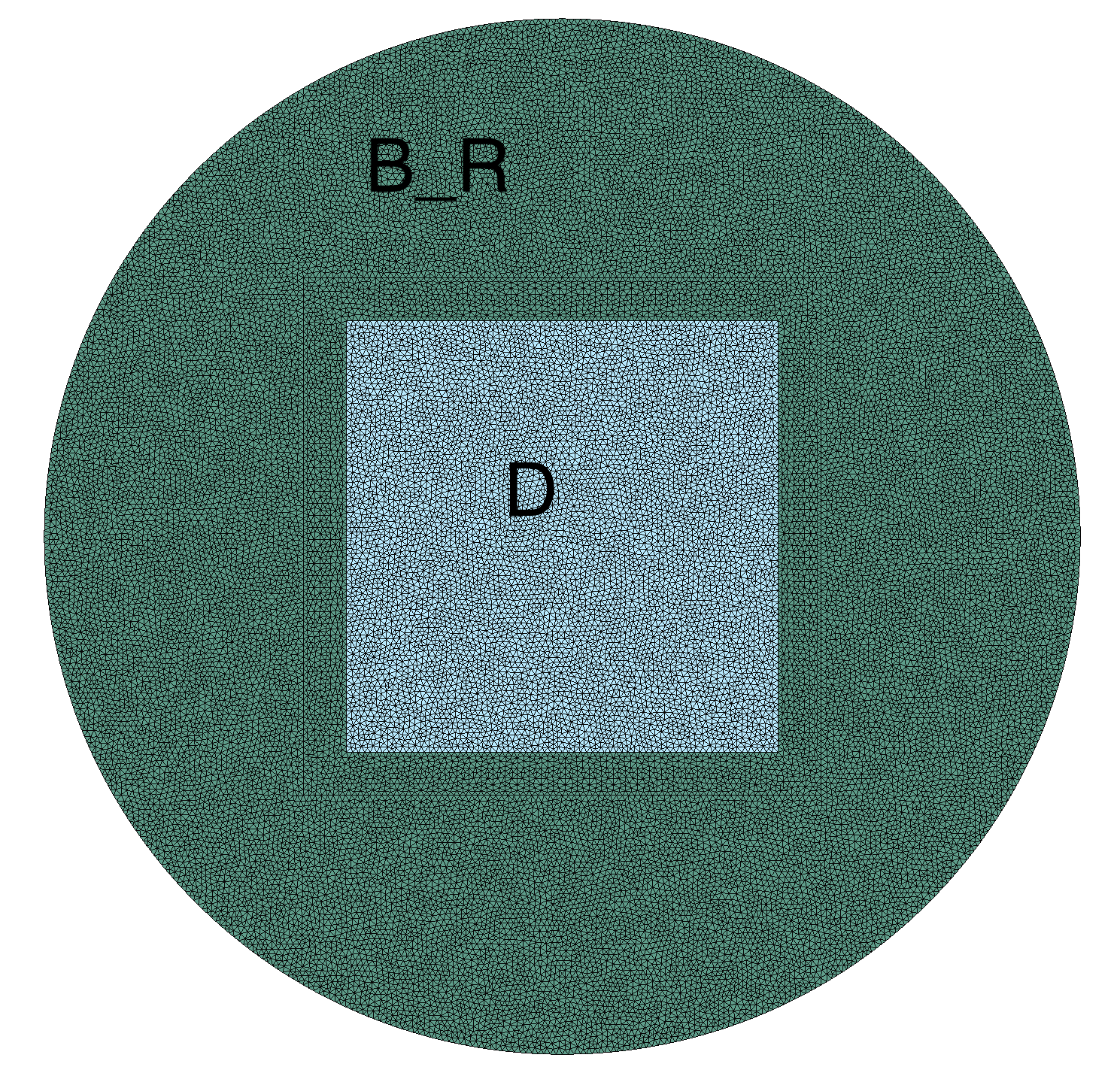}
            \caption{Mesh with domains}
  			\label{fig:mesh}
        \end{subfigure}
        \hfill
        \begin{subfigure}[b]{0.49\textwidth}
            \centering
            \includegraphics[width=\textwidth]{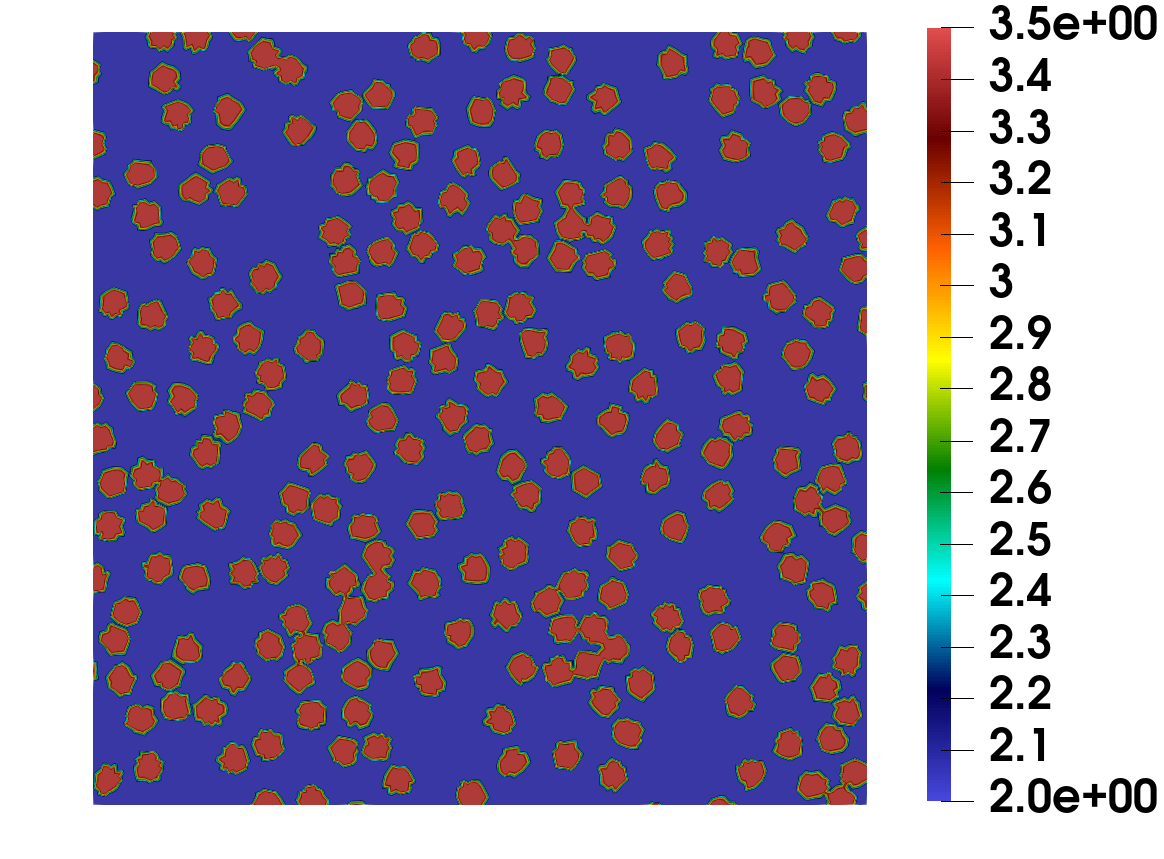}
            \caption{Realization of $a_\eps$ with $\eps = 0.09$ in $D$}
  			\label{fig:aeps}
        \end{subfigure}
        \vskip\baselineskip
        \begin{subfigure}[b]{0.49\textwidth}   
            \centering 
            \includegraphics[width=\textwidth]{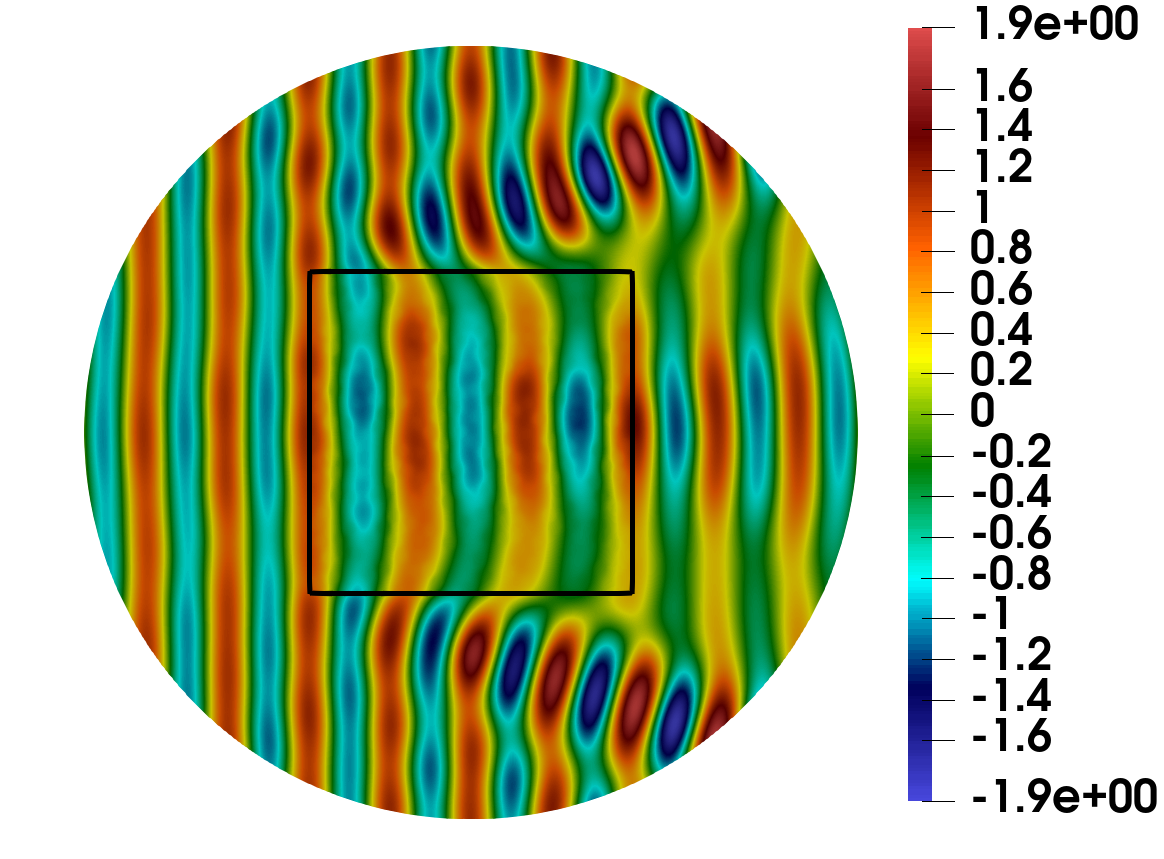}
            \caption{Solution $u_\eps$ with $\eps = 0.09$ }    
            \label{fig:ueps}
        \end{subfigure}
        \hfill
        \begin{subfigure}[b]{0.49\textwidth}   
            \centering 
            \includegraphics[width=\textwidth]{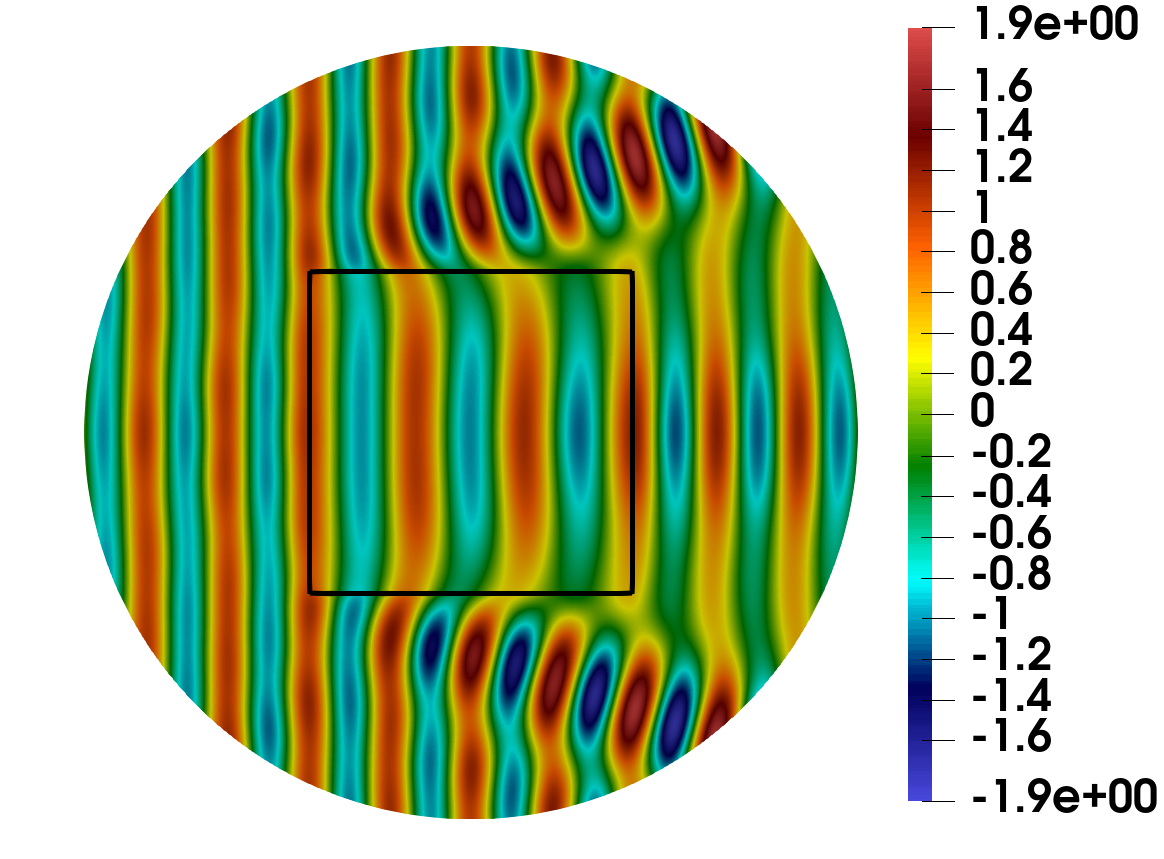}
            \caption{Solution $u_0$}
  			\label{fig:u0}
        \end{subfigure}
	\caption{(a) Mesh, (b) $a_\eps$ for a realization of $S$ with $\eps = 0.09$, (c) corresponding $u_\eps$ and (d) homogenized solution $u_0$ for an incident plane wave along $(1, 0)$}
	\label{fig:Simu1}
    \end{figure*}
    
    \begin{figure*}[!ht]
        \centering
        \begin{subfigure}[b]{0.49\textwidth}  
            \centering 
            \includegraphics[width=\textwidth]{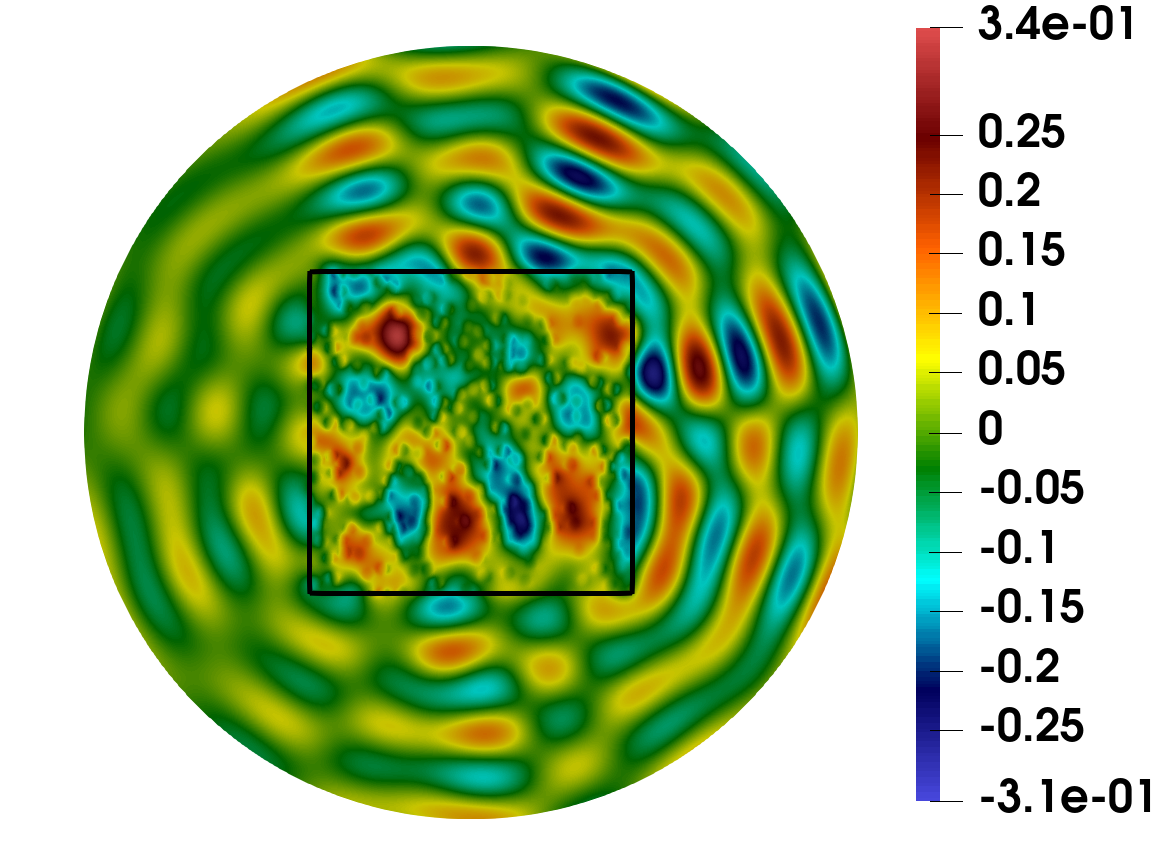}
            \caption{$u_\eps - u_0$ for the realization of $S$ in Figure~\ref{fig:aeps} ($\eps = 0.09$)}
  			\label{fig:err}
        \end{subfigure}
        \hfill
        \begin{subfigure}[b]{0.49\textwidth}
            \centering
            \includegraphics[width=\textwidth]{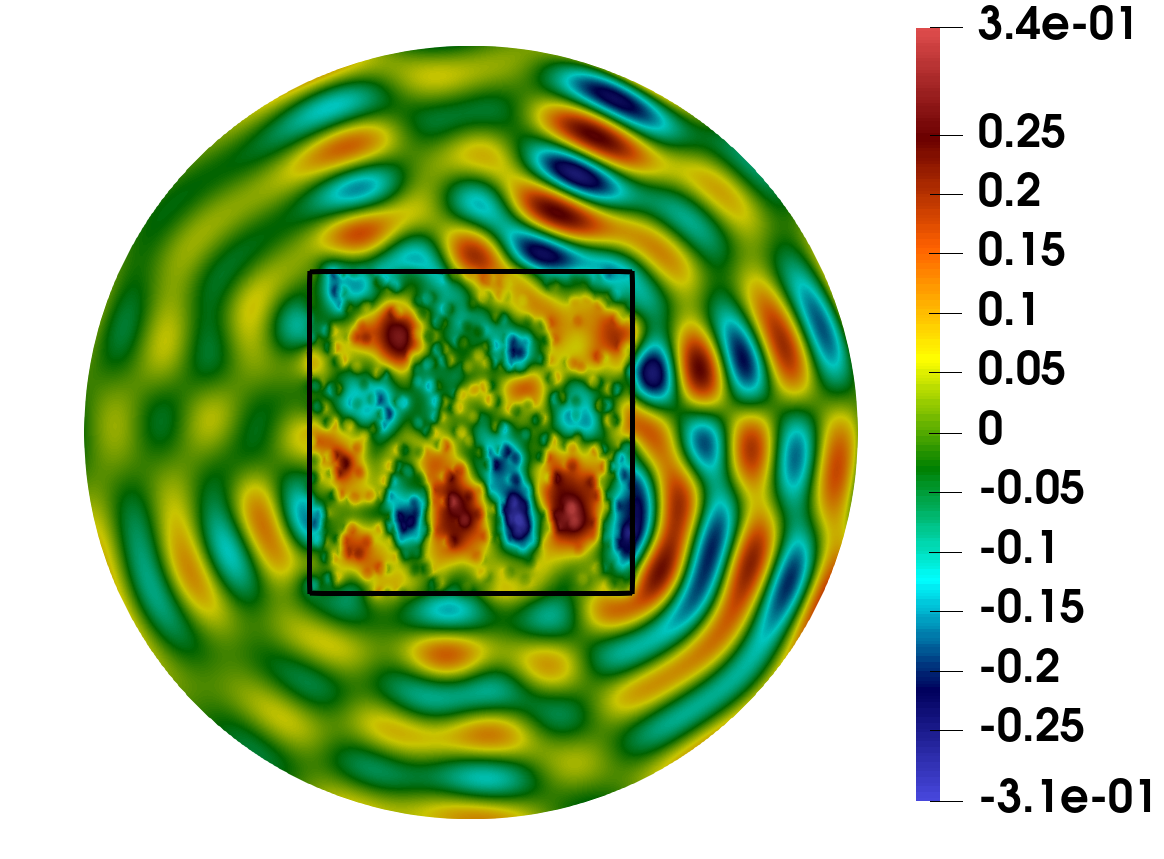}
            \caption{$\mathcal{U}_1$ for the realization of $S$ in Figure~\ref{fig:aeps} ($\eps = 0.09$)}
  			\label{fig:u1}
        \end{subfigure}
        \vskip\baselineskip
        \begin{subfigure}[b]{0.49\textwidth}   
            \centering 
            \includegraphics[width=\textwidth]{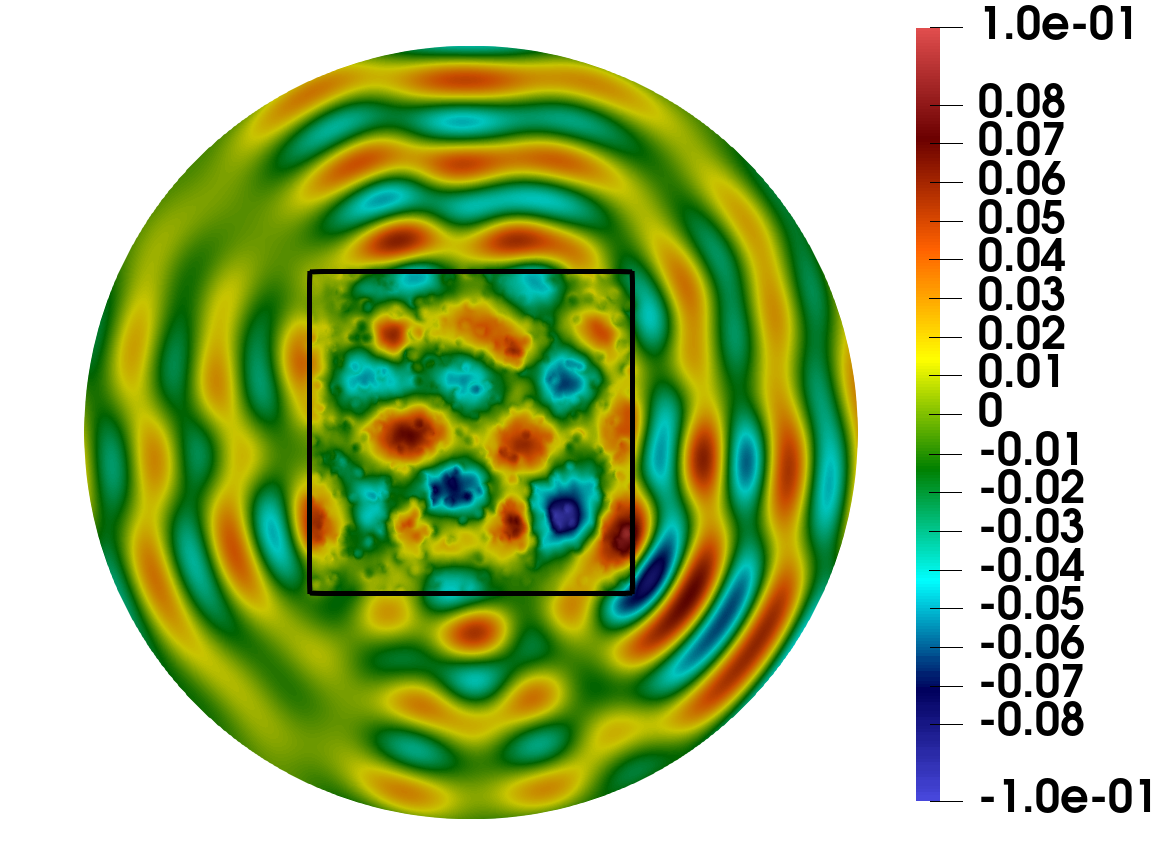}
           \caption{$u_\eps - u_0 - \mathcal{U}_1$ for the realization of $S$ in Figure~\ref{fig:aeps} ($\eps = 0.09$)}    
            \label{fig:err1}
        \end{subfigure}
	\caption{For the realization $S$ shown in Figure~\ref{fig:aeps}, (a) error term $u_\eps - u_0$, (b) correction term $\mathcal{U}_1$, (c) error term term $u_\eps - u_0 - \mathcal{U}_1$}
	\label{fig:Simu2}
    \end{figure*}

% After some computations for several realizations, we can obtain the empirical error decay of Figure~\ref{fig:ErrorSlopes}.
% The computation was done is $11434$ s $\approx$ 3h10mins. 
The Monte-Carlo process to compute the average error is done with $30$ realizations on Figure~\ref{fig:ErrorSlopes}. \\
%We denote by $\widehat{\mathcal{U}_1}$ the function:
%\begin{equation}
%\begin{split}
%\widehat{\mathcal{U}_1} & := \int_{D} (a^{hom} - a_\eps(x)) (e_i + \nabla \phi_i^\eps(x)) \partial_i u_0(x) \cdot \nabla G_0(x,\cdot)\mathrm{d}x \\ 
%& \hspace{3cm} - k^2 \int_D (n^{hom} - n_\eps(x)) u_0(x) G_0(x, \cdot)\mathrm{d}x \\
%& := \mathcal{U}_1 - \E[u_\eps - u_0].
%\end{split}
%\end{equation} 
One can see on Figure~\ref{fig:ErrorSlopes}, that the expected error decay of order $\eps^{\frac{d+1}{2}}\mu_d(\frac{1}{\eps})^{\frac{1}{2}} = \eps^{\frac{3}{2}}\vert \log(\eps)\vert^{\frac{1}{2}}$ in Theorem~\ref{thm:Rdecay} is obtained. For values of $\eps$ of order $0.1$, the asymptotic expansion $u_0 + \eps \mathcal{U}_1$ is already a very good approximation of the field $u_\eps$. \\
\begin{figure*}[!ht]
        \centering 
\includegraphics[width=\textwidth]{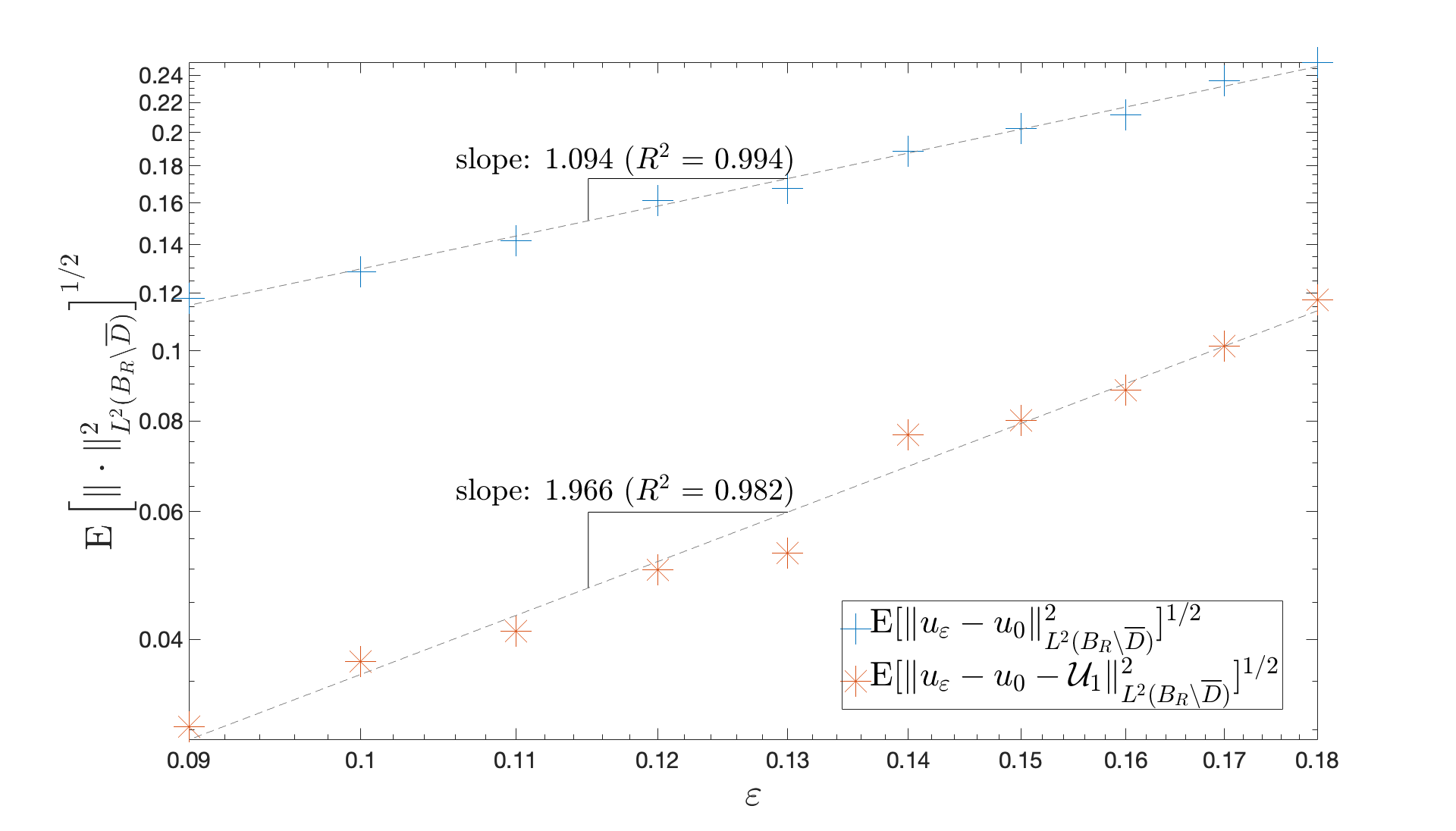}
           \caption{Error decay and linear regression}  
\label{fig:ErrorSlopes}
\end{figure*}

\bibliography{biblio}% common bib file
%% if required, the content of .bbl file can be included here once bbl is generated
%%\input sn-article.bbl

\begin{appendices}

%\section{Notations}
%
%\begin{tabular}{|c|c|}
%\hline 
%$d$ & Dimension ($d = 1,2$ or $3$). \\
%\hline 
%$(\Omega, \F, \Pro)$ & Probability space. \\ 
%\hline 
%$S$ & The set of all scatterers.  \\ 
%\hline 
%$\lesssim$ & Inferior to a constant independent both on $\eps$ and the randomness.  \\ 
%\hline 
%$(\beta_i, \sigma_i)$ & Extended corrector \\
%\hline 
%$\phi^\eps$ & Rescaled corrector $\phi^\eps := \eps \phi(\frac{\cdot}{\eps})$ \\
%\hline 
%$\xi$ & Standard mollifier ($\xi_\eps$ is the rescaled mollifier)  \\
%\hline 
%$B_R$ & The ball of radius $R$, $R$ chosen such that $D \subset B_R$ \\
%\hline 
%$\eta_\eps$ & The cutoff located on the edge of $D$  \\
%\hline 
%\end{tabular}
%

\section{Well-posedness of the scattering problem and $H^s$-regularity} \label{app:regularity}

We show in this appendix that the scattering problems are well-posed in $H^1(B_R\setminus \overline{D}) \times H^1(D)$ with a control of $u_\eps$ that is independent on $\eps$ and the randomness. To do so, we suppose that the bilinear form associated to \eqref{eq:mainBR} is coercive. Finally, we prove that $u_0$ can be more regular than $H^1$ under regularity assumptions on the boundary of $D$ and the source terms. The coercivity of the bilinear form is not a restrictive hypothesis. It can be shown for example under either one of the following sufficient conditions \cite{chaumont2023scattering}
\begin{enumerate}
\item $\Im k >0$
\item $kR$ is small enough (low frequency).
\end{enumerate}

\begin{prop}[Uniform stability under coercivity assumption] \label{prop:stabAbsorb_smallKR}
Let $D \subset \R^d$ be a non-empty, open, and bounded set having $\mathcal{C}^2$- boundary $\partial D$ such that the exterior domain $\R^d \setminus \overline{D}$ is connected. Let 
$A : \overline{D} \mapsto \mathbb{C}^{d \times d}$ and $n : \overline{D} \mapsto \mathbb{C}$. We suppose that $A(x)$, $x \in  \overline{D}$, is a definite positive matrix that satisfies, $\xi \cdot A \xi \geq \Lambda_A^- \vert\xi \vert^2$ and $|A \xi| \leq \Lambda_A^+|\xi|$ for all $\xi  \in \mathbb{C}^3$ and $x \in D$, and that $\Lambda_n^+ \geq n \geq \Lambda_n^- > 0$ where $\Lambda_A^+$, $\Lambda_A^-$, $\Lambda_n^+$, $\Lambda_n^-$ are positive constants.  \\
Let $f \in L^2(D)$, $g \in H^{\frac{1}{2}}(\partial D)$ and $h \in H^{-\frac{1}{2}}(\partial D)$. 
Then, there exists a unique $u \in H^1(B_R \setminus \overline{D}) \times H^1(D)$ solution of the transmission problem
\begin{equation} \label{eq:TP}
\left\{\begin{aligned}
&-\Delta u^- - k^2 n_0 u^- = 0 && \textrm{in}\, B_R \setminus \overline{D}, \\
& -\nabla\cdot\left(A \nabla u^+ \right) - k^2 n u^+ = f && \textrm{in}\, D,\\
&u^- - u^+ = g  && \textrm{on}\, \partial D,\\
& \nabla u^- \cdot \nu - A \nabla u^+ \cdot \nu = h && \textrm{on}\, \partial D,\\
& \nabla u \cdot \nu = \Lambda(u) && \textrm{on}\, \partial B_R,\\
\end{aligned} \right.
\end{equation}
which satisfies the uniform control:
\begin{equation}
\norme{u}_{H^1(B_R \setminus \overline{D})} + \norme{u}_{H^1(D)} \lesssim \norme{f}_{L^2(D)} + \norme{g}_{H^{\frac{1}{2}}(\partial D)} + \norme{h}_{H^{-\frac{1}{2}}(\partial D)}.
\end{equation}
\end{prop}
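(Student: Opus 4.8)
The plan is to reduce the transmission problem \eqref{eq:TP} to one with no Dirichlet jump across $\partial D$, recast the reduced problem as a coercive variational equation posed on the single space $H^1(B_R)$, and conclude by the (sesquilinear) Lax--Milgram theorem. The uniform control will then drop out of the continuity and coercivity constants, which depend only on $\Lambda_A^\pm$, $\Lambda_n^\pm$, $k$, $R$ and $d$, and in particular not on the specific realization or on $\eps$.

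First I would lift the jump datum $g$. Since $D$ has a $\mathcal{C}^2$ (in particular Lipschitz) boundary, the trace operator admits a bounded right inverse, so I can choose $G \in H^1(B_R \setminus \overline{D})$ supported in a fixed neighborhood of $\partial D$ that stays away from $\partial B_R$, with $G^-|_{\partial D} = g$ and $\norme{G}_{H^1(B_R \setminus \overline{D})} \lesssim \norme{g}_{H^{\frac{1}{2}}(\partial D)}$. Extending $G$ by $0$ inside $D$ and setting $w := u - G$, the function $w$ has vanishing jump across $\partial D$, hence $w \in H^1(B_R)$. Because $G$ vanishes near $\partial B_R$, the DtN condition $\nabla w \cdot \nu = \Lambda(w)$ on $\partial B_R$ is unchanged, and because $G \equiv 0$ in $D$ the interior equation keeps the source $f$; only the flux-jump datum is modified, becoming $\tilde h := h - \nabla G^- \cdot \nu \in H^{-\frac{1}{2}}(\partial D)$. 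All the new data are controlled by $\norme{f}_{L^2(D)} + \norme{g}_{H^{\frac{1}{2}}(\partial D)} + \norme{h}_{H^{-\frac{1}{2}}(\partial D)}$.

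Next I would write the weak formulation: find $w \in H^1(B_R)$ such that $\mathfrak{a}(w,v) = \ell(v)$ for all $v \in H^1(B_R)$, where
\begin{equation*}
\mathfrak{a}(w,v) := \int_{B_R \setminus \overline{D}} \nabla w \cdot \nabla \overline{v} - k^2 n_0 w \overline{v} + \int_D A \nabla w \cdot \nabla \overline{v} - k^2 n w \overline{v} - \langle \Lambda(w), v \rangle_{\partial B_R}
\end{equation*}
is exactly the sesquilinear form attached to \eqref{eq:mainBR}, and $\ell(v) := \int_D f \overline{v} + \langle \tilde h, v \rangle_{\partial D} - \mathfrak{a}(G,v)$ collects the controlled contributions of the data (the last term is well defined since $G \in H^1$). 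Continuity of $\mathfrak{a}$ follows from $|A\xi| \leq \Lambda_A^+ |\xi|$, $n \leq \Lambda_n^+$, the trace theorem, and the continuity of $\Lambda \colon H^{\frac{1}{2}}(\partial B_R) \to H^{-\frac{1}{2}}(\partial B_R)$; continuity of $\ell$ gives $|\ell(v)| \lesssim (\norme{f}_{L^2(D)} + \norme{g}_{H^{\frac{1}{2}}(\partial D)} + \norme{h}_{H^{-\frac{1}{2}}(\partial D)}) \norme{v}_{H^1(B_R)}$. Coercivity of $\mathfrak{a}$ on $H^1(B_R)$ is the standing assumption; here I would record that the self-adjointness and non-positivity of $\Lambda$ make $-\mathrm{Re}\,\langle \Lambda(v), v \rangle_{\partial B_R} \geq 0$, which together with the ellipticity $\xi \cdot A \xi \geq \Lambda_A^- |\xi|^2$ is what underlies the assumed lower bound $\mathrm{Re}\,\mathfrak{a}(v,v) \gtrsim \norme{v}_{H^1(B_R)}^2$ with a constant depending only on the ellipticity data, $k$ and $R$. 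Lax--Milgram then yields a unique $w$ with $\norme{w}_{H^1(B_R)} \lesssim \norme{\ell}$, and $u := w + G$ solves \eqref{eq:TP} and satisfies the claimed estimate after adding back $\norme{G}_{H^1} \lesssim \norme{g}_{H^{\frac{1}{2}}(\partial D)}$.

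The genuinely delicate point is the coercivity itself, which is \emph{not} automatic for a Helmholtz-type form because the term $-k^2 \int n |v|^2$ is sign-indefinite; this is exactly where $\Im k > 0$ or $kR$ small is needed, and it is taken as a hypothesis here. Everything else is bookkeeping: tracking the signs in the two integrations by parts over $D$ and over $B_R \setminus \overline{D}$ so that the flux-jump condition and the DtN condition enter $\mathfrak{a}$ and $\ell$ with the correct orientation of $\nu$, and ensuring the lift $G$ is supported away from $\partial B_R$ so that the exterior boundary condition is left intact.
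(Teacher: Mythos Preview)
Your proof is correct and follows the standard route: lift the Dirichlet jump via a bounded right inverse of the trace, reduce to a single variational equation on $H^1(B_R)$, and invoke Lax--Milgram with the assumed coercivity. The paper in fact does not give a proof of this proposition at all; it is stated in Appendix~\ref{app:regularity} immediately after the remark that coercivity is taken as a standing assumption (holding e.g.\ when $\Im k>0$ or $kR$ is small), and the reader is expected to supply exactly the argument you wrote. One minor bookkeeping point: when you write $\mathfrak a(G,v)$, the lifted function $G$ lives in $H^1(B_R\setminus\overline{D})\times H^1(D)$ rather than $H^1(B_R)$ since it jumps across $\partial D$; you should make explicit that by $\mathfrak a(G,v)$ you mean the sum of the two volume integrals (which is well defined and bounded because $G\in H^1$ on each side and vanishes in $D$ and near $\partial B_R$), not the form on $H^1(B_R)\times H^1(B_R)$.
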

We also need a regularity result on the homogenized solution $u_0$ that we recall here.
\begin{prop}[$H^s$- regularity for the transmission problem] \label{prop:regHk}

Let $s \geq 2$. Let $D$ be a bounded domain of class $\mathcal{C}^s$. If $A, n \in \mathcal{C}^{s-2}(\overline{D})$, $f \in H^{s-2}(D)$ $g \in H^{s - {\frac{1}{2}}}(\partial D)$ and $h \in H^{s - {\frac{3}{2}}}(\partial D)$, then the unique solution $u \in H^1(D) \times H^1(B_R \setminus \overline{D}) $ of \eqref{eq:TP} belongs to $H^s(D) \times H^s(B_R \setminus \overline{D})$. Moreover the following estimate holds:
\begin{equation}
\norme{u}_{H^s(B_R \setminus \overline{D})} + \norme{u}_{H^s(D)} \lesssim \norme{f}_{H^{s-2}(D)} + \norme{g}_{H^{s - \frac{1}{2}}(\partial D)} + \norme{h}_{H^{s -\frac{3}{2}}(\partial D)}.
\end{equation}

\end{prop}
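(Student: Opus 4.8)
The plan is to bootstrap from the $H^1$-solution furnished by Proposition~\ref{prop:stabAbsorb_smallKR}, combining interior elliptic regularity with separate boundary analyses at the interface $\partial D$ and at the outer sphere $\partial B_R$, patched by a partition of unity. I would argue by induction on the integer part of $s$, disposing of non-integer orders at the end either by interpolation between consecutive integers or by invoking the Agmon--Douglis--Nirenberg a priori estimates directly in fractional Sobolev scales for the model problems below.

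\textbf{Reduction to homogeneous transmission data.} Since $\partial D$ is $\mathcal{C}^s$, the Dirichlet and conormal trace operators are surjective, so one may choose a lift $\Phi \in H^s(B_R \setminus \overline{D})$, supported in a neighbourhood of $\partial D$ disjoint from $\partial B_R$, with $\Phi|_{\partial D} = g$ and $\nabla \Phi \cdot \nu|_{\partial D} = h$; this uses precisely $g \in H^{s-\frac{1}{2}}(\partial D)$ and $h \in H^{s-\frac{3}{2}}(\partial D)$. Replacing $u^-$ by $u^- - \Phi$ produces a field that is continuous across $\partial D$ and has continuous conormal flux, at the cost of adding the source $(-\Delta - k^2 n_0)\Phi$ to the outer equation. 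It therefore suffices to treat the case $g = h = 0$, in which the glued field $\tilde u := u^+ \mathbbm{1}_D + u^- \mathbbm{1}_{B_R \setminus \overline{D}}$ solves in $\mathcal{D}'(B_R)$ a single divergence-form equation $-\nabla \cdot (\hat A \nabla \tilde u) - k^2 \hat n \tilde u = \hat f$, with $\hat A = A \mathbbm{1}_D + Id\,\mathbbm{1}_{B_R \setminus \overline{D}}$ and $\hat n$ piecewise $\mathcal{C}^{s-2}$, both merely jumping across the $\mathcal{C}^s$ interface $\partial D$, and with $\hat f$ lying in $H^{s-2}$ of each subdomain.

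\textbf{Interior, interface and outer-boundary regularity.} Away from $\partial D \cup \partial B_R$ the coefficients are $\mathcal{C}^{s-2}$ and the source is $H^{s-2}$, so standard interior elliptic regularity gives $\tilde u \in H^s_{loc}$ in the interior of each subdomain. Near a point of $\partial D$ I localise with a cutoff and flatten the interface to a hyperplane $\{y_d = 0\}$ by a $\mathcal{C}^s$ diffeomorphism (this is where $\partial D \in \mathcal{C}^s$ enters), reducing to a model transmission problem with coefficients smooth on each side and continuous Dirichlet and conormal traces. Tangential difference quotients then yield tangential derivatives up to the required order, and the normal derivative $\partial_{dd}\tilde u$ is recovered algebraically from the equation on each side using the uniform lower bound $\hat A_{dd} \geq \Lambda_A^- > 0$; iterating gains one derivative per step. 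Near $\partial B_R$ the geometry is smooth, the coefficients constant, and the Dirichlet-to-Neumann operator $\Lambda$ is a classical pseudodifferential operator of order one on $\partial B_R$ satisfying the Lopatinskii--Shapiro condition, so the same localisation and difference-quotient scheme applies \cite{nedelec2001acoustic}. A partition of unity subordinate to a finite cover by interior, interface, and outer-boundary patches assembles the local estimates into the global bound, and adding back $\Phi$ restores the dependence on $g$ and $h$ in the stated estimate.

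\textbf{Main obstacle.} The essential difficulty is the interface step: because $\hat A$ jumps across $\partial D$, the normal derivative of $\tilde u$ is genuinely discontinuous there, so only piecewise $H^s$-regularity can hold, and the induction must be carried simultaneously on both sides of $\partial D$, the two homogeneous transmission conditions supplying exactly the coupling needed to close each step. The non-integer case requires the additional care mentioned above, since the difference-quotient argument produces integer-order gains and the fractional scales must be reached either by interpolation or through the Agmon--Douglis--Nirenberg theory applied to the flattened transmission system.
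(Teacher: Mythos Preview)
Your approach is correct but takes a genuinely different route from the paper's. The paper's proof is a short bootstrapping argument that leans on existing results from \cite{costabel}: starting from the $H^1$ solution, it observes that $u|_D$ solves a Dirichlet problem with its own trace as data, invokes \cite[Theorem~3.4.1]{costabel} to upgrade to $H^2(D)$ (and similarly on $B_R\setminus\overline{D}$), and then applies \cite[Theorem~2.3.2~(ii)]{costabel} to reach $H^s$ together with the estimate. No lifting of $(g,h)$, no flattening, no difference quotients, no explicit treatment of the DtN boundary---those are all buried in the cited theorems.

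What you outline is essentially the content of those theorems carried out by hand: lift the jumps, localise, flatten the interface, run tangential difference quotients, recover the normal derivative from the equation on each side, and patch with a partition of unity. This is the standard proof of transmission regularity and is sound; it is self-contained and makes the role of the $\mathcal{C}^s$ regularity of $\partial D$ and the piecewise $\mathcal{C}^{s-2}$ regularity of the coefficients transparent. The trade-off is length and technical overhead---the paper buys brevity by citing \cite{costabel}, while your route buys independence from that reference at the cost of reproducing its machinery. For the purposes of this appendix, either is acceptable; the paper simply chose the shorter path.
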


\begin{proof}

We rely on elliptic regularity results proved in \cite{costabel} to establish our result.
We first prove that $u$ belongs to $H^2(D)$. 
Since $u \in H^1(D)$, its trace on $\partial D$ belongs to $H^{\frac{1}{2}}(\partial D)$. Let $\widetilde{u} \in H^1(D)$ be the unique solution of
\begin{equation} \label{eq:tildeu_reg}
\left\{\begin{aligned}
& -\nabla\cdot\left(A \nabla \widetilde{u} \right) - k^2 n \widetilde{u} = 0 && \textrm{in}\, D,\\
&\widetilde{u} = u && \textrm{on}\, \partial D.\\
\end{aligned} \right.
\end{equation}
Then $\widetilde{u}$ satisfies the hypotheses of \cite[Theorem~3.4.1]{costabel} and therefore $\widetilde{u}$ is in $H^2(D)$. \\
By uniqueness of the solution of \eqref{eq:tildeu_reg}, we also have:
$$\widetilde{u} = u\quad \textrm{in}\, D.$$
Therefore $u \in H^2(D)$. Using the same reasoning in $B_R \setminus \overline{D}$ with a Dirichlet-to-Neumann operator on the boundary of $B_R$, one concludes that $u \in H^2(B_R \setminus \overline{D})$.  \\
Similarly we can now apply \cite[Theorem~2.3.2~(ii)]{costabel} to show that $u$ belongs in fact to $H^s(D) \times H^s(B_R \setminus \overline{D})$ and get the estimate.
\end{proof}

\section{Qualitative homogenization} \label{app:qualitative_hom}
We detail here the proof of the convergence of $u_\eps$ towards $u_0$ strongly in $L^2(B_R)$ and weakly in $H^1(B_R)$ by the method of oscillating test functions.

\begin{prop}[Homogenization of the scattering problem in {$H^1(B_R)$}] \label{prop:CVueps}
Let $u_\eps$ be the a.s. unique solution in $H^1(B_R)$ of \eqref{eq:mainBR} and $u_0 \in H^1(B_R)$ be the solution of \eqref{eq:u0BR}. Then we have the following convergence results as $\eps$ goes to $0$ 
\begin{equation}
\left\{\begin{aligned}
& u_\eps \xrightarrow{L^2(B_R)} u_0, \\
& \nabla u_\eps \xrightharpoonup{L^2(B_R)} \nabla u_0, \\
& a_\eps \nabla u_\eps \xrightharpoonup{L^2(B_R)} a^{hom} \nabla u_0.
\end{aligned} \right.
\end{equation}
\end{prop}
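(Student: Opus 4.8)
The plan is to follow Tartar's method of oscillating test functions, using the assumed uniform bound as the only non-standard input. First I would record that, by Proposition~\ref{prop:stabAbsorb_smallKR} applied with the coefficients $a_\eps$, $n_\eps$ (whose ellipticity and boundedness constants are uniform in $\eps$ and $\omega$ by the hypotheses of Section~\ref{section:model}), the solutions satisfy $\norme{u_\eps}_{H^1(B_R)} \lesssim 1$ uniformly. Hence, along a subsequence, $u_\eps \rightharpoonup u_*$ weakly in $H^1(B_R)$ and, by Rellich, $u_\eps \to u_*$ strongly in $L^2(B_R)$; moreover the flux $a_\eps \nabla u_\eps$ is bounded in $L^2(B_R)$, so $a_\eps \nabla u_\eps \rightharpoonup \xi$ weakly in $L^2(B_R)$ for some $\xi$. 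Outside $D$ the coefficients do not oscillate ($a_\eps = Id$, $n_\eps = n_0$), whence $\xi = \nabla u_*$ there; the crux is to identify $\xi = a^{hom}\nabla u_*$ inside $D$.

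For the identification I would introduce, for each $i \in \intInter{1,d}$, the adjoint corrector $\phi_i^*$ and the oscillating test field $w_i^\eps(x) := x_i + \eps \phi_i^*(x/\eps)$ on $\R^d$. By stationarity of $\nabla\phi_i^*$ and the ergodic (Birkhoff) theorem, a.s. $\nabla w_i^\eps = e_i + (\nabla\phi_i^*)(\cdot/\eps) \rightharpoonup e_i + \E[\nabla\phi_i^*] = e_i$ and $a_\eps^*\nabla w_i^\eps = \bigl(a^*(e_i + \nabla\phi_i^*)\bigr)(\cdot/\eps) \rightharpoonup \E[a^*(e_i + \nabla\phi_i^*)] = (a^{hom})^* e_i$ weakly in $L^2(D)$; the sublinearity of the corrector (the bound \eqref{eq:corrector_bounds} of Proposition~\ref{prop:corrbounds} gives $\eps^2\norme{\phi_i^*(\cdot/\eps)}_{L^2(D)}^2 \lesssim \eps^2\mu_d(1/\eps)^2 \to 0$) ensures $w_i^\eps \to x_i$ strongly in $L^2(D)$. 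The two fields solve $-\nabla\cdot(a_\eps^*\nabla w_i^\eps) = 0$ and $-\nabla\cdot(a_\eps\nabla u_\eps) = k^2 n_\eps u_\eps$ in $D$, so $a_\eps^*\nabla w_i^\eps$ is divergence free while $a_\eps\nabla u_\eps$ has $L^2$-bounded divergence.

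The key algebraic identity is the pointwise symmetry $a_\eps\nabla u_\eps \cdot \nabla w_i^\eps = \nabla u_\eps \cdot a_\eps^*\nabla w_i^\eps$. I would apply the Murat--Tartar div--curl lemma to each side (each pair consists of a weakly convergent $L^2$ field with $L^2$-controlled divergence and a curl-free gradient, hence both constraints are precompact in $H^{-1}_{loc}$): on the left, $a_\eps\nabla u_\eps \rightharpoonup \xi$ against $\nabla w_i^\eps \rightharpoonup e_i$ gives $a_\eps\nabla u_\eps \cdot \nabla w_i^\eps \rightharpoonup \xi \cdot e_i$ in $\mathcal{D}'(D)$; on the right, $a_\eps^*\nabla w_i^\eps \rightharpoonup (a^{hom})^* e_i$ against $\nabla u_\eps \rightharpoonup \nabla u_*$ gives $\nabla u_\eps \cdot a_\eps^*\nabla w_i^\eps \rightharpoonup \nabla u_* \cdot (a^{hom})^* e_i$. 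Equating the two distributional limits for every $i$ yields $\xi = a^{hom}\nabla u_*$ in $D$.

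Finally I would pass to the limit in the variational formulation of \eqref{eq:mainBR}: the flux term converges using $a_\eps\nabla u_\eps \rightharpoonup a^{hom}\nabla u_*$ in $D$ and $\rightharpoonup \nabla u_*$ outside $D$; the zeroth-order term converges since the strong $L^2$-convergence $u_\eps \to u_*$ combined with the weak-$*$ convergence $n_\eps = n(\cdot/\eps) \rightharpoonup n^{hom} = \E[n]$ in $L^\infty$ gives $k^2 n_\eps u_\eps \rightharpoonup k^2 n^{hom} u_*$; and the boundary term converges by the weak $H^{1/2}(\partial B_R)$-continuity of the continuous linear operator $\Lambda$. Thus $u_*$ satisfies the weak form of \eqref{eq:u0BR}, including the transmission conditions across $\partial D$ inherited from the convergence of traces, and by the well-posedness of \eqref{eq:u0BR} (positivity of $a^{hom}$ and Proposition~\ref{prop:stabAbsorb_smallKR}) one gets $u_* = u_0$; the standard subsequence argument then upgrades to convergence of the whole family. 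The main obstacle is precisely the passage through the product of two weakly convergent sequences in the flux, which the div--curl compensated-compactness step resolves; the remaining care is bookkeeping the transmission interface and checking that the corrector convergences hold almost surely via the ergodic theorem.
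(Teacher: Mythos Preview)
Your proposal is correct and follows essentially the same route as the paper: extract weakly convergent subsequences from the uniform $H^1$-bound, build oscillating test fields from the adjoint corrector with the required convergences supplied by Birkhoff's theorem and corrector sublinearity, identify the flux limit via compensated compactness, pass to the limit in the variational formulation, and conclude by uniqueness for \eqref{eq:u0BR}. The only presentational difference is that you invoke the Murat--Tartar div--curl lemma as a black box, whereas the paper unwraps the same mechanism by hand, testing the equation for $u_\eps$ against $\overline{\zeta}\,\alpha_i^\eps$ with $\zeta\in C_c^\infty(B_R)$ and $\alpha_i^\eps(x)=x_i+\eps\psi_i(x/\eps)$ and passing to the limit term by term.
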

\noindent Here, we extend $a^{hom}$ by $I$ in $B_R \setminus \overline{D}$.
\begin{proof}

Since a.s. $u_\eps$ is uniformly bounded in $H^1(B_R)$ independently of $\eps$, by Rellich-Kondrachov theorem, we can extract a subsequence, still denoted $u_\eps$ such that 
\begin{equation}
u_\eps \xrightharpoonup{H^1(B_R)} u,
\end{equation}
for a certain $u \in H^1(B_R)$. By Rellich's theorem we have then $u_\eps \xlongrightarrow{L^2(B_R)} u$. Similarly thanks to the uniform ellipticity of $a$, we have:

$$\norme{a_\eps \nabla u_\eps}_{L^2(B_R)} \leq \Lambda_a \norme{\nabla u_\eps}_{L^2(B_R)} \lesssim \norme{u^{inc}}_{H^1(B_R)}.$$
Therefore we can also extract a subsequence of $a_\eps \nabla u_\eps$ such that 
$$a_\eps \nabla u_\eps \xrightharpoonup{L^2(B_R)} F^\star $$
for some $F^\star \in L^2(B_R)$. \\
We show that $u = u_0$ and $F^\star = a^{hom} \nabla u_0$. \\
By Birkhoff's ergodic theorem and the strong convergence of $u_\eps$ to $u$ in $L^2(D)$, we have that
$$n_\eps u_\eps \xrightharpoonup{L^2(B_R)} \E[n] u = n^{hom} u.$$
Furthermore, the $DtN$ operator is continuous from $H^{\frac{1}{2}}(\partial B_R)$ to $H^{-\frac{1}{2}}(\partial B_R)$ and the trace operator is continuous from $H^1(B_R)$ to $H^{\frac{1}{2}}(\partial B_R)$. Thus
$$\Lambda(u_\eps) \xrightharpoonup{H^{-\frac{1}{2}}(\partial B_R)} \Lambda(u).$$
By passing to the limit inside the variational formulation of \eqref{eq:mainBR} for $u_\eps$, one finds that, for all $v \in H^1(B_R)$, 
\begin{equation} \label{eq:limF}
\begin{split}
\int_{D} F^\star \cdot \nabla \overline{v} - & k^2 n^{hom} u \overline{v} + \int_{B_R \setminus \overline{D}} F^\star \cdot \nabla \overline{v} - k^2 n_0 u \overline{v} \\
& \hspace{0.7cm} - \left \langle \Lambda(u), v \right\rangle_{H^{-\frac{1}{2}}(\partial B_R), H^{\frac{1}{2}}(\partial B_R) } = \left \langle \Lambda(u^{inc}), v \right\rangle_{H^{-\frac{1}{2}}(\partial B_R), H^{\frac{1}{2}}(\partial B_R) }.
\end{split}
\end{equation}
For $i \in \intInter{1,d}$, let $\psi_i \in H^1_{loc}(\R^d)$ be the adjoint corrector satisfying 
\begin{equation}
- \nabla \cdot a^*(\nabla \psi_i + e_i) = 0 \quad \textrm{in } \mathcal{D}'(\R^d),
\end{equation}
with the anchoring condition $\frac{1}{\vert \square_0 \vert }\int_{\square_0} \psi_i = 0$. \\
Moreover, $\nabla \psi_i$ is stationary, verifies $\E[\nabla \psi_i] = 0$ and admits finite second order moment. 
\noindent Now for all $x \in \R^d$, let
$$\alpha_i(x) := x_i + \psi_i(x), $$
and 
$$\alpha_i^\eps(x) := \eps \alpha_i(\frac{x}{\eps}) =  x_i + \eps \psi_i(\frac{x}{\eps}). $$
Thanks to the sublinearity of $\psi_i$, $\alpha_i^\eps \xrightarrow{L^2(B_R)} x_i$. Moreover by Birkhoff's theorem $\nabla \alpha_i^\eps \xrightharpoonup{L^2(B_R)} e_i$. Thus
$$\alpha_i^\eps \xrightharpoonup{H^1(B_R)} x_i.$$
Similarly by Birkhoff's theorem, $a_\eps^* \nabla \alpha_i^\eps \xrightharpoonup{L^2(B_R)} \E[a^* \nabla \alpha_i] = \E[a^* (e_i + \nabla \psi_i)]$. \\
Moreover, since $\E[\nabla \phi_j \cdot a^* (e_i + \nabla \psi_i)]  = \E[\nabla \psi_i \cdot a ( e_j + \nabla \phi_j)] = 0$ for $i,j \in  \intInter{1,d},$
\begin{equation}
\begin{split}
\E[e_j \cdot a^* \nabla \alpha_i] & = \E[(e_j + \nabla \phi_j)\cdot a^* ( e_i + \nabla \psi_i) \\
& =  \E[a (e_j + \nabla \phi_j)\cdot e_i  \\
& = a^{hom}_{ji} = e_i \cdot a^{hom} e_j.
\end{split}
\end{equation}
For $\zeta \in C_c^\infty(B_R)$ consider the variational formulation of the problem solved by $u_\eps$ with the test function $\zeta \alpha^\eps_i$,
\begin{equation}
\begin{split}
0 & = \int_{B_R} a_\eps \nabla u_\eps \cdot \nabla (\overline{\zeta} \alpha^\eps_i) - k^2 n_\eps u_\eps \overline{\zeta} \alpha^\eps_i \\
& = \int_{B_R} a_\eps \nabla u_\eps \cdot (\nabla \overline{\zeta}) \alpha^\eps_i - (\nabla \overline{\zeta}) u_\eps \cdot a_\eps^* \nabla \alpha^\eps_i - k^2 n_\eps u_\eps \overline{\zeta} \alpha^\eps_i. \\ 
\end{split}
\end{equation}
Then, by passing to the limit
\begin{equation}
\begin{split}
\int_{D} F^\star \cdot (\nabla \zeta) x_i - & (\nabla \zeta) u \cdot (a^{hom})^* e_i - k^2 n^{hom} u \zeta x_i \\
& +\int_{B_R \setminus \overline{D}} F^\star \cdot (\nabla \zeta) x_i - a^{hom} (\nabla \zeta) u \cdot  e_i - k^2 n_0 u \zeta x_i  = 0.\\ 
\end{split}
\end{equation}
Moreover by \eqref{eq:limF}
\begin{equation}
\begin{split}
\int_{B_R} F^\star \cdot (\nabla \overline{\zeta}) x_i & = \int_{B_R} F^\star \cdot \nabla (\overline{\zeta} x_i) - F \cdot \overline{\zeta} e_i \\
& = \int_{B_R \setminus \overline{D}}k^2 n_0 u \overline{\zeta} x_i + \int_D  k^2 n^{hom} u \overline{\zeta} x_i - \int_{B_R} F^\star \cdot \overline{\zeta} e_i.
\end{split}
\end{equation}
Since $\zeta u \in H^1_0(B_R)$, an integration by parts yields 
\begin{equation}
\begin{split}
\int_{B_R} a^{hom} (\nabla \overline{\zeta}) u \cdot e_i = \int_{B_R} - a^{hom} \nabla u \cdot \overline{\zeta} e_i  
\end{split}
\end{equation}
which yields that for any $\zeta \in C^\infty_c(B_R)$ and for any $i \in \intInter{1,d}$, 
\begin{equation}
\int_{B_R} \overline{\zeta} e_i \cdot (a^{hom} \nabla u- F^\star) = 0.
\end{equation}
This implies that a.s. $F^\star = a^{hom} \nabla u$ in $\mathcal{D}'(B_R)$, thus in $L^2(B_R)$. \\
Finally, \eqref{eq:limF} can then be rewritten as
\begin{equation} 
\begin{split}
\int_{B_R} a^{hom} \nabla u \cdot \nabla \overline{v} - k^2 n^{hom} u \overline{v} - \left \langle \Lambda(u), v \right\rangle_{H^{-\frac{1}{2}}(\partial B_R), H^{\frac{1}{2}}(\partial B_R) } =\\ & \hspace{-1.6cm}  \left \langle \Lambda(u^{inc}), v \right\rangle_{H^{-\frac{1}{2}}(\partial B_R), H^{\frac{1}{2}}(\partial B_R) }.
\end{split}
\end{equation}
We get $u = u_0$. Moreover by uniqueness of the limit, we proved convergence of $u_\eps$ and $a_\eps \nabla u_\eps$ and not just of a subsequence. 
\end{proof}

\section{Homogenization with a less regular solution} \label{appendix:homLessReg}

In Section~\ref{section:2scErr} Proposition~\ref{prop:2scErrDecay}, we proved an error estimate for the two-scale expansion when ${u_0}_{|_D} \in W^{2, \infty}(D)$. This result still holds for less regular $u_0$ as stated in Proposition~\ref{prop:2scErrDecaylessRegular}. \\
As done in \cite{armstrong2019quantitative}, we consider an extension of ${u_0}_{|_D}$ that we denote $\widehat{u_0} \in W^{1 + \alpha, p}(\R^d)$. $\widehat{u_0}$ is defined through the Sobolev extension theorem stated below.
\begin{lem}[Sobolev extension theorem {\cite[Proposition~B.14]{armstrong2019quantitative}}] \label{lem:extensionThm}
Let $D$ be a bounded Lipschitz domain, $\alpha \in (0, \infty)$ and $p \in (1, \infty)$. The restriction operator ${W^{\alpha, p}(\R^d) \rightarrow W^{\alpha, p}(D)}$ has a bounded linear right inverse. That is, there exists a linear operator
$$\text{Ext}: W^{\alpha, p}(D) \rightarrow W^{\alpha, p}(\R^d), $$ 
such that, for every $u \in W^{\alpha, p}(D)$, 
$$\text{Ext}(u) = u~a.e.~ \text{ in } D,$$
and 
$$ \norme{\text{Ext}(u)}_{W^{\alpha, p}(\R^d)} \lesssim \norme{u}_{W^{\alpha, p}(D)}.$$
\end{lem}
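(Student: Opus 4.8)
The plan is to first build a single, \emph{universal} extension operator that is bounded for all integer orders simultaneously, and then to recover every fractional order $\alpha$ by real interpolation; this reduces the statement to two well-separated ingredients.

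First I would establish the integer-order case via the Calder\'on--Stein construction. Since $D$ is a bounded Lipschitz domain, $\partial D$ is compact and can be covered by finitely many open cylinders $U_1, \dots, U_N$ in each of which, after a rigid rotation, $\partial D$ is the graph of a Lipschitz function $\gamma$ and $D$ lies on one side. Adjoining a relatively compact interior open set $U_0 \subset D$ and choosing a smooth partition of unity $\{\psi_j\}_{j=0}^N$ subordinate to $\{U_j\}_{j=0}^N$, it suffices to extend each piece $\psi_j u$. The interior piece is extended by zero. On a boundary chart I would flatten the graph by the bi-Lipschitz map $(x', x_d) \mapsto (x', x_d - \gamma(x'))$ and apply Stein's one-sided extension operator, which averages $u$ against a kernel supported on the far side of the hyperplane and is designed so as to be bounded $W^{m,p} \to W^{m,p}$ for \emph{every} $m \in \mathbb{N}_0$ and $1 < p < \infty$ at once, with operator norm depending only on $d$, $m$, $p$ and the Lipschitz character of $\partial D$. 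Patching the pieces back with $\{\psi_j\}$ produces a bounded linear operator $E \colon W^{m,p}(D) \to W^{m,p}(\R^d)$, \emph{independent of} $m$ and $p$, satisfying $E u = u$ a.e.\ in $D$.

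Next I would treat non-integer $\alpha$ by real interpolation. Fix $m_0 = \lfloor \alpha \rfloor$ and $m_1 = m_0 + 1$, so that $\alpha = (1-\theta) m_0 + \theta m_1$ with $\theta = \alpha - m_0 \in (0,1)$. On the whole space one has the identification $W^{\alpha, p}(\R^d) = (W^{m_0,p}(\R^d), W^{m_1,p}(\R^d))_{\theta, p}$ between the Slobodeckij space and the real interpolation space (see \cite[Ch.~7]{adams2003sobolev}). Since the operator $E$ of the previous step is bounded between both endpoint pairs, the exactness of the real interpolation functor gives that it is bounded $(W^{m_0,p}(D), W^{m_1,p}(D))_{\theta,p} \to W^{\alpha,p}(\R^d)$, and composing with restriction still yields the identity on $D$.

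The main obstacle --- and the only genuinely delicate point --- is matching the \emph{intrinsic} fractional norm on $D$ to the interpolation norm, i.e.\ proving $(W^{m_0,p}(D), W^{m_1,p}(D))_{\theta, p} = W^{\alpha,p}(D)$ with equivalent norms. The continuous inclusion of the interpolation space into the intrinsic one is elementary, but the reverse bound presupposes an extension, so a naive argument would be circular. I would break the circularity precisely with the operator $E$ of Step~1: since $E$ and the restriction $R$ form a retraction--coretraction pair ($R E = \mathrm{Id}$) bounded on every integer-order endpoint, $W^{\alpha,p}(D)$ is realized as a complemented retract of $W^{\alpha,p}(\R^d)$, and the transferred projection $E R$ identifies the domain interpolation space with the intrinsic Slobodeckij space. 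With this identification in hand, the interpolation estimate of the previous paragraph reads $\norme{E u}_{W^{\alpha,p}(\R^d)} \lesssim \norme{u}_{W^{\alpha,p}(D)}$, which is exactly the claim, with constant depending only on $d$, $\alpha$, $p$ and the Lipschitz character of $\partial D$.
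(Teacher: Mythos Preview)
The paper does not prove this lemma at all: it is stated as a citation to \cite[Proposition~B.14]{armstrong2019quantitative} and used as a black box, so there is no ``paper's own proof'' to compare against. Your sketch is a correct and standard route to the result --- Stein's universal extension for integer orders followed by real interpolation, with the retraction--coretraction trick to identify the intrinsic and interpolated fractional spaces on $D$ --- and is essentially how the cited reference (and Stein's original construction) proceeds.
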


We derive a convergence rate of $u_\eps$ towards the two-scale expansion when ${{u_0}_{|_D} \in  W^{1 + \alpha, p}(D)}$.
\begin{prop}[$H^1$- convergence of the two-scale expansion for ${{u_0}_{|_D} \in  W^{1 + \alpha, p}(D)}$] \label{prop:2scErrDecaylessRegular}
For $p \in (2, \infty]$, $\alpha \in (\frac{1}{p},1]$, suppose that $u_0 \in H^1(B_R)$ such that ${{u_0}_{|_D} \in W^{1 + \alpha, p}(D)}$ then
\begin{equation} \label{eq:2scBClessReg} 
\begin{split} 
\norme{u_\eps - \right.&\left. u_0}_{H^1(B_R \setminus \overline{D})} + \norme{u_\eps - u_0 - \widehat{u_{1,\eps}}}_{H^1(D)} \lesssim \eps^{\frac{1}{2}} \mu_d(\frac{1}{\eps})^{\frac{1}{2}} \widehat{\chi_{\eps,p}}\norme{u_0}_{W^{1 + \alpha, p}(D)},
\end{split}
\end{equation}
where $\widehat{u}_{1,\eps}$ is defined by
$$\widehat{u}_{1,\eps}(x) := \mathbbm{1}_{D}(x) \sum_{i=1}^d \phi_i \biggl(\frac{x}{\eps} \biggr) \partial_i \widehat{u_0} * \xi_\eps(x)  \quad \textrm{for } x \in B_R,$$
with the standard mollifier $\xi_\eps$ defined by
\begin{equation} \label{eq:mollifier}
\xi_\eps(x) := \eps^{-d} \left\{
\begin{array}{ll}
\vsd\dst c_d \exp(-\frac{1}{1-|\frac{x}{\eps}|^2}) &\textrm{for}\,\,|\frac{x}{\eps}|< 1,\\
\vsd\dst 0 &\textrm{for }\,\,|\frac{x}{\eps}| \geq 1,\\
\end{array}
\right.
\end{equation}
and $c_d$ is such that 
$$\int_{\R^d} \xi_\eps(x) \mathrm{d}x = 1.$$  
Here $\widehat{\chi_{\eps,p}}$ denotes a random variable satisfying the stochastic integrability \eqref{eq:Exp_corrector_bounds}.
\end{prop}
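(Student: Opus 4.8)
The strategy is to reproduce the architecture of the proof of Proposition~\ref{prop:2scErrDecay}, replacing the gradient $\nabla u_0$ — which is no longer controlled pointwise together with its derivatives — by its mollification $\nabla\widehat{u_0}*\xi_\eps$, where $\widehat{u_0}$ is the Sobolev extension of Lemma~\ref{lem:extensionThm} and $\xi_\eps$ is the mollifier \eqref{eq:mollifier}. Since the less-regular analogue of Proposition~\ref{prop:2scErrDecayBC}, namely Proposition~\ref{prop:2scErrDecayBClessRegular}, already provides the two-scale estimate \emph{with} the boundary corrector, it then suffices to estimate the boundary corrector $\widehat{v}_\eps$ in $H^1(B_R\setminus\overline{D})\times H^1(D)$ and to conclude by the triangle inequality, exactly as in Proposition~\ref{prop:2scErrDecay}. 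The two genuinely new ingredients compared with the $W^{2,\infty}$ case are: (i) the standard mollifier smoothing bounds $\norme{\nabla\widehat{u_0}*\xi_\eps-\nabla\widehat{u_0}}_{L^p(\R^d)}\lesssim\eps^{\alpha}\norme{u_0}_{W^{1+\alpha,p}(D)}$ and $\norme{\nabla(\nabla\widehat{u_0}*\xi_\eps)}_{L^p(\R^d)}\lesssim\eps^{\alpha-1}\norme{u_0}_{W^{1+\alpha,p}(D)}$, which replace the pointwise control of second derivatives; and (ii) the use of the corrector bounds of Proposition~\ref{prop:corrbounds} in $L^q$ rather than $L^2$, with the conjugate exponent $q=2p/(p-2)$.

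First I would set up the mollified two-scale expansion $\widehat{w}_\eps:=u_0+\eps\widehat{u}_{1,\eps}$ and derive, as in Lemma~\ref{lem:twoScError}, the transmission problem satisfied by $\widehat{Z}_\eps:=u_\eps-\widehat{w}_\eps$. The computation is identical to that of Lemma~\ref{lem:twoScError} up to one extra interior source term, $a^{hom}(\nabla\widehat{u_0}*\xi_\eps-\nabla u_0)$, coming from the mismatch between the homogenized flux $a^{hom}\nabla u_0$ and the mollified gradient used in $\widehat{w}_\eps$; the flux corrector $\sigma$ and the potential $\beta$ enter exactly as before through the skew-symmetry and divergence identities of Definition~\ref{def:ExtendedCorrector}. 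Invoking Proposition~\ref{prop:2scErrDecayBClessRegular} then reduces the problem to controlling $\widehat{v}_\eps$.

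The technical heart, and the step I expect to be the main obstacle, is the estimation of $\norme{\widehat{v}_\eps}_{H^1}$ in the reduced-regularity setting. I would lift the trace jump of $\widehat{v}_\eps$ across $\partial D$ by $\eps\eta_\eps\widehat{u}_{1,\eps}$, with the boundary cutoff $\eta_\eps$ supported in a strip $\mathcal{S}_{\eta_\eps}$ of width $\asymp\eps\mu_d(\tfrac{1}{\eps})$, write the variational formulation for $\widetilde{v}_\eps:=\widehat{v}_\eps-\eps\eta_\eps\widehat{u}_{1,\eps}$, and apply the coercivity of Proposition~\ref{prop:stabAbsorb_smallKR}. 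Each resulting term is a product of a corrector ($\phi^\eps$, $\sigma^\eps$, $\beta^\eps$ or a gradient thereof) by a factor built from $\nabla\widehat{u_0}*\xi_\eps$; I would split it by Hölder's inequality with exponents $(q,p)$, bound the mollified factor in $L^p$ by the smoothing estimates above, and bound the corrector factor in $L^q$ over $\mathcal{S}_{\eta_\eps}$ using Proposition~\ref{prop:corrbounds} together with its mean-value property for $\nabla\phi$. Balancing the gain $\eps^{\alpha}$ (respectively $\eps^{\alpha-1}$ after differentiation) against the corrector growth $\mu_d(\tfrac{1}{\eps})$ and the volume of the strip is precisely what produces the rate $\eps^{1/2}\mu_d(\tfrac{1}{\eps})^{1/2}$ and fixes the $p$-dependence of the random prefactor; the hypothesis $\alpha>\tfrac{1}{p}$ guarantees that the relevant exponents stay admissible.

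Finally I would collect the bounds to define $\widehat{\chi_{\eps,p}}$ as a sum of localized corrector sums $\widetilde{\chi_{\eps,p}^1},\widetilde{\chi_{\eps,p}^2}$, renormalized by the number of $\eps$-cubes meeting $\mathcal{S}_{\eta_\eps}$ so that the prefactor $\eps^d/(\eps\mu_d(\tfrac{1}{\eps}))\approx|P_\eps(\mathcal{S}_{\eta_\eps})|^{-1}$ is the correct renormalization, and verify that $\widehat{\chi_{\eps,p}}$ satisfies the stochastic integrability \eqref{eq:Exp_corrector_bounds} by combining the corrector integrability of Proposition~\ref{prop:corrbounds} with the product rule of Lemma~\ref{lem:holder}. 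The triangle inequality against the output of Proposition~\ref{prop:2scErrDecayBClessRegular} then yields \eqref{eq:2scBClessReg}.
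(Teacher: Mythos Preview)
Your proposal is correct and follows essentially the same route as the paper: invoke Proposition~\ref{prop:2scErrDecayBClessRegular}, lift the jump of $\widehat{v}_\eps$ by $\eps\eta_\eps\widehat{u}_{1,\eps}$, apply coercivity to the variational formulation, and estimate each term on the support $\mathcal{S}_{\eta_\eps}$ via the H\"older split with exponent $q=2p/(p-2)$ combined with the corrector bounds of Proposition~\ref{prop:corrbounds}. The paper packages your ``standard mollifier smoothing bounds'' and the H\"older step as citations to Lemmas~6.7, 6.8, and 6.12 of \cite{armstrong2019quantitative} (recorded here as Lemmas~\ref{lem:lemma67}, \ref{lem:lemma68}, \ref{lem:lemma612}); in particular the strip estimate $\norme{\nabla\widehat{u_0}}_{L^p(\mathcal{S}_{\eta_\eps}+2\eps\square_0)}\lesssim(\eps\mu_d(\tfrac{1}{\eps}))^{1/p}\norme{u_0}_{W^{1+\alpha,p}(D)}$ of Lemma~\ref{lem:lemma612} is the precise mechanism by which ``the volume of the strip'' converts the exponent $\tfrac{p-2}{2p}$ from the corrector sum into the final $\eps^{1/2}\mu_d(\tfrac{1}{\eps})^{1/2}$, so make sure you invoke it explicitly rather than only the global mollifier bounds on $\R^d$.
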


In order to prove the previous theorem, we introduce the boundary corrector and start by proving the result with the boundary corrector. 
\begin{prop}[$H^1$- convergence of the two-scale expansion with the boundary corrector for ${{u_0}_{|_D} \in W^{1 + \alpha, p}(D)}$] \label{prop:2scErrDecayBClessRegular}
For $p \in (2, \infty]$, $\alpha \in (0,1]$, suppose that $u_0 \in H^1(B_R)$ such that ${{u_0}_{|_D} \in W^{1 + \alpha, p}(D)}$ then
\begin{equation}
\norme{u_\eps - u_0 - \widehat{u}_{1,\eps} - \widehat{v_\eps}}_{H^1(B_R)} \lesssim \eps^{\alpha} \mu_d(\frac{1}{\eps}) \chi_{\eps,p} \norme{u_0}_{W^{1 + \alpha, p}(D)},
\end{equation}
where the boundary corrector ${\widehat{v_\eps} \in H^1(B_R \setminus \overline{D}) \times H^1(D)}$ is the solution of
\begin{equation}  \label{eq:hatVeps}
\left\{\begin{aligned} 
& - \Delta \widehat{v_\eps} - k^2 \widehat{v_\eps} = 0 &&\textrm{in}\, B_R \setminus\overline{D},\\
&-\nabla\cdot a_\eps \nabla \widehat{v_\eps} - k^2 n_\eps \widehat{v_\eps} = 0 &&\textrm{in}\, D,\\
& \widehat{v_\eps}^- - \widehat{v_\eps}^+ = \eps \widehat{u}_{1,\eps} &&\textrm{on}\, \partial D, \\
& \nabla \widehat{v_\eps}^- \cdot \nu - a_\eps \nabla \widehat{v_\eps}^+ \cdot \nu = \eps \left(\nabla \cdot(\sigma_i^\eps \partial_i \widehat{u_0}* \xi_\eps)^+\right) \cdot \nu\\
& \hspace{5cm} - k^2 \eps (\beta^\eps \widehat{u_0} * \xi_\eps)^+ \cdot \nu   &&\textrm{on}\, \partial D, \\
& \nabla \widehat{v_\eps} \cdot \nu = \Lambda(\widehat{v_\eps}) && \textrm{on}\, \partial B_R.
 \end{aligned}
\right.
\end{equation}
$\chi_{\eps,p}$ is a random variable defined as
\begin{equation}
\chi_{\eps,p} := \left(\eps^d \sum_{z \in P_\eps(D)}  \mathcal{C}(z)^{\frac{2p}{p-2}}\right)^{\frac{p-2}{2p}},
\end{equation}
with $\mathcal{C}$ denoting the constant in Proposition~\ref{prop:corrbounds}. Moreover $\chi_{\eps,p}$ satisfies the stochastic integrability \eqref{eq:Exp_corrector_bounds}.

\end{prop}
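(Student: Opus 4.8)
The plan is to transpose the proof of Proposition~\ref{prop:2scErrDecayBC} to the mollified two-scale expansion, keeping track of the two additional error sources created by the mollification. First I would set $\widehat{w_\eps} := u_0 + \eps\widehat{u}_{1,\eps}$ and $\widehat{Z_\eps} := u_\eps - \widehat{w_\eps}$, and redo the computation of Lemma~\ref{lem:twoScError} with $\partial_i\widehat{u_0}*\xi_\eps$ in place of $\partial_i u_0$ inside the corrector. The only new feature is that the leading-order flux produced by $\widehat{w_\eps}$ is built from the \emph{mollified} gradient, whereas $u_0$ solves the homogenized equation with the \emph{true} gradient; splitting $\nabla u_0 = \nabla\widehat{u_0}*\xi_\eps + (\nabla u_0 - \nabla\widehat{u_0}*\xi_\eps)$ isolates an extra divergence-form term $\nabla\cdot\bigl((a_\eps - a^{hom})(\nabla u_0 - \nabla\widehat{u_0}*\xi_\eps)\bigr)$, together with an analogous term for $n_\eps - n^{hom}$. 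After using the skew-symmetry of $\sigma$ and $\nabla\cdot\beta = n - n^{hom}$ exactly as in the computation \eqref{eq:Feps}--\eqref{eq:Geps}, this shows that $\widehat{Z_\eps}$ solves a problem of the same shape as \eqref{eq:Zeps}, with interior source $\nabla\cdot\widehat{F_\eps} + k^2\widehat{G_\eps}$ (where $\widehat{F_\eps},\widehat{G_\eps}$ are defined as in \eqref{Feps}--\eqref{Geps} with $\partial_i u_0$ replaced by $\partial_i\widehat{u_0}*\xi_\eps$) and with boundary jumps that match, by construction, the data of \eqref{eq:hatVeps}. Consequently $\widehat{Z_\eps} - \widehat{v_\eps}$ has no jump across $\partial D$ and solves the clean interior problem analogous to \eqref{eq:Zepsveps}, so Proposition~\ref{prop:stabAbsorb_smallKR} yields
\[
\norme{u_\eps - u_0 - \eps\widehat{u}_{1,\eps} - \widehat{v_\eps}}_{H^1(B_R)} \lesssim \norme{\widehat{F_\eps}}_{L^2(D)} + \norme{\widehat{G_\eps}}_{L^2(D)} + \norme{(a_\eps - a^{hom})(\nabla u_0 - \nabla\widehat{u_0}*\xi_\eps)}_{L^2(D)}.
\]

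Next I would bound the right-hand side. Up to the prefactor $\eps$, the terms of $\widehat{F_\eps}$ and $\widehat{G_\eps}$ are finite sums of products (corrector)$\times$(derivative of the mollified field). The key estimate is the product bound: paving $D$ with $\eps$-cubes, using the a.s.\ corrector bound \eqref{eq:corrector_bounds} on each cube and a discrete Hölder inequality with the conjugate exponents $\frac p2$ and $\frac{p}{p-2}$, one obtains for $\Phi \in \{\phi,\sigma,\beta\}$ and any $g$
\[
\norme{\Phi^\eps\, g}_{L^2(D)} \lesssim \mu_d(\tfrac1\eps)\, \chi_{\eps,p}\, \norme{g}_{L^p(D)},
\]
where the exponent $\frac{2p}{p-2}$ in the definition of $\chi_{\eps,p}$ is precisely what makes the powers of $\eps$ coming from $\eps^d\sum_z$ cancel. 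Applying this with $g = \nabla(\partial_i\widehat{u_0}*\xi_\eps)$ for the top-order part of $\widehat{F_\eps}$, and with $g = \partial_i\widehat{u_0}*\xi_\eps$ or $g = \nabla u_0$ for the lower-order parts of $\widehat{F_\eps}$ and $\widehat{G_\eps}$, reduces everything to two standard mollifier estimates for $\widehat{u_0}\in W^{1+\alpha,p}(\R^d)$:
\[
\norme{\nabla(\nabla\widehat{u_0}*\xi_\eps)}_{L^p(\R^d)} \lesssim \eps^{\alpha-1}\norme{u_0}_{W^{1+\alpha,p}(D)}, \qquad \norme{\nabla u_0 - \nabla\widehat{u_0}*\xi_\eps}_{L^p(D)} \lesssim \eps^{\alpha}\norme{u_0}_{W^{1+\alpha,p}(D)},
\]
which follow from $\int\nabla\xi_\eps = 0$, $\int\xi_\eps = 1$, the scaling of $\xi_\eps$, and the translation characterisation of the Gagliardo seminorm of $\nabla\widehat{u_0}$. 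Combining the $\eps$ prefactor with $\eps^{\alpha-1}$ produces the top-order contribution $\eps^{\alpha}\mu_d(\tfrac1\eps)\chi_{\eps,p}$; the commutator term and the lower-order terms are $O(\eps^\alpha)$ or $O(\eps)$ and are absorbed since $\alpha\le 1$.

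Finally I would record that $\chi_{\eps,p}$ satisfies the stochastic integrability \eqref{eq:Exp_corrector_bounds}: this is the $\ell^{2p/(p-2)}$-analog of the estimate for $\chi_\eps$ and follows from \cite[Lemma~A4]{armstrong2019quantitative} together with \eqref{eq:Exp_corrector_bounds} for $\mathcal{C}$. The main obstacle I anticipate is not conceptual but the careful bookkeeping in the product estimate: one must justify that on each $\eps$-cube the mollified derivative can be treated as essentially constant (or controlled by its $L^p$ cube-average), so that the discrete Hölder step is legitimate, and verify that the exponent $\frac{2p}{p-2}$ is exactly the one making all powers of $\eps$ cancel, since a mismatched pairing would either destroy the $\eps^\alpha$ rate or spoil the integrability of $\chi_{\eps,p}$. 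As a consistency check, the limiting case $p=\infty$, $\alpha=1$ should recover Proposition~\ref{prop:2scErrDecayBC}, with $\chi_{\eps,\infty}=\chi_\eps$ and rate $\eps$.
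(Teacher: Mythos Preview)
Your proposal is correct and follows essentially the same approach as the paper: you identify the same modified source terms $\widehat{F_\eps}$, $\widehat{G_\eps}$ (including the mollification commutator $(a_\eps-a^{hom})(\nabla u_0-\nabla\widehat{u_0}*\xi_\eps)$, which the paper absorbs into the definition of $\widehat{F_\eps}$), and your product bound together with the mollifier approximation estimates are precisely the content of \cite[Lemmas~6.7 and~6.8]{armstrong2019quantitative}, which the paper quotes as Lemmas~\ref{lem:lemma67} and~\ref{lem:lemma68} rather than re-deriving. The ``essentially constant on $\eps$-cubes'' subtlety you flag is exactly what makes Lemma~\ref{lem:lemma68} work, and the exponent $\tfrac{2p}{p-2}$ matches for the reason you give.
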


Both results of Proposition~\ref{prop:2scErrDecaylessRegular} and Proposition~\ref{prop:2scErrDecayBClessRegular} were established for the Poisson equation in a bounded domain with Dirichlet or Neumann condition in \cite[Chapter~6]{armstrong2019quantitative}. The proofs below use similar arguments as the ones developed in \cite{armstrong2019quantitative}. 

\begin{proof}[Proof of Proposition~\ref{prop:2scErrDecayBClessRegular}]

We denote $\widehat{Z_\eps} := u_\eps - u_0 - \eps \widehat{u_{1,\eps}}$. \\
As in Proposition~\ref{prop:2scErrDecayBC}, the boundary layer $\widehat{v_\eps}$ solution of \eqref{eq:hatVeps} is constructed such that 
$\widehat{Z_\eps} - \widehat{v_\eps}$ is the unique solution in $H^1(B_R)$ of
\begin{equation} 
\left\{\begin{aligned} 
&-\nabla\cdot a_\eps \nabla (\widehat{Z_\eps} - \widehat{v_\eps}) - k^2 n_\eps (\widehat{Z_\eps} - \widehat{v_\eps}) = \nabla \cdot \widehat{F_\eps} + k^2 \widehat{G_\eps} &&\textrm{in}\, B_R,\\
& \nabla (\widehat{Z_\eps} - \widehat{v_\eps}) \cdot \nu = \Lambda(\widehat{Z_\eps} - \widehat{v_\eps}) &&\textrm{on}\, \partial B_R,\\
 \end{aligned}
\right.
\end{equation}
where $\widehat{F_\eps}$ and $\widehat{G_\eps}$ are defined by
\begin{equation}
\widehat{F_\eps} := \eps  (a_\eps \phi_i^\eps -\sigma_i^\eps)\nabla (\partial_i \widehat{u_0} * \xi_\eps) + (a_\eps - a^{hom}) \nabla (\widehat{u_0} * \xi_\eps - u_0) + \eps k^2 \beta^\eps \widehat{u_0} * \xi_\eps,
\end{equation}
and
\begin{equation}
\widehat{G_\eps} := \eps \left((n_\eps \phi_i^\eps - \beta_i^\eps) \partial_i \widehat{u_0} * \xi_\eps\right) + (n_\eps - n^{hom})(u_0 - \widehat{u_0} * \xi_\eps).
\end{equation}
Moreover, $\widehat{Z_\eps} - \widehat{v_\eps}$ verifies a.s.
\begin{equation} 
\norme{\widehat{Z_\eps} - \widehat{v_\eps}}_{H^1(B_R)} \lesssim \norme{\widehat{F_\eps}}_{L^2(D)} + \norme{\widehat{G_\eps}}_{L^2(D)}.    
\end{equation}
To prove \eqref{eq:2scBClessReg}, we hence need to prove that
$$\norme{\widehat{F_\eps}}_{L^2(D)} + \norme{\widehat{G_\eps}}_{L^2(D)} \lesssim \eps^{\alpha} \mu_d(\frac{1}{\eps}) \chi_{\eps,p} \norme{u_0}_{W^{1 + \alpha, p}(D)}. $$
By the triangle inequality, we immediately get
\begin{equation} \label{eq:H1err0}
\begin{split}
& \norme{\widehat{F_\eps}}_{L^2(D)} + \norme{\widehat{G_\eps}}_{L^2(D)} \\
& \hspace{1cm}\lesssim \eps \norme{ |\phi^\eps - \sigma^\eps| |\nabla \nabla \widehat{u_0} * \xi_\eps| }_{L^2(D)} + \eps \norme{ |\beta^\eps| |\widehat{u_0} * \xi_\eps| }_{L^2(D)} \\
& \hspace*{0cm}+ \eps \norme{ |\phi^\eps - \beta^\eps| \nabla \widehat{u_0} * \xi_\eps }_{L^2(D)} + \norme{\nabla (\widehat{u_0} * \xi_\eps - u_0)}_{L^2(D)} + \norme{\widehat{u_0} * \xi_\eps - u_0}_{L^2(D)}.
\end{split}
\end{equation}
It remains to estimate these five terms. \\
We recall a useful Lemma, proved in \cite{armstrong2019quantitative}, which allows us to estimate the three first terms of \eqref{eq:H1err0}.
\begin{lem}[{\cite[Lemma~6.8]{armstrong2019quantitative}}] \label{lem:lemma68}
Fix $\alpha \in (0, 1]$ and $p \in (2, \infty)$. Let $f \in L^2(D + 2 \eps \square_0)$, $g \in L^p(D + 2 \eps \square_0)$ and its Sobolev extension $\widehat{g} \in L^p(\R^d)$. Then
\begin{equation} \label{eq:lemma68_1}
\norme{f |\widehat{g} * \xi_\eps| }_{L^2(D)} \lesssim \left(\eps^d \sum_{z \in P_\eps(D)} \norme{f}_{\underline{L}^2 (z + 2 \eps \square_0)}^{\frac{2p}{p-2}} \right)^{\frac{p-2}{2p}}\norme{g}_{L^p(D + 2 \eps \square_0)},
\end{equation}
where $\norme{f}_{\underline{L}^2(z + 2 \eps \square_0)} := \norme{f}_{\underline{L}^2(z + 2 \eps \square_0)} := \left(\fint_{z + 2 \eps \square_0}  |f|^2 \right)^{\frac{1}{2}}$. \\
Moreover, if $g \in W^{\alpha, p}(D + 2 \eps \square_0)$, then
\begin{equation}  \label{eq:lemma68_2}
\norme{f |\nabla(\widehat{g} * \xi_\eps)| }_{L^2(D)} \lesssim \eps^{\alpha-1} \left(\eps^d \sum_{z \in P_\eps(D)}\norme{f}_{\underline{L}^2 (z + 2 \eps \square_0)}^{\frac{2p}{p-2}} \right)^{\frac{p-2}{2p}}\norme{g}_{W^{\alpha,p}(D + 2 \eps \square_0)}.
\end{equation}

\end{lem}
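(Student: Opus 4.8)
The plan is to prove both bounds by one localization-and-H\"older scheme. Set $q := \frac{2p}{p-2}$ and note the identity $\frac{2}{p} + \frac{2}{q} = 1$, so that $\frac p2$ and $\frac q2$ are H\"older-conjugate; this is exactly why the exponent $\frac{2p}{p-2}$ appears on the right-hand side. First I would cover $D$ by the $\eps$-cubes $\{z + \eps\square_0\}_{z \in P_\eps(D)}$ and record the elementary properties of the mollifier, namely $\xi_\eps \geq 0$, $\int_{\R^d}\xi_\eps = 1$, $\operatorname{supp}\xi_\eps \subset B_\eps(0)$, $\norme{\xi_\eps}_{L^\infty(\R^d)} \lesssim \eps^{-d}$ and $\norme{\nabla \xi_\eps}_{L^\infty(\R^d)} \lesssim \eps^{-d-1}$. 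The common structure is then to bound $\widehat g * \xi_\eps$ (respectively $\nabla(\widehat g * \xi_\eps)$) pointwise on $z+\eps\square_0$ by a local average of $\widehat g$ over the dilate $z + 2\eps\square_0$, to insert this into $\int_D |f|^2 |\,\cdot\,|^2$, and to split the resulting cube-sum with the conjugate pair $(\tfrac p2, \tfrac q2)$.

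For \eqref{eq:lemma68_1} the pointwise step is immediate: for $x \in z + \eps\square_0$, using $\operatorname{supp}\xi_\eps \subset B_\eps(0)$ and $\norme{\xi_\eps}_{L^\infty}\lesssim\eps^{-d}$,
\[
|\widehat g * \xi_\eps(x)| \lesssim \fint_{B_\eps(x)} |\widehat g| \lesssim \fint_{z + 2\eps\square_0} |\widehat g| \leq \norme{\widehat g}_{\underline{L}^p(z + 2\eps\square_0)},
\]
the last inequality by Jensen. Substituting into $\int_D |f|^2 |\widehat g * \xi_\eps|^2$, localizing over the covering and estimating $\int_{z+\eps\square_0}|f|^2 \lesssim \eps^d \norme{f}_{\underline{L}^2(z+2\eps\square_0)}^2$, I would apply H\"older over $z$ with exponents $\frac p2$ and $\frac q2$, and then use the finite overlap of the dilates $z+2\eps\square_0$ to convert $\sum_z \norme{\widehat g}_{\underline{L}^p(z+2\eps\square_0)}^p$ into $\eps^{-d}\norme{\widehat g}_{L^p(D+2\eps\square_0)}^p$. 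Collecting the powers of $\eps$ reproduces the weight $\eps^{2d/q}$ carried by the right-hand side of \eqref{eq:lemma68_1}, and the Sobolev extension theorem (Lemma~\ref{lem:extensionThm}) replaces $\norme{\widehat g}_{L^p}$ by $\norme{g}_{L^p}$.

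The bound \eqref{eq:lemma68_2} differs only in the pointwise step, which is the substantive one. Since $\int_{\R^d}\nabla\xi_\eps = 0$, I would write $\nabla(\widehat g * \xi_\eps)(x) = \int_{\R^d} \nabla\xi_\eps(y)\bigl(\widehat g(x-y) - \langle \widehat g\rangle_{B_\eps(x)}\bigr)\,\mathrm dy$, whence $\norme{\nabla\xi_\eps}_{L^\infty}\lesssim\eps^{-d-1}$ yields $|\nabla(\widehat g * \xi_\eps)(x)| \lesssim \eps^{-1}\fint_{B_\eps(x)}\bigl|\widehat g - \langle\widehat g\rangle_{B_\eps(x)}\bigr|$. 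The gain of $\eps^\alpha$, which produces the prefactor $\eps^{\alpha-1}$, comes from a fractional Poincar\'e inequality on $B_\eps(x)$ bounding this oscillation by $\eps^{\alpha - d/p}[\widehat g]_{W^{\alpha,p}(z+2\eps\square_0)}$ in terms of the Gagliardo seminorm. From here the argument is identical to that of \eqref{eq:lemma68_1}, with $\norme{\widehat g}_{\underline{L}^p}$ replaced by the localized seminorm, and the same $\eps$-bookkeeping gives the stated weight.

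I expect the main obstacle to be the aggregation step in \eqref{eq:lemma68_2}: summing the localized Gagliardo seminorms $[\widehat g]_{W^{\alpha,p}(z+2\eps\square_0)}^p$ back into the global seminorm is not a plain change of variables as in the $L^p$ case, since each is a double integral over pairs $(y,y')$. The required superadditivity follows from the finite overlap of the dilates $z+2\eps\square_0$ together with the fact that only pairs with $|y-y'| \lesssim \eps$ contribute, so that each such pair lies in boundedly many cubes; making this precise, and checking that the overlap constant depends only on $d$, is the one delicate point. By contrast the fractional Poincar\'e inequality and the $L^2$--$L^p$ H\"older bookkeeping are routine once the exponent identity $\frac2p + \frac2q = 1$ is in hand.
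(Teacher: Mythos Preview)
The paper does not supply its own proof of this lemma: it is quoted verbatim as \cite[Lemma~6.8]{armstrong2019quantitative} and used as a black box in the proof of Proposition~\ref{prop:2scErrDecayBClessRegular}. Your localization-and-H\"older scheme is exactly the argument given in that reference, so the proposal is correct and there is nothing further to compare.
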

Now, using \eqref{eq:lemma68_2}, with $f = \phi^\eps$ and $g = \nabla \widehat{u_0}$ and the corrector estimate of Proposition~\ref{prop:corrbounds}, we obtain
\begin{equation} \label{eq:H1err1}
\begin{split}
\norme{ |\phi^\eps | \right.&\left. |\nabla (\nabla \widehat{u_0} * \xi_\eps)| }_{L^2(D)} \\
& \lesssim \eps^{\alpha-1} \left(\eps^d \sum_{z \in P_\eps(D)} \norme{\phi^\eps}_{\underline{L}^2(z + 2\eps \square_0))}^{\frac{2p}{p-2}} \right)^{\frac{p-2}{2p}}\norme{\nabla \widehat{u_0}}_{W^{\alpha,p}(D + 2 \eps \square_0)} \\
& \lesssim \eps^{\alpha-1} \left(\eps^d \sum_{z \in P_\eps(D)} \norme{\phi}_{\underline{L}^2 (2\square_0)}^{\frac{2p}{p-2}} \right)^{\frac{p-2}{2p}}\norme{\nabla u_0}_{W^{\alpha,p}(D)} \\
& \lesssim \eps^{\alpha-1}\mu_d(\frac{1}{\eps}) \left(\eps^d \sum_{z \in P_\eps(D)} \mathcal{C}(z)^{\frac{2p}{p-2}} \right)^{\frac{p-2}{2p}}\norme{\nabla u_0}_{W^{\alpha,p}(D)}.
\end{split}
\end{equation}
Similarly, with $f = \phi^\eps - \beta^\eps$ and $g = \widehat{u_0}$,
\begin{equation} \label{eq:H1err2}
\norme{ |\phi^\eps - \beta^\eps| | \nabla \widehat{u_0} * \xi_\eps| }_{L^2(D)} \lesssim \eps^{\alpha-1}\mu_d(\frac{1}{\eps}) \left(\eps^d \sum_{z \in P_\eps(D)} \mathcal{C}(z)^{\frac{2p}{p-2}} \right)^{\frac{p-2}{2p}} \norme{u_0}_{W^{\alpha,p}(D)}.
\end{equation}
Now, using \eqref{eq:lemma68_1} with $f = \beta^\eps$ and $g = u_0$, we get
\begin{equation} \label{eq:H1err3}
\norme{|\beta^\eps| |u_0 * \xi_\eps| }_{L^2(D)} \lesssim  \mu_d(\frac{1}{\eps}) \left(\eps^d \sum_{z \in P_\eps(D)} \mathcal{C}(z)^{\frac{2p}{p-2}} \right)^{\frac{p-2}{2p}} \norme{u_0}_{L^p(D)}.
\end{equation}
To estimate the last two terms of \eqref{eq:H1err0}, we recall another useful lemma.
\begin{lem}[{\cite[Lemma~6.7]{armstrong2019quantitative}}] \label{lem:lemma67}
Fix $1 \leq q \leq p < \infty$ and $0 < \alpha \leq 1$. Let ${g \in W^{\alpha, p}(D + 2 \eps \square_0)}$ and its Sobolev extension $\widehat{g} \in W^{\alpha, p}(\R^d)$. Then
\begin{equation}
\norme{g - (\widehat{g} * \xi_\eps)}_{L^q(D)} \lesssim |D|^{\frac{1}{q} - \frac{1}{p}} \eps^{\alpha} \norme{g}_{W^{\alpha, p}(D + 2 \eps \square_0)}.
\end{equation} 

\end{lem}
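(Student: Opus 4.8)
The plan is to first reduce the $L^q$ estimate to an $L^p$ estimate, and then to establish the latter by comparing $g$ with its mollification through the Gagliardo seminorm. Since $D$ is bounded and $q\le p$, Hölder's inequality gives immediately $\norme{g-\widehat{g}*\xi_\eps}_{L^q(D)}\le |D|^{\frac1q-\frac1p}\norme{g-\widehat{g}*\xi_\eps}_{L^p(D)}$, so it suffices to prove $\norme{g-\widehat{g}*\xi_\eps}_{L^p(D)}\lesssim\eps^{\alpha}\norme{g}_{W^{\alpha,p}(D+2\eps\square_0)}$. Using that $\int_{\R^d}\xi_\eps=1$ (by \eqref{eq:mollifier}) and that $\widehat{g}=g$ a.e.\ on $D$ (Lemma~\ref{lem:extensionThm}), for $x\in D$ I would write $g(x)-(\widehat{g}*\xi_\eps)(x)=\int_{\R^d}\xi_\eps(y)\bigl(\widehat{g}(x)-\widehat{g}(x-y)\bigr)\,\mathrm{d}y$ and apply Jensen's inequality with the probability density $\xi_\eps$ to obtain $|g(x)-(\widehat{g}*\xi_\eps)(x)|^p\le\int_{\R^d}\xi_\eps(y)|\widehat{g}(x)-\widehat{g}(x-y)|^p\,\mathrm{d}y$.

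For $\alpha\in(0,1)$, I would bound $\xi_\eps(y)\lesssim\eps^{-d}\mathbbm{1}_{|y|\le\eps}$, which is legitimate since $\xi_\eps$ is supported in $B_\eps(0)\subset 2\eps\square_0$ and is bounded by $\eps^{-d}$ up to a dimensional constant. On this support one has $|y|^{d+\alpha p}\le\eps^{d+\alpha p}$, so inserting the weight $|y|^{-(d+\alpha p)}$ yields
\[
\eps^{-d}\int_{|y|\le\eps}|\widehat{g}(x)-\widehat{g}(x-y)|^p\,\mathrm{d}y\le\eps^{\alpha p}\int_{|y|\le\eps}\frac{|\widehat{g}(x)-\widehat{g}(x-y)|^p}{|y|^{d+\alpha p}}\,\mathrm{d}y.
\]
Integrating over $x\in D$, using Fubini and the substitution $z=x-y$ recognizes the Gagliardo double integral: since $x\in D$ and $|x-z|\le\eps$ force $z\in D+2\eps\square_0$, both $x$ and $z$ lie in the region where $\widehat{g}=g$, hence $\int_D|g-\widehat{g}*\xi_\eps|^p\lesssim\eps^{\alpha p}[g]_{W^{\alpha,p}(D+2\eps\square_0)}^p\le\eps^{\alpha p}\norme{g}_{W^{\alpha,p}(D+2\eps\square_0)}^p$, which is the desired $L^p$ bound.

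The endpoint $\alpha=1$ requires a separate and easier argument, since the $W^{1,p}$ norm is not captured by the Gagliardo seminorm: I would use the identity $\widehat{g}(x)-\widehat{g}(x-y)=\int_0^1\nabla\widehat{g}(x-ty)\cdot y\,\mathrm{d}t$, valid a.e.\ for $\widehat{g}\in W^{1,p}(\R^d)$, bound $|y|\le\eps$ on the support of $\xi_\eps$, and apply Minkowski's integral inequality in $L^p(D)$ together with the translation invariance of the Lebesgue measure to obtain $\norme{g-\widehat{g}*\xi_\eps}_{L^p(D)}\le\eps\norme{\nabla\widehat{g}}_{L^p(D+2\eps\square_0)}\lesssim\eps\norme{g}_{W^{1,p}(D+2\eps\square_0)}$, the last step using the extension bound of Lemma~\ref{lem:extensionThm}.

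The main technical point is the domain bookkeeping: I must track that every translate $x-y$ entering the convolution stays inside $D+2\eps\square_0$, so that the seminorm (resp.\ gradient) of the extension $\widehat{g}$ never sees mass beyond $\norme{g}_{W^{\alpha,p}(D+2\eps\square_0)}$. This is exactly why the enlarged domain $D+2\eps\square_0$ appears on the right-hand side, and it is what allows me to replace $\widehat{g}$ by $g$ at the end. The only other subtlety is justifying the pointwise difference identities — the translation-to-seminorm comparison for $\alpha<1$ and the fundamental-theorem-of-calculus identity for $\alpha=1$ — for genuinely non-smooth $W^{\alpha,p}$ functions; I would handle this by first proving the estimate for smooth $g$ and passing to the limit using the density of $\mathcal{C}^\infty$ in $W^{\alpha,p}$.
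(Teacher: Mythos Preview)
The paper does not give its own proof of this lemma: it is quoted verbatim from \cite[Lemma~6.7]{armstrong2019quantitative} and simply invoked. Your argument is correct and is essentially the standard one (and matches the approach in the cited reference): reduce $L^q$ to $L^p$ by H\"older, write the mollification error as an average of increments, and control the $L^p$ norm of increments either by the Gagliardo seminorm when $\alpha<1$ or by the gradient when $\alpha=1$, tracking that $\mathrm{supp}\,\xi_\eps\subset B_\eps(0)\subset 2\eps\square_0$ so that all translates remain in $D+2\eps\square_0$ where $\widehat{g}=g$.
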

Using this Lemma, with $g = \nabla u_0$, $q = 2$, and $p > 2$, we have
\begin{equation} \label{eq:H1err4}
\norme{\nabla u_0 - (\nabla \widehat{u_0}) * \xi_\eps}_{L^2(D)} \lesssim \eps^\alpha \norme{\nabla \widehat{u_0}}_{W^{\alpha, p}(D + 2 \eps \square_0)} \lesssim \eps^\alpha \norme{u_0}_{W^{1+\alpha, p}(D)}
\end{equation}
and with $g = u_0$ we obtain similarly
\begin{equation} \label{eq:H1err5}
\norme{u_0 - \widehat{u_0} * \xi_\eps}_{L^2(D)} \lesssim \eps^\alpha \norme{\widehat{u_0}}_{W^{\alpha, p}(D + 2 \eps \square_0)} \lesssim \eps^\alpha \norme{u_0}_{W^{\alpha, p}(D)}.
\end{equation}
Inserting \eqref{eq:H1err1}, \eqref{eq:H1err2}, \eqref{eq:H1err3}, \eqref{eq:H1err4}, \eqref{eq:H1err5} into \eqref{eq:H1err0} gives us \eqref{eq:2scBClessReg}, concluding the proof of Proposition~\ref{prop:2scErrDecayBC}.

\end{proof}

By estimating the $H^1$-norm of the boundary corrector $\widehat{v_\eps}$, we can now prove Proposition~\ref{prop:2scErrDecaylessRegular}.

\begin{proof}[Proof of Proposition~\ref{prop:2scErrDecaylessRegular}]
We consider $\widehat{V_\eps} := \widehat{v_\eps} - \eps \eta_\eps \widehat{u_{1, \eps}}$ the a.s. unique solution in $H^1(B_R)$ of 
\begin{equation} 
\left\{\begin{aligned} 
& -\Delta \widehat{V_\eps} - k^2 \widehat{V_\eps} = 0 &&\textrm{in}\, B_R\setminus\overline{D},\\
& -\nabla \cdot a_\eps \nabla \widehat{V_\eps} - k^2 n_\eps \widehat{V_\eps} = -\eps \nabla \cdot a_\eps \nabla(\eta_\eps \widehat{u_{1, \eps}}) + \eps k^2 n_\eps \eta_\eps \widehat{u_{1, \eps}}   && \textrm{in}\, D,\\
& \nabla \widehat{V_\eps}^- \cdot \nu - a_\eps \nabla \widehat{V_\eps}^+ \cdot \nu = \eps a_\eps \nabla( \eta_\eps \widehat{u_{1, \eps}}) \cdot \nu  + \eps \left(\nabla \cdot(\sigma_i^\eps \partial_i \widehat{u_0} * \xi_\eps)^+\right) \cdot \nu\\
& \hspace{5cm} - k^2 \eps (\beta^\eps \widehat{u_0} * \xi_\eps)^+ \cdot \nu   &&\textrm{on}\, \partial D, \\
& \nabla \widehat{V_\eps} \cdot \nu = \Lambda(\widehat{V_\eps}) && \textrm{on}\, \partial B_R. \\
\end{aligned}
\right.
\end{equation}
As in the proof of Proposition~\ref{prop:2scErrDecay}, we estimate $\norme{\widehat{V_\eps}}_{H^1(B_R)}$ by writing the variational formulation. For $w \in H^1(B_R)$,
\begin{equation}
\begin{split}
\int_{B_R} & a_\eps \nabla \widehat{V_\eps} \cdot \nabla \overline{w} - k^2 n_\eps \widehat{V_\eps} \overline{w} - \left \langle \Lambda(\widehat{V_\eps}), w \right\rangle_{H^{-\frac{1}{2}}(\partial B_R), H^{\frac{1}{2}}(\partial B_R) }  \\
& = \int_D - \eps a_\eps\nabla(\eta_\eps \widehat{u_{1,\eps}}) \cdot \nabla \overline{w} + \eps k^2 n_\eps(\eta_\eps \widehat{u_{1,\eps}}) - \eps \nabla \cdot (\sigma_i^\eps \eta_\eps \partial_i \widehat{u_0} * \xi_\eps) \cdot \nabla \overline{w} \\
& \hspace{0.5cm} - k^2 \eps \nabla \cdot (\beta^\eps \widehat{u_0} * \xi_\eps \eta_\eps) \overline{w} + k^2 \eps \beta^\eps \widehat{u_0} * \xi_\eps \eta_\eps \cdot \nabla \overline{w}.
\end{split}
\end{equation}
In particular, by the coercivity of the sesquilinear form, we get
\begin{equation}
\begin{split}
\norme{\widehat{v_\eps}}_{H^1(D)} + \norme{\widehat{v_\eps}}_{H^1(B_R \setminus \overline{D})} & \lesssim \eps \norme{\eta_\eps \widehat{u_{1,\eps}}}_{H^1(D)} + \eps \norme{\sum_{i=1}^d \nabla \cdot (\sigma_i^\eps \partial_i \widehat{u_0}*\xi_\eps \eta_\eps)}_{L^2(D)} + \\
& \hspace{1cm} \eps \norme{\nabla \cdot (\beta^\eps \widehat{u_0}*\xi_\eps \eta_\eps)}_{L^2(D)} + \eps \norme{\beta^\eps \widehat{u_0}*\xi_\eps \eta_\eps}_{L^2(D)} . 
\end{split}
\end{equation}
Let us now estimate $\norme{\eta_\eps \widehat{u_{1,\eps}}}_{H^1(D)} = \norme{\eta_\eps \phi_i^\eps \partial_i \widehat{u_0} * \xi_\eps}_{H^1(D)}$. The three other terms can then be estimated using similar arguments.
First
\begin{equation}
\begin{split}
\norme{\nabla (\eta_\eps \phi_i^\eps \partial_i \widehat{u_0} * \xi_\eps)}_{L^2(D)} & \lesssim \norme{(\nabla \eta_\eps) \phi_i^\eps \partial_i \widehat{u_0} * \xi_\eps + \eta_\eps \nabla (\phi_i^\eps \partial_i \widehat{u_0} * \xi_\eps)}_{L^2(D)} \\
& \hspace{-1.5cm}\lesssim \norme{\left(\frac{1}{\mu_d(\frac{1}{\eps})}|\nabla \widehat{u_0} * \xi_\eps| + \eps|\nabla (\nabla \widehat{u_0} * \xi_\eps)|\right) |\phi^\eps| + |\nabla \phi| |\nabla \widehat{u_0} * \xi_\eps|}_{L^2(\mathcal{S}_{\eta_\eps})}.
\end{split}
\end{equation}
Eq \eqref{eq:lemma68_1} combined with the bounds on the corrector implies
\begin{equation} \label{eq:tildeu11}
\begin{split}
\norme{\frac{1}{\mu_d(\frac{1}{\eps})}|\nabla \widehat{u_0} * \xi_\eps| |\phi^\eps| }_{L^2(\mathcal{S}_{\eta_\eps})} & \lesssim \eps^{\frac{p-2}{2p}} \mu_d(\frac{1}{\eps})^{\frac{p-2}{2p}-1} \widetilde{\chi_{\eps, p}^1} \norme{\nabla \widehat{u_0}}_{L^p(\mathcal{S}_{\eta_\eps} + 2 \eps \square_0)} \\
% &\lesssim \eps^{\frac{1}{2}} \mu_d(\frac{1}{\eps})^{\frac{1}{2}} \mu_d(\frac{1}{\eps})^{-1} \widetilde{\chi_{\eps, p}^1} \norme{\nabla u_0}_{W^{\alpha, p}(D)},
\end{split}
\end{equation}
and
\begin{equation} \label{eq:tildeu12}
\begin{split}
\biggl\Vert |\nabla \widehat{u_0} * \xi_\eps| |\nabla \phi^\eps| \biggr\Vert_{L^2(\mathcal{S}_{\eta_\eps})} & \lesssim \eps^{\frac{p-2}{2p}} \mu_d(\frac{1}{\eps})^{\frac{p-2}{2p}} \widetilde{\chi_{\eps, p}^2} \norme{\nabla \widehat{u_0}}_{L^p(\mathcal{S}_{\eta_\eps} + 2 \eps \square_0)} \\
% & \lesssim \eps^{\frac{1}{2}} \mu_d(\frac{1}{\eps})^{\frac{1}{2}} \widetilde{\chi_{\eps, p}^2} \norme{\nabla u_0}_{W^{\alpha, p}(D)}.
\end{split}
\end{equation}
where the random variables $\widetilde{\chi_{\eps, p}^1}$ and $\widetilde{\chi_{_\eps,p}^2}$ are defined as
\begin{equation} \label{eq:ChiEpsP}
\left\{\begin{aligned} 
& \widetilde{\chi_{\eps, p}^1} := \left(\frac{\eps^d}{\eps \mu_d(\frac{1}{\eps})} \sum_{z \in P_\eps(\mathcal{S}_{\eta_\eps})} \mathcal{C}(z)^{\frac{2p}{p-2}} \right)^{\frac{p-2}{2p}}, \\
& \widetilde{\chi_{\eps, p}^2} := \left(\frac{\eps^d}{\eps \mu_d(\frac{1}{\eps})} \sum_{z \in P_\eps(\mathcal{S}_{\eta_\eps})} (1+ r_*(z))^{d\frac{2p}{p-2}}\right)^{\frac{p-2}{2p}}.
\end{aligned}
\right.
\end{equation}
We use the following Lemma to estimate $\norme{\nabla \widehat{u_0}}_{L^p(\mathcal{S}_{\eta_\eps} + 2 \eps \square_0)}$.
\begin{lem}[{\cite[Lemma~6.12]{armstrong2019quantitative}}] \label{lem:lemma612}
Fix $p \in (1, \infty)$, $\alpha > \frac{1}{p}$, $q \in [1, p]$ and $\beta \in \biggl(0, \frac{1}{q}\biggr]$. For every $f \in W^{\alpha, p}(\R^d)$ and $r \in (0, 1]$, 
\begin{equation} \label{eq:Lp_Walphap}
\norme{f}_{L^q(\partial D + B_r)} \lesssim r^\beta \norme{f}_{W^{\alpha, p}(\R^d)}.
\end{equation} 

\end{lem}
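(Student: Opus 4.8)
The plan is to exploit that $\partial D + B_r = \{x : \mathrm{dist}(x,\partial D) < r\}$ is a thin tubular neighborhood of the compact hypersurface $\partial D$, of thickness $\sim r$, and to control $f$ on each of the parallel level surfaces by a uniform fractional trace estimate, then integrate across the tube.

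First I would set up normal coordinates. Since $\partial D$ is compact and (at least) $\mathcal{C}^{1,1}$, it has positive reach $r_0 > 0$, and for $r \le r_0$ the signed distance function $s(x) = \pm\,\mathrm{dist}(x,\partial D)$ is $\mathcal{C}^1$ away from $\partial D$, with the map $(y',s) \mapsto y' + s\,\nu(y')$ a bi-Lipschitz diffeomorphism from $\partial D \times (-r,r)$ onto $\partial D + B_r$ whose Jacobian is bounded above and below uniformly in $s$. The coarea formula then gives, for the level surfaces $\Gamma_s := \{x : s(x) = s\}$,
\[
\norme{f}_{L^q(\partial D + B_r)}^q \lesssim \int_{-r}^{r} \norme{f}_{L^q(\Gamma_s)}^q \, \mathrm{d}s,
\]
where each $\Gamma_s$ is a hypersurface whose surface measure and $\mathcal{C}^1$ geometry are controlled uniformly in $s \in (-r_0, r_0)$.

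Next I would invoke the fractional trace theorem. Because $\alpha > \frac{1}{p}$, restriction to a $\mathcal{C}^1$ hypersurface is a bounded operator $W^{\alpha,p}(\R^d) \to W^{\alpha - \frac{1}{p}, p}(\Gamma_s) \hookrightarrow L^p(\Gamma_s)$; localizing with a finite partition of unity that flattens $\partial D$ and using the half-space trace estimate, this bound is uniform in $s$ since the $\Gamma_s$ have uniformly bounded geometry, so $\norme{f}_{L^p(\Gamma_s)} \lesssim \norme{f}_{W^{\alpha,p}(\R^d)}$ for all $|s| < r_0$. As $q \le p$ and the surface measure satisfies $\sigma(\Gamma_s) \lesssim 1$, Hölder's inequality on $\Gamma_s$ upgrades this to $\norme{f}_{L^q(\Gamma_s)} \lesssim \norme{f}_{W^{\alpha,p}(\R^d)}$.

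Combining, for $r \le r_0$ I would integrate the uniform bound over $s \in (-r,r)$,
\[
\norme{f}_{L^q(\partial D + B_r)}^q \lesssim \int_{-r}^{r}\norme{f}_{W^{\alpha,p}(\R^d)}^q\, \mathrm{d}s \lesssim r\, \norme{f}_{W^{\alpha,p}(\R^d)}^q,
\]
so that $\norme{f}_{L^q(\partial D + B_r)} \lesssim r^{1/q}\norme{f}_{W^{\alpha,p}(\R^d)}$. Finally, since $0 < r \le 1$ and $\beta \le \frac{1}{q}$ one has $r^{1/q} \le r^\beta$, which yields \eqref{eq:Lp_Walphap}; the complementary range $r_0 < r \le 1$ is immediate, as then the shell lies in a fixed bounded set so $\norme{f}_{L^q} \lesssim \norme{f}_{W^{\alpha,p}}$ while $r^\beta \gtrsim r_0^\beta$ is bounded below. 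The main obstacle is the uniform-in-$s$ trace estimate: one must ensure the implied constant does not degenerate as $\Gamma_s$ approaches $\partial D$, which is precisely what the bounded geometry of the tubular neighborhood (the positive reach of the $\mathcal{C}^4$ boundary $\partial D$) guarantees.
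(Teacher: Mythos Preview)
Your argument is correct: the tubular-neighborhood decomposition via the coarea formula, combined with the uniform-in-$s$ fractional trace bound $W^{\alpha,p}(\R^d)\to L^p(\Gamma_s)$ (valid precisely because $\alpha>1/p$) and H\"older on each slice, cleanly yields the sharp exponent $r^{1/q}$, which is then weakened to $r^\beta$ for $\beta\le 1/q$.

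Note, however, that the paper does not supply its own proof of this lemma: it is quoted verbatim from \cite[Lemma~6.12]{armstrong2019quantitative} and used as a black box in the proofs of Propositions~\ref{prop:2scErrDecayL2} and~\ref{prop:2scErrDecaylessRegular}. Your approach is in fact the standard one and is essentially what is carried out in that reference, so there is nothing to contrast here.
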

\noindent Applying \eqref{eq:Lp_Walphap} with $f = \nabla \widehat{u_0}$, $r = 4 \eps \mu_d(\frac{1}{\eps})$, $q = p$, $\alpha > \frac{1}{p}$, $\beta = \frac{1}{q} = \frac{1}{p}$ yields
\begin{equation} \label{eq:tildeu13}
\norme{\nabla \widehat{u_0}}_{L^p(\mathcal{S}_{\eta_\eps} + 2 \eps \square_0)}  \lesssim \eps^{\frac{1}{p}} \mu_d(\frac{1}{\eps})^{\frac{1}{p}}\norme{\nabla u_0}_{W^{\alpha,p}(D)}.
\end{equation}
Furthermore, using \eqref{eq:lemma68_2}, we get
\begin{equation} \label{eq:tildeu14}
\begin{split}
\biggl\Vert \eps|\nabla (\nabla \widehat{u_0} * \xi_\eps)| |\phi^\eps| \biggr\Vert_{L^2(\mathcal{S}_{\eta_\eps})} & \lesssim \eps^\alpha \mu_d(\frac{1}{\eps}) \eps^{\frac{p-2}{2p}} \mu_d(\frac{1}{\eps})^{\frac{p-2}{2p}} \widetilde{\chi_{\eps, p}^1} \norme{\nabla u_0}_{W^{\alpha, p}(\mathcal{S}_{\eta_\eps} + 2 \eps \square_0)}\\
& \lesssim \eps^{\frac{1}{2}} \mu_d(\frac{1}{\eps})^{\frac{1}{2}} \eps^{\alpha - \frac{1}{p}} \mu_d(\frac{1}{\eps})^{1-\frac{1}{p}} \widetilde{\chi_{\eps, p}^1} \norme{\nabla u_0}_{W^{\alpha, p}(D)}.
\end{split}
\end{equation}
Combining the last estimates \eqref{eq:tildeu11}, \eqref{eq:tildeu12}, \eqref{eq:tildeu13} and \eqref{eq:tildeu14}, and the fact that $\alpha > \frac{1}{p}$, one has that
\begin{equation} \label{eq:tildeu1H1}
\begin{split}
\norme{\nabla (\eta_\eps \phi_i^\eps \right.&\left.\partial_i \widehat{u_0} * \xi_\eps)}_{L^2(D)}\\
& \lesssim \eps^{\frac{1}{2}}\mu_d(\frac{1}{\eps})^{\frac{1}{2}}\left(\mu_d(\frac{1}{\eps})^{-1}\chi_{\eps, p}^1 + \chi_{\eps, p}^2 + \eps^{\alpha - \frac{1}{p}} \mu_d(\frac{1}{\eps})^{1-\frac{1}{p}} \chi_{\eps, p}^1 \right) \norme{\nabla u_0}_{W^{\alpha, p}(D)} \\
& \lesssim \eps^{\frac{1}{2}}\mu_d(\frac{1}{\eps})^{\frac{1}{2}} \widetilde{\chi_{\eps, p}^3} \norme{u_0}_{W^{1+ \alpha, p}(D)},
\end{split}
\end{equation}
where the random variable $\widetilde{\chi_{\eps, p}^3}$ is defined as
$$\widetilde{\chi_{\eps, p}^3} := \mu_d(\frac{1}{\eps})^{-1}\widetilde{\chi_{\eps, p}^1} + \widetilde{\chi_{\eps, p}^2} + \eps^{\alpha - \frac{1}{p}} \mu_d(\frac{1}{\eps})^{1-\frac{1}{p}} \widetilde{\chi_{\eps, p}^1},$$
and satisfies the stochastic integrability \eqref{eq:Exp_corrector_bounds}.\\
We finally proved that
\begin{equation}
\norme{\widetilde{v_1}}_{H^1(D)} + \norme{\widetilde{v_1}}_{H^1(B_R \setminus\overline{D})} \lesssim \eps^{\frac{1}{2}}\mu_d(\frac{1}{\eps})^{\frac{1}{2}} \widetilde{\chi_{\eps, p}^3} \norme{u_0}_{W^{1+ \alpha, p}(D)}.
\end{equation}

\end{proof}

\section{Proof of Lemma~\ref{lem:CSestimates}} \label{sec:lemCSestimates}

%This section is devoted to the proof of Lemma~\ref{lem:CSestimates}.

\begin{proof}[Proof of Lemma~\ref{lem:CSestimates}]
(a) First note that for $y \in \R^d$,
$$\int_{\R^d} \mathbbm{1}_{B_t(x)}(y) \mathrm{d}x = \int_{\R^d} \mathbbm{1}_{B_t(y)}(x) \mathrm{d}x = C t^d, $$
where $C$ depends only on $d$.\\
We have also for $y,z \in \R^d$,
$$\int_{\R^d} \mathbbm{1}_{B_t(x)}(y) \mathbbm{1}_{B_t(x)}(z) \mathrm{d}x \leq \int_{\R^d} \mathbbm{1}_{B_t(x)}(y) \mathrm{d}x = C t^d. $$

Let $U \in L^1(D)$ and $t >0$. By Fubini's theorem,
\begin{equation}
\begin{split}
\int_{\R^d} \left(\int_{B_t(x) \cap D} |U| \right) \mathrm{d}x & = 
\int_{\R^d} \int_D |U(y)| \mathbbm{1}_{B_t(x)}(y) \mathrm{d}y \mathrm{d}x\\
& = \int_D |U(y)| \left(\int_{\R^d} \mathbbm{1}_{B_t(x)}(y) \mathrm{d}x \right) \mathrm{d}y \\ 
& \leq C t^d \int_D |U(y)| \mathrm{d}y \\
& \leq C t^d \left(\int_D |U|\right).
\end{split}
\end{equation}

(b) Similarly,
\begin{equation}
\begin{split}
\int_{\R^d} \rho_T(x)^{\alpha} & \left(\int_{B_t(x) \cap D} |U| \right)^2 \mathrm{d}x \\
& = \int_{\R^d} \rho_T(x)^\alpha \left(\int_D |U(y)| \mathbbm{1}_{B_t(x)}(y) \mathrm{d}y\right) \left(\int_D |U(z)| \mathbbm{1}_{B_t(x)}(z) \mathrm{d}z \right)  \mathrm{d}x\\
& = \int_D |U(y)| \int_D |U(z)| \left(\int_{\R^d} \rho_T(x)^{\alpha} \mathbbm{1}_{B_t(x)}(y) \mathbbm{1}_{B_t(x)}(z)  \mathrm{d}x \right) \mathrm{d}y \mathrm{d}z. \\
\end{split}
\end{equation}

Since $\mathbbm{1}_{B_t(x)}(y) \mathbbm{1}_{B_t(x)}(z) = 0$ if $|x - y| > t$ or $|x - z|>t$, one can bound $\rho_T(x)^\alpha$ in the third integral by $\sup_{y \in D}\biggl(\frac{t + y}{T} + 1\biggr)^{\alpha}$ yielding the result.

\end{proof}

\end{appendices}

\end{document}